\definecolor{dkblue}{RGB}{1,31,91} 
\newcommand{\eqdef }{\overset{\mbox{\tiny{def}}}{=}}
\newcommand{\eps}{\varepsilon}
\newcommand{\rth}{{\mathbb{R}^3}}
\newcommand{\stw}{{\mathbb{S}^2}}
\newcommand{\vphi}{\varphi}
\newcommand{\tI}{\tilde{I}}
\newcommand{\bnd}{\partial\Omega}
\newcommand{\Div}{\text{div}}
\theoremstyle{definition}
\newtheorem{theorem}{Theorem}
\newtheorem{corollary}[theorem]{Corollary}
\newtheorem{lemma}[theorem]{Lemma}
\newtheorem{proposition}[theorem]{Proposition}
\newtheorem{remark}[theorem]{Remark}
\numberwithin{equation}{section}
\numberwithin{theorem}{section}
\numberwithin{definition}{section}
\begin{document}

\keywords{Radiative transfer equation, stationary solutions, local thermodynamic equilibrium, non-local elliptic equations}
\subjclass[2010]{Primary  35Q31,	85A25,  76N10, 35R25, 35A02  }

\title[Compactness for a general class of radiative transfer equations]{Compactness and existence theory for a general class of stationary radiative transfer equations}

\author[E. Dematt\`e]{Elena Dematt\`e}
\address{Institute for Applied Mathematics, University of Bonn, 53115 Bonn, Germany. \href{mailto:	dematte@iam.uni-bonn.de}{dematte@iam.uni-bonn.de} }

\author[J. W. Jang]{Jin Woo Jang}
\address{Department of Mathematics, Pohang University of Science and Technology (POSTECH), Pohang, South Korea (37673). \href{mailto:jangjw@postech.ac.kr}{jangjw@postech.ac.kr} }

\author[J. J. L. Vel\'azquez]{Juan J. L. Vel\'azquez}
\address{Institute for Applied Mathematics, University of Bonn, 53115 Bonn, Germany. \href{mailto:velazquez@iam.uni-bonn.de}{velazquez@iam.uni-bonn.de} }

\begin{abstract}In this paper, we study the steady-states of a large class of stationary radiative transfer equations in a $C^1$ convex bounded domain. Namely, we consider the case in which both absorption-emission and scattering coefficients depend on the local temperature $T$ and the radiation frequency $\nu.$ The radiative transfer equation determines the temperature of the material at each point. The main difficulty in proving existence of solutions is to obtain compactness of the sequence of integrals along lines that appear in several exponential terms. We prove a new compactness result suitable to deal with such a non-local operator containing integrals on a line segment. 
On the other hand, to obtain the existence theory of the full equation with both absorption and scattering terms we combine the compactness result with the construction of suitable Green functions for a class of non-local equations.
\end{abstract}
\thispagestyle{empty}

\maketitle 
\tableofcontents

\section{Introduction}
In this paper, we study the \textit{stationary radiative transfer equation} for a radiation intensity function $I_\nu: \Omega\times \rth \to [0,\infty)$ on a $C^1$ convex and bounded domain $\Omega \subset \rth,$ which takes the form
\begin{equation}\label{General rad eq}  
n\cdot \nabla_x I_\nu = \alpha^a_\nu(T) (B_\nu(T)-I_\nu )+ \alpha^s_\nu(T)\left[\int_{\mathbb{S}^2} K(n',n)I_\nu(x,n')dn'-I_\nu\right], 
\end{equation}
where it factors in both the excitation and de-excitation processes of gas molecules alongside photon scattering. This formulation is underpinned by the presumption of local thermodynamic equilibrium (LTE) of gas molecules. Specifically, the components $\alpha^a_\nu I_\nu $, $ \alpha^s_\nu I_\nu,\alpha^a_\nu B_\nu, \alpha^s_\nu \int_{\mathbb{S}^2} K(n',n)I_\nu(x,n')dn'$ represent absorption, scattering loss, emission from gas deexcitation, and scattering gain, respectively. Herein,
 \begin{equation}
       \label{blackbody}
      B_\nu= B_\nu(T)\eqdef\frac{2h\nu^3}{c^2}\frac{1}{e^{\frac{h\nu}{kT}}-1}
   \end{equation}
   symbolizes the Planck emission from a black body, while $I_\nu=I_\nu(x,n)$ signifies the radiation intensity at frequency $\nu$, located at position $x\in\Omega \subset \rth$ and oriented in direction $n\in \mathbb{S}^2$.

    Note that $B_\nu(T)$ is monotonically increasing in $T$ for each $\nu$ and $B_\nu(T)=0$ is equivalent to $T=0.$ 
   By making a change of variables $\nu\mapsto \zeta \eqdef\frac{h\nu}{kT} $, we obtain that 
\begin{multline}\label{integration of blackbody}
       \int_0^\infty B_\nu(T)d\nu = \int_0^\infty\frac{2h\nu^3}{c^2}\frac{1}{e^{\frac{h\nu}{kT}}-1}d\nu=\int_0^\infty\frac{2hk^3T^3}{h^3c^2}\frac{\zeta^3}{e^{\zeta}-1}\frac{kT}{h}d\zeta\\=\frac{\pi^4}{15}\frac{2hk^3T^3}{h^3c^2}\frac{kT}{h}=\sigma T^4,
   \end{multline}
   where we define $
       \sigma\eqdef \frac{2\pi^4k^4}{15h^3c^2}.
 $
 
The radiation energy flux at frequency $\nu$ can be articulated as:
$$\mathcal{F}_\nu=\mathcal{F}_\nu(x)=\int_{\mathbb{S}^2}nI_\nu(x,n)dn \in \rth.$$
The scattering kernel of the ``non-local" gain term of scattering has the property \begin{equation}\label{scattering assume}\int_{\mathbb{S}^2} K(n',n)dn=1.\end{equation} If the scattering is isotropic then it becomes simply $\alpha^s_\nu(T)I_\nu$ in \eqref{General rad eq}. 
The class of models is for the LTE situation. The temperature $T$ is well-defined at each point, and each coefficient $\alpha_\nu=\alpha_\nu(T)$ depends on the frequency of radiation $\nu$ and the local temperature $T$. The coefficient $\alpha_\nu$ can be considered as the spectral lines for each $\nu$ or, more generally, the averages of these processes.

Throughout the paper, we will study the existence theory of the general model \eqref{General rad eq} with  \eqref{scattering assume}. The assumption \eqref{scattering assume} implies that the scattering does not modify the frequency. We will consider the general case where the absorption-emission and the scattering coefficients can depend not just on the radiation frequency $\nu>0$ but also on the local temperature $T(x)$.   Another main assumption in this model above is that the non-elastic mechanisms yielding LTE in the gas molecules' distributions are extremely fast, and therefore the scattering cannot modify much the Boltzmann ratio between the different energy levels at each point. For more details, see \cite{Dematte2023,2109.10071}.
We write the total flux of radiation energy with frequency $\nu$ at $x$ as $$\mathcal{F}=\mathcal{F}(x)\eqdef \int_0^\infty d\nu \int_{\mathbb{S}^2}nI_\nu(x,n)dn.$$
At the Planck equilibrium, we have 
\begin{equation}\label{planck} I_\nu(x,n)=B_\nu(T)=\frac{2h\nu^3}{c^2}\frac{1}{e^{\frac{h\nu}{kT}}-1}.\end{equation} Then, the stationarity of the temperature at each point requires that the divergence of the total flux of radiation energy vanishes (cf. \cite{mihalas2013foundations,oxenius}); i.e.,
\begin{equation}\label{heat equation}\nabla_x\cdot \mathcal{F}(x)=0,\text{ at any }x\in\Omega.\end{equation}Hence, we will examine throughout this paper whether the temperature $T$ at each point can be determined uniquely by \eqref{General rad eq}  
and a suitable boundary condition for the radiation at $\partial \Omega$, if we impose the divergence-free total-flux condition \eqref{heat equation}; see also \cite{2109.10071} for the conservation law. 
The simplest boundary condition that one can impose is
\begin{equation}\label{incoming boundary}I_\nu(x,n)= g_\nu(n)\ge 0\ \text{ for } x\in\partial\Omega\text{ and }n\cdot n_x<0,\end{equation} where $n_x$ is the outward normal vector at the boundary point $x\in \partial \Omega$ and $\Omega $ is  with smooth boundary $\partial\Omega$.

 Throughout the paper, we assume $\Omega$ to be a convex domain with $C^1$-boundary and strictly positive curvature.    

The problem \eqref{General rad eq}, \eqref{heat equation}, and \eqref{incoming boundary} is considered in \cite{jang2023temperature} in the case where $\alpha_\nu^a$ and $\alpha_\nu^s$ are independent of the temperature. The main novelty of this paper is that we were able to extend the proof of the existence of solutions to \eqref{General rad eq}, \eqref{heat equation}, and \eqref{incoming boundary} for a general class of coefficients $\alpha_\nu^a$ and $\alpha_\nu^s$ which also depend on the temperature $T$. To obtain this result we will derive a compactness result for a large class of non-local operators including terms with the form  \begin{equation}\label{nlo}
    T(\cdot)\mapsto \exp{\left(-\int_{[x,\eta]}\beta(T(\xi))ds(\xi)\right)},\;x,\eta\in\Omega,
\end{equation}
where the integral is along the segment connecting $x$ to $\eta$. The compactness result that we obtain in this paper to prove the existence has some analogies with the classical averaging lemmas that have been extensively used in kinetic theory \cite{GOLSE1988110,refId0,Westdickenberg,DIPERNA1991271,DeVorePetrova}. Nevertheless, the currently available averaging lemmas including the treatment of line integrals in \cite{NouriArkeryd} do not seem to provide the compactness that we require. For this reason, we prove a new compactness result more suitable to deal with the non-local operator on a line segment with the form \eqref{nlo}.

We now introduce some related works in the literature.
\subsection{Summary of literature}The study of the distribution of the temperature within a body where the transfer of heat by means of radiation plays an important role has been extensively studied. Seminal works by Compton and Milne \cite{Compton,Milne} laid the foundation for understanding the interaction between radiation and gases. Subsequent papers by Holstein and Kenty provided further insights \cite{Holstein,PhysRev.42.823}. Specifically, Holstein highlighted the necessity to approach heat transfer by radiation as a non-local issue. The study of the evolution of temperature over time in a bar where the heat transfer is strictly due to radiation was considered by Spiegel \cite{spiegel1957smoothing}. Detailed reviews on the physics of radiative transfer can be found in works by Mihalas, Oxenius and Rutten\cite{oxenius, rutten1995radiative, mihalas2013foundations}.

In recent times the mathematical properties of the radiative transfer equation have been examined in \cite{Golse-Bardos,B1,B2,B3}. In several of these papers the authors studied the well-posedness of the time-dependent problem, usually using semi-group theory or the theory of $m$-accretive operators.

Another question that has been considered by several authors is the so-called Milne problem (cf. \cite{clouet2009milne,golse1987milne,MR0906921}). The Milne problem consists of describing the distribution of temperature in half space, a question which is motivated by the study of the distribution of the temperature near the boundary in the diffusion approximation limit. In this setting, the equation reduces to a one-dimensional problem.

Equations describing the distribution of temperature for bodies where the heat is transmitted by means of radiation and conduction have been considered by numerous authors with different boundary conditions, see for instance \cite{Nouri2,MR2478911,MR1866555,golse2008radiative,MR1234995,mercier1987application,MR1608072,MR2076780,MR3797032,MR3412332,MR1471600,ghattassi2023diffusive1, ghattassi2023diffusive2,ghattassi2023stability,ghattassi2022diffusive}. We refer to \cite{jang2023temperature} for more details. Moreover, other papers such as \cite{Nouri1,Dematte2023,RSM,2109.10071} consider the radiative transfer equation coupled to the Boltzmann equation.

Equations similar to \eqref{General rad eq} with the absorption-emission coefficient $\alpha^a_\nu=0$, focusing solely on scattering, are widely examined in mathematical studies, especially about neutron diffusion as seen in references \cite{MR533346,MR3324149,MR3686005}. Similar equations appear also in the theory of Lorentz gases, cf. \cite{MR3356368,MR3324151,BBS,G,LS,Spohn1978}. 

A recent paper by Arkeryd and Nouri \cite{NouriArkeryd} that considers the existence of mild solutions to normal discrete velocity Boltzmann equations with given incoming boundary values also requires a compactness theorem for line integrals having some analogies with the one derived in this paper. 

We want to emphasize that although the time-dependent problem has been considered in various papers, the existence of a time-dependent solution, even a globally bounded one, does not imply the existence of a solution of the stationary problem.

\subsection{Main theorems} 
In this paper, we consider the boundary value problem given by the system of equations \eqref{General rad eq}, \eqref{heat equation} and \eqref{incoming boundary}. We will consider two types of absorption coefficients and scattering coefficients. In the first case, the coefficients satisfy the so-called Grey approximation where $\alpha_\nu^a(T)=\alpha^a(T)$ and $\alpha_\nu^s(T)=\alpha^s(T)$ are independent of the frequency $\nu$. We will also consider a particular choice of $\alpha$'s in the non-Grey case, namely where $\alpha_\nu^a$ and $\alpha_\nu^s$ can be written as the product of functions in $\nu$ and $T$, separately. We denote this case from now on as ``pseudo Grey". A similar choice can be found in \cite{golse1987milne}. First, we study the case of pure emission and absorption where $\alpha^s_\nu=0$ and we will show the existence of a solution to this problem as stated in the following theorem. In the following theorem and throughout the rest of the paper, we denote $I_\nu \in L^\infty(\Omega,L^\infty(\mathbb{S}^2,L^1(\mathbb{R}_+)))$ when 
$$\sup_{x\in\Omega}\sup_{n\in\mathbb{S}^2}\int_0^\infty I_\nu(x,n)d\nu<\infty.$$
\begin{theorem}\label{main theorem pseudo grey}
Let $\Omega\subset \rth$ be bounded and open with $C^1$-boundary and strictly positive curvature. Suppose that the incoming boundary profile $g_\nu$ satisfies the bound $$\sup_{n\in\stw}\int_0^\infty d\nu\ g_\nu(n)<\infty,$$ and that $\alpha_\nu^a(T(x))=Q(\nu)\alpha(T(x))$ is bounded, strictly positive and $C^1$ in $T$, where $Q:\mathbb{R}_+\to\mathbb{R}_+$ and $\alpha:\mathbb{R}_+\to\mathbb{R}_+$. 
Then there exists a solution $ (T,I_\nu) \in L^\infty(\Omega)\times L^\infty\left(\Omega, L^\infty\left(\stw,L^1(\mathbb{R}_+)\right)\right)$,  which solves the boundary value problem given by \eqref{General rad eq}, \eqref{heat equation} and \eqref{incoming boundary} for $\alpha^s_\nu=0$, namely
\begin{equation}\label{rtenoscattering}
    \begin{cases}
        &n\cdot \nabla_x I_\nu(x,n) =Q(\nu)\alpha(T(x))\left( B_\nu(T(x) -I_\nu(x,n)\right),  \text{ for }x\in\Omega,\ n\in\stw,\\
        &\nabla_x\cdot \mathcal{F}(x)=0,\text{ at any }x\in\Omega,\\
        &I_\nu(x,n)= g_\nu(n)\ge 0,\ \text{ for } x\in\partial\Omega\text{ and }n\cdot n_x<0.
    \end{cases}
\end{equation}
Here, $I_\nu$ is a solution to \eqref{General rad eq} in the sense of distribution.
\end{theorem}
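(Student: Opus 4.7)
The plan is to reduce the full boundary-value problem to a fixed-point equation for the temperature $T$ alone, and to apply a Schauder-type argument in which the compactness step is supplied by the new result announced in the introduction. For a candidate $T\in L^\infty(\Omega)$ with $0\le T\le M$, the first equation in \eqref{rtenoscattering} is a linear transport equation for $I_\nu$ with $T$ frozen, so I would solve it explicitly along characteristics. Writing $s_\ast(x,n)$ for the backward exit time from $\Omega$ in direction $-n$, the unique mild solution matching the inflow profile is
\begin{equation*}
\mathcal{I}_\nu[T](x,n)=g_\nu(n)\,e^{-Q(\nu)\int_0^{s_\ast}\alpha(T(x-tn))\,dt}+\int_0^{s_\ast}Q(\nu)\alpha(T(x-sn))B_\nu(T(x-sn))\,e^{-Q(\nu)\int_0^s\alpha(T(x-tn))\,dt}\,ds.
\end{equation*}
This exhibits $I_\nu$ as an explicit (non-local) functional of $T$.

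Next I would convert the condition $\nabla_x\cdot\mathcal F=0$ into an implicit pointwise equation for $T$. Integrating the first equation of \eqref{rtenoscattering} over $n\in\stw$ and $\nu>0$ turns the left-hand side into $\nabla_x\cdot\mathcal F$, which vanishes, yielding
\begin{equation*}
\alpha(T(x))\int_0^\infty Q(\nu)\!\left[4\pi B_\nu(T(x))-\int_{\stw}\mathcal{I}_\nu[T](x,n)\,dn\right]d\nu=0.
\end{equation*}
Since $\alpha>0$ and $G(\tau):=4\pi\int_0^\infty Q(\nu)B_\nu(\tau)\,d\nu$ is continuous and strictly increasing from $0$ to $\infty$ (by monotonicity of $B_\nu$ and positivity of $Q$), this defines a nonlinear operator
\begin{equation*}
\Phi(T)(x):=G^{-1}\!\left(\int_0^\infty Q(\nu)\int_{\stw}\mathcal{I}_\nu[T](x,n)\,dn\,d\nu\right),
\end{equation*}
whose fixed points correspond exactly to solutions of \eqref{rtenoscattering}.

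A maximum-principle argument on the characteristic formula (the integrand is a convex combination along the ray of $g_\nu(n)$ and $B_\nu(T(\cdot))$), combined with $\int_0^\infty B_\nu(\tau)\,d\nu=\sigma\tau^4$ and the hypothesis $\sup_n\int_0^\infty g_\nu(n)\,d\nu<\infty$, produces a constant $M$ depending only on $g$ and $Q$ such that $\Phi$ sends the convex set $K:=\{T\in L^\infty(\Omega):0\le T\le M\}$ into itself, and such that $\mathcal{I}_\nu[T]$ satisfies the desired $L^\infty(\Omega,L^\infty(\stw,L^1(\mathbb{R}_+)))$ bound uniformly on $K$.

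The main obstacle is to establish that $\Phi$ is continuous and compact on $K$ in a topology strong enough for Schauder's theorem; I would use the $L^1(\Omega)$ topology. The difficulty is that $\Phi(T)$ depends on $T$ non-locally through the exponentials $\exp(-Q(\nu)\int_0^s\alpha(T(x-tn))\,dt)$ of the form \eqref{nlo}, and weak-$\ast$ convergence $T_k\rightharpoonup T$ only controls these line integrals along almost every line, which is insufficient for passing to the limit in the $x$-integrals defining $\Phi$. This is precisely where the new compactness theorem announced in the introduction intervenes: applied to a sequence $\{T_k\}\subset K$, it upgrades $L^1(\Omega)$ (or a.e.) convergence of $T_k$ into strong $L^1(\Omega\times\stw)$ convergence of the line-integral exponentials. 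Combined with dominated convergence in $\nu$ (dominated by $\|Q\|_\infty(\sup_n g_\nu(n)+B_\nu(M))$) and the local Lipschitz behaviour of $G^{-1}$ on bounded subsets of $[0,\infty)$, this yields continuity of $\Phi$ in $L^1(\Omega)$ and relative compactness of $\Phi(K)$. Schauder's fixed point theorem then produces a $T\in K$ with $\Phi(T)=T$, and $I_\nu:=\mathcal{I}_\nu[T]$ is the sought distributional solution of \eqref{rtenoscattering}.
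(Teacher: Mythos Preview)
Your overall reduction to a fixed-point equation for the temperature via the characteristic formula and the identity $\nabla_x\cdot\mathcal{F}=0$ matches the paper's starting point. However, from that point on the paper proceeds quite differently, and a couple of the shortcuts you take are not innocuous.

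\textbf{Where the paper's route diverges.} The paper does \emph{not} try to apply Schauder directly to the unregularized operator. Instead it (i) rewrites the fixed-point equation in the variable $u(x)=4\pi\int_0^\infty Q(\nu)B_\nu(T(x))\,d\nu=F(T(x))$ rather than in $T$, obtaining \eqref{nablaFreducedu1}; (ii) mollifies the absorption coefficient inside the line integrals to obtain a family of regularized operators $\mathcal{B}_\eps$; (iii) proves, by elementary H\"older estimates, that each $\mathcal{B}_\eps$ maps $C(\Omega)$ into $C^{1/2}(\Omega)$, so Schauder applies trivially and yields $u_\eps$; and only then (iv) invokes the line-integral compactness machinery (Proposition~\ref{compact} and its refinement Lemma~\ref{general compact}) to prove that the \emph{sequence} $(u_\eps)_\eps$ is precompact in $L^2(\Omega)$ and to pass to the limit $\eps\to0$. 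So the compactness theorem enters after Schauder, not inside it.

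\textbf{Two concrete issues with your proposal.} First, writing the fixed-point map through $G^{-1}$ is risky: since $G(\tau)\sim c\,\tau^4$ near $\tau=0$, the inverse $G^{-1}$ is only H\"older (exponent $1/4$) near $0$, and there is nothing in your a priori bounds that keeps $\int Q(\nu)\int_{\stw}\mathcal I_\nu[T]\,dn\,d\nu$ uniformly away from zero. The paper sidesteps this by taking $u=F(T)$ as the unknown. Second, and more importantly, your description of what Proposition~\ref{compact} does is not accurate: it does not ``upgrade $L^1$ (or a.e.) convergence of $T_k$ to strong convergence of the exponentials''. What it actually gives is equi-integrability in $L^2$ of the angular averages of line integrals for any uniformly bounded sequence --- with no convergence hypothesis on $T_k$ at all. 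To turn this into relative compactness of $\Phi(K)$ one still has to expand the exponentials as power series, control each power via Lemma~\ref{general compact} (line integrals of variable length), handle the boundary contribution through the function $s(x,n)$ using the curvature of $\partial\Omega$, and reassemble; this is the content of the detailed estimate \eqref{estimate2} in the paper. Continuity of $\Phi$ in $L^1$, on the other hand, follows by Fubini and dominated convergence once $T_k\to T$ a.e., and does not require the new compactness result.

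In short: your direct Schauder-in-$L^1$ strategy is plausible in spirit, but to make it rigorous you would need essentially all of the paper's compactness analysis for $\overline{\Phi(K)}$, and you should formulate the fixed point in $u$ rather than $T$. The paper's regularize-then-limit scheme trades a harder Schauder step for an easy one, deferring all the difficulty to a single compactness argument on the sequence $u_\eps$.
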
We will prove Theorem \ref{main theorem pseudo grey} using a fixed-point argument. As we will see, the main difficulty that arises in our proof is to show the compactness of the terms involving the exponential function of a line integral. As first step we will regularize the line integral in order to obtain a problem where it is possible to prove existence using the Schauder fixed-point theorem. We will then show the compactness of the solutions of such regularized problems uniformly in the regularizing parameter. To this end, we provide the following type of general $L^2$ compactness result for sequences of non-linear operators of line integrals based on the study of auxiliary measures on $\stw$.
\begin{proposition}[Compactness result for line integrals]\label{compact}
    Let $\Pi^3=[-L,L]^3$ and $ (\vphi_j)_{j\in\mathbb{N}}\in 
    L^\infty\left(\Pi^3\right) $
     be a sequence of periodic functions with $ \Arrowvert \vphi_j\Arrowvert_\infty\leq M $. For $ n\in\stw $ and $m\in\mathbb{N}$ we define the operators $ L_n $ and $T_m$ by
  \begin{equation*}
  L_n[\vphi](x)\eqdef \int_{-L}^{L}d\lambda\ \vphi(x-\lambda n) \;\;\;\text{and}\;\;\;T_m[\vphi](x)\eqdef \int_\stw dn \left(L_n[\vphi](x)\right)^m.
  \end{equation*}
  Then for every $ m\in\mathbb{N} $ the sequence $ \left(T_m[\vphi_j]\right)_j $ is compact in $ L^2\left(\Pi^3\right) $. More precisely, the sequence $T_m[\vphi_j]$ satisfies the following equi-integrability condition: For any $\eps>0$ there exists some $h_0>0$ such that \begin{multline}\label{actual.equiintegrability.torus}
    \int_{\Pi^3}dx\;\left|T_m[\vphi_j](x)-T_m[\vphi_j](x+h)\right|^2 \\\leq C_m \int_{\Pi^3}dx \int_\stw dn\;\left| L_n[\vphi_j](x)-L_n[\vphi_j](x+h)\right|^2<\eps 
  \end{multline}
  for all $j\in\mathbb{N}$ and all $|h|<h_0$. The constant $C_m>0$ depends only on $m\in\mathbb{N}$, $M$ and $L$.
\end{proposition}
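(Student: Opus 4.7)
The plan is to prove the two inequalities in \eqref{actual.equiintegrability.torus} separately and then invoke the Fr\'echet--Kolmogorov compactness theorem. For the first inequality, I would use the telescoping identity $a^m - b^m = (a-b)\sum_{i=0}^{m-1} a^i b^{m-1-i}$. Since $\|\vphi_j\|_\infty \leq M$ implies the uniform bound $|L_n[\vphi_j](x)| \leq 2LM$, the pointwise estimate $|a^m - b^m| \leq m (2LM)^{m-1} |a-b|$ combined with the Cauchy--Schwarz inequality on $\stw$ (which has finite measure $4\pi$) yields, after integrating in $x$, the first inequality with $C_m$ depending only on $m$, $M$, and $L$.

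The crucial step is the equi-continuity estimate for the spherical-averaged line integrals. I would proceed via Fourier analysis on the torus: expanding $\vphi_j(x) = \sum_{k \in \mathbb{Z}^3} c_{j,k} e^{i\pi k \cdot x/L}$, one computes termwise
$$L_n[\vphi_j](x) = \sum_k c_{j,k}\, \mu_{n,k}\, e^{i\pi k \cdot x/L}, \qquad \mu_{n,k} = \frac{2L \sin(\pi k \cdot n)}{\pi k \cdot n},$$
so that by Plancherel
$$\int_{\Pi^3}\int_\stw |L_n[\vphi_j](x) - L_n[\vphi_j](x+h)|^2 \,dn\, dx = (2L)^3 \sum_k |c_{j,k}|^2 A(k)\, |1 - e^{i\pi k\cdot h/L}|^2,$$
where $A(k) := \int_\stw |\mu_{n,k}|^2\, dn$. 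The essential smoothing estimate is $A(k) \leq C L^2/(1+|k|)$: in spherical coordinates with polar axis along $k$, the angular integral reduces to $\int_{-1}^1 \bigl(\sin(\pi|k|u)/(\pi|k|u)\bigr)^2 du$, which after the substitution $v = \pi|k|u$ yields the $1/|k|$ decay.

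Given this smoothing estimate, I would split the Fourier sum at a cutoff $N$. Using $\sum_k |c_{j,k}|^2 \leq M^2$ (Plancherel plus the uniform $L^\infty$ bound on $\vphi_j$), the high-frequency tail is bounded by $C L^5 M^2/N$ via $|1 - e^{i\pi k\cdot h/L}|^2 \leq 4$ and the $1/|k|$-decay of $A(k)$, while the low-frequency part is bounded by $C L^3 M^2 N^2 |h|^2$ via $|1 - e^{i\pi k \cdot h/L}| \leq \pi |k| |h|/L$ for $|k| \leq N$. Choosing $N$ large and then $|h|$ small gives an $\varepsilon$-bound uniform in $j$. Combined with the uniform bound $|T_m[\vphi_j]| \leq 4\pi (2LM)^m$, the Fr\'echet--Kolmogorov theorem then yields compactness of $(T_m[\vphi_j])_j$ in $L^2(\Pi^3)$.

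The main obstacle is the smoothing estimate $A(k) \lesssim 1/|k|$. For a fixed direction $n$, the line-integral operator $L_n$ smooths only along $n$ and yields no compactness transverse to $n$; the compactness comes entirely from the averaging over $\stw$, which converts the oscillatory sinc factor into a genuine polynomial gain in $|k|$. This spherical cancellation is the technical heart of the argument and the reason the proposition fails without integrating over directions; the authors' announced route via auxiliary measures on $\stw$ is presumably a coordinate-free way to capture the same effect.
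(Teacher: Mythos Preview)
Your proof is correct and follows the same overall Fourier-analytic architecture as the paper: expand $\vphi_j$ in Fourier series, compute the sinc multiplier for $L_n$, use Plancherel, split at a frequency cutoff, and conclude via Fr\'echet--Kolmogorov. The first inequality in \eqref{actual.equiintegrability.torus} is handled identically (telescoping plus Cauchy--Schwarz/Jensen on $\stw$).

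The one substantive difference is in the high-frequency tail estimate. The paper encodes the Fourier data in auxiliary measures $\mu_j^R,\nu_{n,j}^R\in\mathcal{M}_+(\stw)$ supported on the directions $k/|k|$, and controls $\int_\stw dn\int_\stw d\nu_{n,j}^R$ by a two-parameter splitting: a thin angular strip $\{|\omega\cdot n|<\kappa\}$ (small by Fubini on $\stw\times\stw$) and its complement (where $|k\cdot n|\geq R\kappa$ kills the sinc). You bypass this entirely by computing the spherical integral $A(k)=\int_\stw|\mu_{n,k}|^2\,dn$ explicitly in polar coordinates aligned with $k$, obtaining the sharp decay $A(k)\lesssim L^2/|k|$ in one line. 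Your route is more elementary and quantitatively cleaner for this proposition; the paper's measure-theoretic packaging is somewhat heavier here but is reused verbatim in the subsequent variants (Corollary~\ref{corollary.cpt1}, Lemma~\ref{general compact}) where the line length varies and the integrand depends on $n$, so the extra abstraction earns its keep downstream. Your direct-integration argument would also adapt to those cases, but would require recomputing the analogue of $A(k)$ each time.
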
The proposition above will provide the compactness theory required to conclude the proof of the existence of solutions to the original problem \eqref{rtenoscattering}. 
Finally, we study the existence of solutions for the full equation with both scattering and absorption-emission. In this case, we obtain the following existence theorem via the construction of suitable Green functions associated with the system.
\begin{theorem}[Full equations in the pseudo Grey case]\label{thm.full.pseudo.grey}
Let $\Omega\subset \rth$ be bounded and open with $C^1$-boundary and strictly positive curvature. Let 
$ \alpha_\nu^a(T(x))=Q_a(\nu)\alpha^a(T(x)) $ and $ \alpha_\nu^s(T(x))=Q_s(\nu)\alpha^s(T(x)) $ be bounded and strictly positive. Assume that $ Q_\ell\in C^1\left(\mathbb{R}_+\right) $ and $ \alpha^\ell\in C^1\left(\mathbb{R}_+\right)$ for $ \ell=a,s $. Assume $ K\in C^1\left(\stw\times\stw\right) $ be non-negative, rotationally symmetric, and independent of the frequency with the property \eqref{scattering assume}. 
Then there exists a solution $ (T,I_\nu) \in L^\infty(\Omega)\times L^\infty\left(\Omega, L^\infty\left(\stw,L^1(\mathbb{R}_+)\right)\right)$ to the equation \eqref{General rad eq} coupled with \eqref{heat equation} satisfying the boundary condition \eqref{incoming boundary}, where the $I_\nu$ is a solution to \eqref{General rad eq} in the sense of distribution.
\end{theorem}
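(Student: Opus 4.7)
The plan is to follow the same overall strategy as for Theorem~\ref{main theorem pseudo grey}: construct an operator $\mathcal{J}\colon T\mapsto T'$ whose fixed points correspond to solutions of \eqref{General rad eq}, \eqref{heat equation} and \eqref{incoming boundary}, and then apply a Schauder-type argument. The added difficulty relative to the pure absorption/emission case is that, for a given temperature profile $T$, the radiative transfer equation
\[
n\cdot\nabla_x I_\nu+\bigl(\alpha_\nu^a(T)+\alpha_\nu^s(T)\bigr)I_\nu=\alpha_\nu^a(T)B_\nu(T)+\alpha_\nu^s(T)\int_{\stw}K(n',n)I_\nu(x,n')\,dn'
\]
is itself a non-local equation for $I_\nu$ in the variable $n$. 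I would handle this by writing the transport and total loss as a first-order semigroup along characteristics, so that the equation becomes the fixed-point problem $I_\nu=\mathcal{S}_\nu[T]I_\nu+\Phi_\nu[T]$; here $\mathcal{S}_\nu[T]$ has operator norm at most $\sup_T\alpha^s(T)/(\alpha^a(T)+\alpha^s(T))<1$ uniformly over any compact range of temperatures, and $\Phi_\nu[T]$ is the explicit source built from $g_\nu$ and $B_\nu(T)$. The Neumann series $G_\nu[T]=\sum_{k\ge 0}\mathcal{S}_\nu[T]^k\Phi_\nu[T]$ then converges and expresses $I_\nu[T](x,n)$ as a sum of iterated line integrals multiplied by exponentials of the optical depth of $\alpha^a(T)+\alpha^s(T)$, which is precisely the structure addressed by Proposition~\ref{compact}.

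Once this representation is in place I would close the temperature equation by integrating the RTE against $dn\,d\nu$; the scattering contribution vanishes because $K$ is rotationally symmetric with $\int_{\stw}K\,dn=1$, and using $\alpha_\nu^a(T)=Q_a(\nu)\alpha^a(T)$ this reduces to
\[
4\pi\int_0^\infty Q_a(\nu)B_\nu(T'(x))\,d\nu=\int_0^\infty Q_a(\nu)\int_{\stw}I_\nu[T](x,n)\,dn\,d\nu=:\mathcal{R}[T](x).
\]
Since $\Phi(T)\eqdef 4\pi\int_0^\infty Q_a(\nu)B_\nu(T)\,d\nu$ is strictly increasing and bijective, this implicitly defines $\mathcal{J}[T]=\Phi^{-1}(\mathcal{R}[T])$. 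A priori $L^\infty$ bounds on the image of $\mathcal{J}$ follow from bounds on $I_\nu[T]$ obtained by comparing with the pure absorption/emission problem, since scattering conserves total intensity; this produces a closed invariant $L^\infty$-ball for $\mathcal{J}$ via the same black-body bounds used in Theorem~\ref{main theorem pseudo grey}.

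The decisive step is compactness of $\mathcal{J}$, where Proposition~\ref{compact} enters. Following the pattern of the proof of Theorem~\ref{main theorem pseudo grey}, I would first regularize the line-integral exponentials, prove existence for the regularized problem by a standard Schauder argument, and then pass to the limit using the equi-integrability estimate \eqref{actual.equiintegrability.torus}. Proposition~\ref{compact} gives $L^2$-compactness of the exponentials $\exp\bigl(-\int\beta(T)\,ds\bigr)$; feeding this through every term of the Neumann series and then through the $\nu$- and $n$-integrations in $\mathcal{R}[T]$ yields compactness of $T\mapsto\mathcal{R}[T]$, hence of $\mathcal{J}$, and Schauder's theorem produces the fixed point. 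The main obstacle I anticipate is the uniform-in-$k$ control of the Neumann iterates: each application of $\mathcal{S}_\nu[T]$ introduces both an additional line-integral exponential and an additional convolution in direction against $K$, so the compactness from Proposition~\ref{compact} must be propagated through an infinite sum. This is handled by the geometric decay coming from $\|\mathcal{S}_\nu[T]\|<1$ combined with the $C^1$-regularity of $K$ and the uniform positivity and boundedness of $\alpha^a,\alpha^s,Q_a,Q_s$, which together make all estimates uniform in $T$.
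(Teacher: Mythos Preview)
Your outline is essentially the same strategy the paper uses: expand $I_\nu$ as a Neumann/Duhamel series in the scattering operator, regularize, apply Schauder, and pass to the limit via the line-integral compactness. The paper phrases this through Green functions $\tilde I_\nu$ and a Poisson-type kernel $\psi_\nu$, but their Duhamel expansions are exactly your Neumann series. Two points, however, are handled in the paper by specific tools that your sketch glosses over.

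First, the $L^\infty$ self-map. Your bound $\|\mathcal S_\nu[T]\|\le\rho=\sup\frac{\alpha^s_\nu}{\alpha^a_\nu+\alpha^s_\nu}<1$ gives convergence of the Neumann series, but it does \emph{not} yield the contraction $\mathcal R^{em}[T]\le\theta\,\|u\|_\infty$ with $\theta<1$ that you need for an invariant ball. Summing the Duhamel series directly gives only $H_\eps\le\frac{\theta}{1-\theta}$ (or $\le\frac{1-\rho'}{1-\rho}$ with $\rho'=\inf\frac{\alpha^s_\nu}{\alpha^a_\nu+\alpha^s_\nu}$), which can exceed $1$ when the ratio $\alpha^s/(\alpha^a+\alpha^s)$ is not constant. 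The paper instead proves a weak maximum principle (Lemmas~\ref{weakmax} and \ref{weak.max.pseudo}) for the adjoint operator $L_\eps^*$, obtains $H_\eps\le 1$ from $L_\eps(1-H_\eps)=0$, and only then bootstraps via the recursive formula to $H_\eps\le\theta=1-e^{-\|\alpha_\nu\|_\infty D}<1$. Your phrase ``comparing with the pure absorption/emission problem, since scattering conserves total intensity'' is the right physical intuition but is not a substitute for this argument.

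Second, the compactness step. Each application of $\mathcal S_\nu[T]$ produces a function of the form $F_j^{(i)}(x,n,\nu)$ that depends on the direction $n$ through $K$, and Proposition~\ref{compact} by itself treats only line integrals of functions of $x$ alone. The paper extends the compactness to $n$-dependent integrands (Corollary~\ref{corollary.cpt1}, Proposition~\ref{prop.cpt}, and the pseudo-Grey version Proposition~\ref{prop.pseudo.grey}) via the Federer--Besicovitch covering lemma on $\stw$, using the uniform modulus of continuity in $n$ supplied by $K\in C^1$. You mention the $C^1$ regularity of $K$ but not this extra layer; without it, ``feeding Proposition~\ref{compact} through every term of the Neumann series'' does not go through.
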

\subsection{Strategy of the proof and main estimates}
 In the case of pure absorption and emission, our main strategy for the proof of the existence to the boundary-value problem is to reduce the stationary radiation equation and the ``divergence-free-radiation-flow" equation to a non-local non-linear elliptic equation for an explicit function $u$ of the temperature in the presence of an external source. This will allow to reformulate the problem \eqref{General rad eq} (with $\alpha^s=0$), \eqref{heat equation} and \eqref{incoming boundary} as
 \begin{equation}\label{non-local equation}
     u(x)-\int_0^\infty \int_\Omega \;\frac{F\left(\nu,u(\eta)\right)}{|x-\eta|^2}\exp{\left(-\int_{[x,\eta]}\alpha^a_\nu(T(\xi))ds(\xi)\right)}d\eta \:d\nu=S(x),
 \end{equation}
 where $[x,\eta]$ indicates the segment from $x\in\Omega$ to $\eta\in\Omega$ and the exact form of $u$, $F$ and $S$ are given in Section \ref{Sec.preliminary}. To prove the existence of a solution to the system given by equations \eqref{General rad eq}, \eqref{heat equation} and \eqref{incoming boundary} is equivalent to showing the existence of a solution to the non-local integral equation \eqref{non-local equation}. When the emission-absorption coefficient $\alpha_\nu^a$ depends on the local temperature and has the form $\alpha_\nu(x)=Q(\nu)\alpha(T(x))$, where $Q(\nu)$ is a function of the frequency which can be also constant, the strategy we use to prove the existence of a solution is the following. We consider first a regularized version of \eqref{non-local equation}, for which the existence of a solution is guaranteed by the Schauder fixed-point theorem. With the compactness result of Proposition \ref{compact}, which is based on the study of some auxiliary measures defined on the sphere $\stw$, it turns out that the sequence of regularized solutions is compact in $L^2$ and hence a subsequence converges pointwise almost everywhere to the solution of the original problem. We remark that obtaining $L^\infty$-estimates for the function $u$ solving \eqref{non-local equation} (or a regularized version of it) is not difficult using the structure of the integral operator. However, the main difficulty remains getting compactness. 
In the case of the problem including the scattering term, we use a similar strategy that however becomes more involved. 
To find a reformulation of \eqref{General rad eq}, \eqref{heat equation} and \eqref{incoming boundary} analogous to \eqref{non-local equation} we construct suitable fundamental solutions for a problem that includes absorption and scattering. These fundamental solutions satisfy recursive equations that allow us to obtain information about their properties using Duhamel series which contain terms involving exponentials of some integrals along straight lines as in \eqref{non-local equation}. Due to this we will regularize again the problem for which then solutions exist applying the Schauder fixed-point theorem. With the previous compactness result in Proposition \ref{compact} applied to finitely many terms of the Duhamel expansion we will obtain the compactness of the sequence of regularized solution in $L^2(\Omega)$ and thus the convergence pointwise almost everywhere to the solution of the full problem.

 \subsection{Outline of the paper}The rest of the paper is organized as follows. In Section \ref{sec.deriv.reg}, we provide the derivation of the non-local integral equation and the regularization of the equation which will be crucially used in the proof of the existence of solutions (Theorem \ref{main theorem pseudo grey}). Section \ref{Sec.emission.absorption} is devoted to the study of the existence of a solution to \eqref{General rad eq} in the absence of scattering. In Subsection \ref{sec.reg.exist} we prove the existence of solutions to the regularized problem. In Subsection \ref{sec.compactness.defect}, we provide a $L^2$ compactness theory of non-linear operators of line integrals based on the study of some auxiliary measures defined on the sphere $\stw$. This compactness theory will be used to obtain the compactness of the solution sequences of the regularized problem and this allows us to show the existence of the original problem stated in Theorem \ref{main theorem pseudo grey} in Subsection \ref{sec.grey proof}.  
In Section \ref{sec.full} we show the existence of solutions to the full equation \eqref{General rad eq} taking into account also the scattering term, in particular we will prove Theorem \ref{thm.full.pseudo.grey}. This will be made in several steps starting from the study of the Grey case deriving a non-local equation for the temperature (Subsection \ref{Sec.full.grey}) and constructing suitable Green functions which encode the effect of the scattering (Subsection \ref{Sec.full.green}). Subsections \ref{Sec.full.reg} to \ref{Sec.full.cmpt} are devoted to the proof of  existence of solutions to the equation \eqref{General rad eq} in the Grey case. There, a regularized problem is solved by means of Schauder's fixed-point theorem and a weak maximum principle, while the compactness result of Subsection \ref{sec.compactness.defect} is used to conclude the existence of a solution for the Grey approximation. Finally, in Subsection \ref{Sec.full.pseudo} we provide the proof of Theorem \ref{thm.full.pseudo.grey}.

\section{Derivation of a non-local integral equation for the temperature and the regularization of the equation}\label{sec.deriv.reg}
\subsection{Derivation of a non-local integral equation}\label{Sec.preliminary} In this section, we will first derive a non-local integral equation that is satisfied by the temperature. This equation is associated to the stationary equation
\begin{equation}\label{eq. absorption only}
    n\cdot \nabla_x I_\nu =\alpha_\nu^a (T)(B_\nu(T)-I_\nu).
\end{equation}
   Without loss of generality, we can assume $\sigma=1$ by rescaling variables. We define for every $(x,n)\in\Omega\times\stw$ a new coordinate system with variables $(y,s)=(y(x,n),s(x,n))\in\partial\Omega\times \mathbb{R}_{\geq0} $. These variables are defined in the following way. We consider for every $ x\in\Omega$ and $ n\in\stw$ the backward trajectory starting from $x$ in direction $-n$. Then $y(x,n)\in\partial\Omega$ is the boundary point that intersects with this straight line and $s(x,n)$ is its length, i.e. $s(x,n)=|x-y(x,n)|$ and $x=y+sn.$
   Therefore, using this notation, solving by characteristics $I_\nu$ and integrating \eqref{eq. absorption only}, we obtain that the flow
   $ \mathcal{F}=\int_0^\infty\int_{\mathbb{S}^2} nI_\nu(x,n)dnd\nu$ satisfies \begin{multline}\label{eq.flow.wellposed}
       \mathcal{F}
       = \int_0^\infty d\nu\int_{\mathbb{S}^2}dn \ ng_\nu(n)\exp\left(-\int_0^{s(x,n)}\alpha_\nu(T(y(x,n)+\zeta n))d\zeta\right) 
      \\ +\left[\int_0^\infty d\nu \int_{\mathbb{S}^2}dn \ \int_0^{s(x,n)}d\xi \exp\left(-\int^{s(x,n)}_{\xi}\alpha_\nu(T(y(x,n)+\zeta n))d\zeta)\right) n\right.\\\times \alpha_\nu\left(T(y(x,n)+\xi n)\right)B_\nu\left(T(y(x,n)+\xi n)\right)\bigg]
       =: \mathcal{F}_1+\mathcal{F}_2.
   \end{multline}
   Now we recall the \textit{conservation of energy} \eqref{heat equation} that yields $\nabla_x\cdot \mathcal{F}=0.$ In order to use this condition, we take the divergence of \eqref{eq.flow.wellposed}. 
   
   We first compute $\nabla_x\cdot \mathcal{F}_2$. We define new variables $\hat{\xi}\eqdef s-\xi$ and  $\hat{\zeta}\eqdef s-\zeta$ and make a change of variables $\xi\mapsto\hat{\xi}$in the integral, then
   \begin{multline*}
     \mathcal{F}_2=  \int_0^\infty d\nu\int_{\mathbb{S}^2}dn \ \int_0^{s(x,n)}d\xi \exp\left(-\int^{s(x,n)}_{\xi}\alpha_\nu(T(y(x,n)+\zeta n))d\zeta)\right) n\alpha_\nu(T)B_\nu(T)\\
    = \int_0^\infty d\nu\int_{\mathbb{S}^2}dn \ \int_0^{s(x,n)}d\hat{\xi}\exp\left(-\int^{\hat{\xi}}_{0}\alpha_\nu(T(x-\hat{\zeta}n))d\hat{\zeta}\right) n\alpha_\nu(T(x-\hat{\xi}n))B_\nu\left(T(x-\hat{\xi}n)\right)\\
    =\int_\Omega d\eta \int_0^\infty d\nu \exp\left(-\int^{|x-\eta|}_{0}\alpha_\nu\bigg(T\left(x-\hat{\zeta}\frac{x-\eta}{|x-\eta|}\right)\bigg)d\hat{\zeta}\right) \frac{x-\eta}{|x-\eta|}\alpha_\nu(T(\eta ))\frac{B_\nu(T(\eta ))}{|x-\eta|^2},
   \end{multline*}since the Jacobian gives $\frac{\partial (\hat{\xi},n)}{\partial \eta}=\frac{1}{|x-\eta|^2}$
   where $\eta\eqdef x-\hat{\xi}n$ and $\hat{\xi}=(x-\eta)\cdot n = |x-\eta|$. Also note that $n=\frac{x-\eta}{|x-\eta|}.$
   Therefore, we have
   \begin{equation}\label{nablaF2 ori}\nabla_x\cdot \mathcal{F}_2 =\int_0^\infty d\nu\int_\Omega d\eta \ \alpha_\nu(T(\eta ))B_\nu(T(\eta )) \nabla_x\cdot (\varphi v),\end{equation} where we define 
   \begin{equation*}
       \varphi(x,\eta)\eqdef \exp\left(-\int^{|x-\eta|}_{0}\alpha_\nu\bigg(T\left(x-\hat{\zeta}\frac{x-\eta}{|x-\eta|}\right)\bigg)d\hat{\zeta}\right) \text{ and }v\eqdef \frac{x-\eta}{|x-\eta|^3}.
   \end{equation*}
  We now use that 
   $$\nabla_x\cdot (\varphi v)=\nabla\varphi\cdot v +\varphi\nabla\cdot v,$$ where $\Div(v)=4\pi\delta(x-\eta),$ and
   \begin{multline*}
       \nabla \varphi = -\varphi \alpha_\nu(T(\eta)) \frac{x-\eta}{|x-\eta|}\\-\varphi\int^{|x-\eta|}_{0}\frac{d\alpha_\nu}{dT}\left(T\left(x-\hat{\zeta}\frac{x-\eta}{|x-\eta|}\right)\right)\left(\nabla T\left(x-\hat{\zeta}\frac{x-\eta}{|x-\eta|}\right)\right)\cdot D_x\left(x-\hat{\zeta}\frac{x-\eta}{|x-\eta|} \right)d\hat{\zeta}.
   \end{multline*}Note that
   \begin{multline}\label{Dx}
 \bigg(   \frac{x-\eta}{|x-\eta|}\cdot  D_x\bigg(\frac{x-\eta}{|x-\eta|}\bigg)\bigg)_l=\sum_{j=1}^3 \frac{(x-\eta)_j}{|x-\eta|}\bigg(\frac{\delta_{jl}}{|x-\eta|}-\frac{(x_l-\eta_l)}{|x-\eta|^3}(x_j-\eta_j)\bigg)\\= \frac{(x-\eta)_l}{|x-\eta|}\frac{1}{|x-\eta|}-\frac{|x-\eta|(x_l-\eta_l)}{|x-\eta|^3}=0.
 \end{multline}
   Using $\varphi(\eta,\eta)=1,$ we have
   \begin{multline*}
       \nabla_x\cdot (\varphi v) = 4\pi \delta(x-\eta)-\varphi \alpha_\nu(T(\eta)) \frac{1}{|x-\eta|^2}\\-\frac{\varphi}{|x-\eta|^2}\int^{|x-\eta|}_{0}\frac{d\alpha_\nu}{dT}(T\left(x-\hat{\zeta}\frac{x-\eta}{|x-\eta|}\right))\frac{x-\eta}{|x-\eta|}\cdot \nabla T\left(x-\hat{\zeta}\frac{x-\eta}{|x-\eta|}\right)d\hat{\zeta}\\
       =4\pi \delta(x-\eta)-\varphi \alpha_\nu(T(\eta)) \frac{1}{|x-\eta|^2}\\+\frac{\varphi}{|x-\eta|^2}\int^{|x-\eta|}_{0}\frac{d\alpha}{dT}(T\left(x-\hat{\zeta}\frac{x-\eta}{|x-\eta|}\right))\frac{d}{d\hat{\zeta}} (T\left(x-\hat{\zeta}\frac{x-\eta}{|x-\eta|}\right))d\hat{\zeta}\\
       =4\pi \delta(x-\eta)-\varphi \alpha_\nu(T(\eta)) \frac{1}{|x-\eta|^2}+\frac{\varphi}{|x-\eta|^2}\left(\alpha_\nu(T(\eta)-\alpha_\nu(T(x))\right)\\
       =4\pi \delta(x-\eta)-\frac{\varphi}{|x-\eta|^2}\alpha_\nu(T(x)).
   \end{multline*}
   Therefore, by \eqref{nablaF2 ori} we have
   \begin{multline}\label{nablaF2}
       \nabla_x\cdot \mathcal{F}_2 =4\pi\int_0^\infty \alpha_\nu(T(x))B_\nu (T(x)) d\nu\\-\int_0^\infty d\nu \alpha_\nu(T(x))\int_\Omega d\eta \ \alpha_\nu(T(\eta ))B_\nu(T(\eta )) \frac{\exp\left(-\int^{|x-\eta|}_{0}\alpha_\nu(T\left(x-\hat{\zeta}\frac{x-\eta}{|x-\eta|}\right))d\hat{\zeta}\right)}{|x-\eta|^2}.
   \end{multline} We will see that the integral operator in \eqref{nablaF2} is a contractive operator.
   
   Now we compute $\nabla_x\cdot \mathcal{F}_1.$ Using \eqref{eq.flow.wellposed}, we have
   \begin{multline*}
      \nabla_x\cdot \mathcal{F}_1=  \int_0^\infty \int_{\mathbb{S}^2}dn \ \exp\left(-\int_0^{s(x,n)}\alpha_\nu(T(y(x,n)+\zeta n))d\zeta\right)
     \\\times n\cdot\bigg[
    - g_\nu(n)\nabla_xs\alpha_\nu(T(x))-g_\nu(n)\int_0^{s(x,n)}\nabla_x((\alpha_\nu \circ T)(y(x,n)+\zeta n))d\zeta\bigg].
   \end{multline*}
Here we observe that 
\begin{multline*}n\cdot \nabla_x((\alpha_\nu \circ T)(y(x,n)+\zeta n))=\frac{d}{dt}((\alpha_\nu \circ T)(y(x+tn,n)+\zeta n))|_{t=0}\\=\frac{d}{dt}((\alpha_\nu \circ T)(y(x,n)+\zeta n))|_{t=0}=0,\end{multline*} since $y(x+tn,n)=y(x,n).$
Also note that 
that $n\cdot\nabla_x s=1$. This holds by the following observation. For any $x_0\in \Omega,$ we have  
$$y(x_0+\zeta n,n)+s(x_0+\zeta n,n)n = x_0+\zeta n, $$ and hence $$y(x_0,n)+s(x_0+\zeta n,n)n = x_0+\zeta n.$$ We can differentiate it with respect to $\zeta$ and we obtain 
$$\frac{d}{d\zeta}(s(x_0+\zeta n,n)n)=(\nabla_x s(x_0+\zeta n,n)\cdot n)n=n.$$ Thus we have
\begin{equation}\label{nablaF1 final}
      \nabla_x\cdot \mathcal{F}_1=  -\int_0^\infty d\nu \int_{\mathbb{S}^2}dn \ \exp\left(-\int_0^{s(x,n)}\alpha_\nu(T(y(x,n)+\zeta n))d\zeta\right)\alpha_\nu(T(x))g_\nu(n),
\end{equation}and note that $\nabla_x \cdot \mathcal{F}_1 \le 0$ and $|\nabla_x \cdot \mathcal{F}_1|$ is bounded from above in $L^\infty$, since $\alpha_\nu(\cdot)$ is bounded and $G=\int_0^\infty d\nu\ g_\nu(n)\in L^\infty(\mathbb{S}^2)$.

Combining \eqref{nablaF2} and \eqref{nablaF1 final} we finally obtain
    \begin{multline}\label{nablaF}
       \nabla_x\cdot \mathcal{F} =4\pi\int_0^\infty \alpha_\nu(T(x))B_\nu(T(x))d\nu \\-\int_0^\infty d\nu\alpha_\nu(T(x))\int_\Omega d\eta \ \alpha_\nu(T(\eta ))B_\nu(T(\eta )) \frac{\exp\left(-\int^{|x-\eta|}_{0}\alpha_\nu(T\left(x-\hat{\zeta}\frac{x-\eta}{|x-\eta|}\right))d\hat{\zeta}\right)}{|x-\eta|^2}\\-\int_0^\infty d\nu\int_{\mathbb{S}^2}dn \ \exp\left(-\int_0^{s(x,n)}\alpha_\nu(T(y(x,n)+\zeta n))d\zeta\right)\alpha_\nu(T(x))g_\nu(n) = 0.
   \end{multline} 
   In the pseudo Grey case as in Theorem \ref{main theorem pseudo grey} the absorption coefficient has the form $\alpha_\nu(T(x))=Q(\nu)\alpha(T(x))$ and it is strictly positive and bounded. Hence dividing by $\alpha(T(x)) $ equation \eqref{nablaF} reads
   \begin{multline}\label{nablaFpseudogrey}
      4\pi\int_0^\infty Q(\nu)B_\nu(T(x))d\nu \\=\int_0^\infty d\nu\ Q^2(\nu)\int_\Omega d\eta \ \alpha(T(\eta ))B_\nu(T(\eta )) \frac{\exp\left(-Q(\nu)\int^{|x-\eta|}_{0}\alpha(T\left(x-\hat{\zeta}\frac{x-\eta}{|x-\eta|}\right))d\hat{\zeta}\right)}{|x-\eta|^2}\\+\int_0^\infty d\nu\int_{\mathbb{S}^2}dn \ \exp\left(-Q(\nu)\int_0^{s(x,n)}\alpha(T(y(x,n)+\zeta n))d\zeta\right)Q(\nu)g_\nu(n).
   \end{multline} 
   Assuming now the Grey approximation, i.e. assuming that the absorption coefficient is strictly positive and independent of $\nu$ (i.e. $\alpha_\nu(T(x))=\alpha(T(x))$), and using the Stefan Law \eqref{integration of blackbody} we obtain dividing \eqref{nablaF} by $ \alpha(T(x))$ and defining $G(n)=\int_0^\infty g_\nu(n)d\nu$,\begin{multline}\label{nablaFreduced}
   4\pi(T(x))^4 =\int_\Omega d\eta \ \alpha(T(\eta ))(T(\eta ))^4 \frac{\exp\left(-\int^{|x-\eta|}_{0}\alpha(T\left(x-\hat{\zeta}\frac{x-\eta}{|x-\eta|}\right))d\hat{\zeta}\right)}{|x-\eta|^2}\\+\int_{\mathbb{S}^2}dn \ \exp\left(-\int_0^{s(x,n)}\alpha(T(y(x,n)+\zeta n))d\zeta\right)G(n) \ge 0.
   \end{multline}

   \subsection{Non-local integral equation in the Grey case}
   We will focus next on the Grey approximation and we will prove the following theorem.
\begin{theorem}\label{Grey thm}
Let $\Omega\subset \rth$ be bounded and open with $C^1$-boundary and strictly positive curvature.
Suppose that the incoming boundary profile satisfies $\|G\|_{L^\infty(\mathbb{S}^2)}<\infty$, where $G(n)=\int_0^\infty g_\nu(n)d\nu$. In addition, suppose that the absorption coefficient $\alpha(\cdot)$ is bounded and strictly positive and assume $\alpha_s=0$. Then there exists a solution $(T,I_\nu)\in L^\infty(\Omega)\times L^\infty\left(\Omega,L^\infty\left(\stw,L^1\left(\mathbb{R}_+\right)\right)\right)$ which solves the boundary-value problem \eqref{General rad eq}-\eqref{incoming boundary} coupled with the \textit{conservation of energy} \eqref{heat equation}. $I_\nu$ is a solution to \eqref{General rad eq} in the sense of distribution.
\end{theorem}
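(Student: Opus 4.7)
The plan is to exploit the reformulation of the boundary-value problem as the scalar non-local integral equation \eqref{nablaFreduced} for $T$, derived in Subsection \ref{Sec.preliminary}. Once an $L^\infty$ solution $T$ of \eqref{nablaFreduced} is obtained, the intensity $I_\nu$ is reconstructed by integrating \eqref{eq. absorption only} along characteristics with incoming data $g_\nu$; the divergence-free condition \eqref{heat equation} then follows automatically, since \eqref{nablaFreduced} is precisely the identity $\nabla_x\cdot\mathcal F/\alpha(T)=0$. The main obstacle will be obtaining strong $L^2$-compactness of the exponential-of-line-integral terms in \eqref{nablaFreduced} when the regularization is removed.

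To solve \eqref{nablaFreduced} I would first regularize with a small parameter $\delta>0$ by replacing $\alpha(T(\cdot))$ inside each line integral by its mollification $\alpha(T)\ast\rho_\delta$, where $\rho_\delta$ is a standard smooth mollifier (after extending $T$ by a constant outside $\Omega$). Define $\Phi_\delta:T\mapsto\tilde T$ to be the positive fourth root of $(4\pi)^{-1}$ times the right-hand side of the mollified \eqref{nablaFreduced}. The identity $\int_\Omega\alpha(T(\eta))\exp\bigl(-\int_0^{|x-\eta|}\alpha(T)d\hat\zeta\bigr)|x-\eta|^{-2}d\eta=\int_{\stw}\bigl(1-\exp(-\int_0^{s(x,n)}\alpha(T)dr)\bigr)dn$, which follows from the coordinate change $\eta\mapsto(r,n)$ used in the derivation of \eqref{nablaF2}, together with the strictly positive lower bound $\int_{\stw}\exp(-\int_0^{s(x,n)}\alpha(T)dr)dn\geq 4\pi\exp(-\|\alpha\|_\infty\mathrm{diam}(\Omega))$, furnishes the $\delta$-independent estimate $0\leq\tilde T\leq\|G\|_{L^\infty(\stw)}^{1/4}$ via a maximum-principle-type argument. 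Mollification makes $\Phi_\delta$ continuous and compact (by Sobolev embedding) on the closed convex bounded set $\{T\in L^2(\Omega):0\leq T\leq\|G\|_{L^\infty}^{1/4}\}$, and Schauder's fixed-point theorem supplies a solution $T_\delta$.

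The second and central step is the passage to the limit $\delta\to 0^+$. Setting $\vphi_\delta\eqdef\alpha(T_\delta)$ extended periodically on a sufficiently large torus $\Pi^3\supset\Omega$, Proposition \ref{compact} provides the $\delta$-uniform $L^2(\Pi^3\times\stw)$-equicontinuity of the line integrals $L_n[\vphi_\delta]$ in $x$. The Lipschitz bound $|e^{-a}-e^{-b}|\leq|a-b|$ for $a,b\geq 0$ combined with the Cauchy--Schwarz inequality in $n$ then transfers this equicontinuity to the boundary term $\int_{\stw}\exp(-L_n[\vphi_\delta](x))G(n)dn$, whose translation increments in $L^2$ are controlled by $4\pi\|G\|_\infty^2\int dx\int_{\stw}|L_n[\vphi_\delta](x)-L_n[\vphi_\delta](x+h)|^2dn$, i.e.\ by the middle term of \eqref{actual.equiintegrability.torus}. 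For the bulk term, one first rewrites it in spherical coordinates as $\int_{\stw}dn\int_0^{s(x,n)}\alpha(T_\delta)T_\delta^4\exp(-\int_0^r\alpha dr')dr$ and applies the same Lipschitz-plus-Cauchy--Schwarz estimate to the exponential factor, while the remaining $\alpha(T_\delta)T_\delta^4$ factor is controlled using the translation continuity in $L^1$ of the Riesz-like kernel $|x-\eta|^{-2}\mathbf{1}_\Omega$. Assembling these estimates into a single $\delta$-uniform $L^2$-equicontinuity statement for the right-hand side of \eqref{nablaFreduced}, and hence for $T_\delta^4$ and $T_\delta$, is the principal technical obstacle.

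Finally, the Fr\'echet--Kolmogorov criterion yields a subsequence $T_{\delta_k}\to T$ strongly in $L^2(\Omega)$ and pointwise a.e. Passing to the limit in the regularized \eqref{nablaFreduced} via the dominated convergence theorem — the kernel $|x-\eta|^{-2}$ is integrable on $\Omega$ and all remaining factors are uniformly bounded — shows that $T\in L^\infty(\Omega)$ satisfies \eqref{nablaFreduced} a.e. Reconstructing $I_\nu(x,n)$ by integrating \eqref{eq. absorption only} along characteristics with incoming data $g_\nu$ gives $I_\nu\in L^\infty(\Omega,L^\infty(\stw,L^1(\mathbb{R}_+)))$ via the estimate $\int_0^\infty I_\nu(x,n)d\nu\leq G(n)+\|T^4\|_\infty$ that follows from $\int_0^\infty B_\nu(T)d\nu=T^4$ and $\|G\|_{L^\infty(\stw)}<\infty$. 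The divergence-free condition \eqref{heat equation} is then automatic from the derivation in Subsection \ref{Sec.preliminary}.
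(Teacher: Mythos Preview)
Your overall strategy --- regularize, Schauder fixed point, compactness via Proposition~\ref{compact}, dominated convergence --- matches the paper exactly. Two gaps remain, one minor and one substantial.

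The minor one: you mollify $\alpha(T)$ only inside the line integrals, but the identity you quote (rewriting the bulk integral as $\int_{\stw}(1-\exp(-\cdots))\,dn$) requires the coefficient $\alpha(T(\eta))$ in front of $T^4(\eta)$ to coincide with the integrand of the line integral, so that the fundamental-theorem-of-calculus step \eqref{u upper bnd} goes through. The paper's regularization \eqref{regular} mollifies \emph{both} the coefficient and the exponent by the same $\phi_\eps$; the computation \eqref{derivative} shows this preserves the identity and hence the clean contraction bound $(1-\delta)\|u\|_\infty$. With your version that bound is not available.

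The substantial gap is in the compactness of the bulk term. You propose to split $B(x)-B(x+h)$ into a Riesz-kernel piece (handled by translation continuity of $|x-\eta|^{-2}$ in $L^1$) and an exponential piece (Lipschitz bound $|e^{-a}-e^{-b}|\le|a-b|$ plus Proposition~\ref{compact}). But after the Lipschitz step the exponential piece contains $L_{[x,\eta]}-L_{[x+h,\eta]}$, line integrals over \emph{non-parallel} segments sharing only the endpoint $\eta$. Proposition~\ref{compact} and Lemma~\ref{general compact} control only $L_{n,r}[\vphi](x)-L_{n,r}[\vphi](x+h)$, i.e.\ same direction $n$, base point shifted by $h$. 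Reducing your difference to that form leaves a remainder of type $r\int_0^1[\vphi(x+t(h-rn))-\vphi(x-trn)]\,dt$, whose $L^2(dx)$-norm is exactly the translation modulus of $\vphi_\delta=\alpha(T_\delta)\ast\rho_\delta$ in $L^2$. Controlling that uniformly in $\delta$ is equivalent to the $L^2$-precompactness of $\{\vphi_\delta\}$, which is what you are trying to prove --- the argument is circular. You flag this step as ``the principal technical obstacle'' but do not resolve it.

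The paper breaks the loop by passing to spherical coordinates first (so all segments have fixed direction $n$), expanding $\exp(-L_{n,r})$ as a power series, and applying the Fubini identity \eqref{fubini} to push $\int_0^D dr$ inside the $\int_0^r d\lambda_i$'s. This turns the factor $\psi_\delta(x-rn)=[\gamma(u_\delta)\ast\phi_\delta]\,u_\delta(x-rn)$ into the line integral $\int_{\max_i\lambda_i}^D\psi_\delta(x-rn)\,dr$, to which Lemma~\ref{general compact} applies directly because $\{\psi_\delta\}$ is merely uniformly bounded in $L^\infty$ --- no equicontinuity needed. See \eqref{estimate1} and the surrounding argument.
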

In order to prove Theorem \ref{Grey thm} we aim to use a fixed-point argument. To this end we first see that $u$ satisfies an $L^\infty$-estimate. Indeed, observe that $\Omega$ is bounded in $\mathbb{R}^3$ and
   \begin{multline}\label{u upper bnd}
       \int_\Omega d\eta \ \alpha(T(\eta ))(T(\eta ))^4 \frac{\exp\left(-\int^{|x-\eta|}_{0}\alpha(T\left(x-\hat{\zeta}\frac{x-\eta}{|x-\eta|}\right))d\hat{\zeta}\right)}{|x-\eta|^2}\\
       =\int_{\mathbb{S}^2}dn \int_0^{s(x,n)}r^2 dr  \ \alpha(T(\eta ))(T(\eta ))^4 \frac{\exp\left(-\int^{r}_{0}\alpha(T(x-\hat{\zeta}n))d\hat{\zeta}\right)}{r^2}\\
       = \int_{\mathbb{S}^2}dn \int_0^{s(x,n)}  dr \ (T(\eta ))^4 \left(-\frac{d}{dr}\exp\left(-\int^{r}_{0}\alpha(T(x-\hat{\zeta}n))d\hat{\zeta}\right)\right)\\
       \le \|T\|_{L^\infty(\Omega)}^4 \int_{\mathbb{S}^2}dn\left(1-\exp\left(-\int^{s(x,n)} _{0}\alpha(T(x-\hat{\zeta}n))d\hat{\zeta}\right)\right)\\
       \le 4\pi(1-\delta)\|T\|_{L^\infty(\Omega)}^4,
   \end{multline}where $$\delta =  \exp\left(-\|\alpha\|_{L^\infty}\max_{x\in\Omega, \ n\in\mathbb{S}^2}s(x,n)\right)>0.$$
   In addition, we observe
  \begin{multline}\label{G upper bound}
       \left|\int_{\mathbb{S}^2}dn \ \exp\left(-\int_0^{s(x,n)}\alpha(T(y(x,n)+\zeta n))d\zeta\right)G(n)\right|\\\le \int_{\mathbb{S}^2}G(n)dn=\|G\|_{L^1(\mathbb{S}^2)}\leq 4\pi \|G\|_{L^\infty(\mathbb{S}^2)}<\infty.
  \end{multline}
  
  Let us now define $ u(x)=4\pi\sigma T^4(x) $. Hence, we write $ \gamma(u(x))=\alpha\left(\sqrt[4]{\frac{u(x)}{4\pi\sigma}}\right) $. In order to simplify the notation we also denote by $$\int_{[x,\eta]} f(\xi)d\xi=\int_0^{|x-\eta|}f\left(x+t\frac{\eta-x}{|\eta-x|}\right)\;dt .$$ Then we obtain
  \begin{multline}\label{nablaFreducedu}
  u(x) =\int_\Omega d\eta \ \gamma(u(\eta))u(\eta ) \frac{\exp\left(-\int_{[x,\eta]}\gamma(u\left(\zeta\right))d\zeta\right)}{4\pi|x-\eta|^2}\\+\int_{\mathbb{S}^2}dn \ \exp\left(-\int_{[x,y(x,n)]}\gamma(u(\zeta))d\zeta\right)G(n).
  \end{multline} This completes the derivation of the non-local integral equation. In the next Subsection, we consider the regularization of the line integral in the non-local equation. 
 \subsection{Regularization of the non-local equation}\label{sec.regular}
  In order to prove the existence of a function $ u $ solving the non-local equation \eqref{nablaFreducedu}, we will consider a regularization of the line integral. For the the regularized problem we will apply Schauder's fixed-point theorem and show the existence of a solution. We obtain in this way a sequence of solutions to the regularized problem. We will hence show that the sequence of integral operators acting on that regularized solutions is compact in $ L^2 $. This implies the existence of a subsequence convergent pointwise almost everywhere to a function $ u $. After an application of the dominated convergence theorem we will show that this limit function is a solution to the original problem \eqref{nablaFreducedu}.
  
  Let $ \phi_\eps\in C_c^\infty(\rth) $ be a standard positive and radially symmetric mollifier. Given a segment $ \Gamma $ we define $ \int_\rth \delta_\Gamma(y)\varphi(y)dy=\int_\Gamma \varphi(\xi)d\xi$. Hence, for $ x,\eta\in\Omega $
  \begin{multline}\label{convolution}
  \int_\rth F(\xi)\delta_{[x,\eta]}*\phi_\eps (\xi)d\xi=\int_\rth F(\xi)\int_0^{|\eta-x|}\phi_\eps\left(\xi-x-\lambda \frac{\eta-x}{|\eta-x|}\right)d\lambda d\xi\\
  =\int_\rth \int_0^{|\eta-x|}F\left(\xi+x+\lambda \frac{\eta-x}{|\eta-x|}\right)\phi_\eps (\xi)d\lambda d\xi\\\overset{\eps\to 0}{\longrightarrow}\int_0^{|\eta-x|}F\left(x+\lambda \frac{\eta-x}{|\eta-x|}\right)d\lambda.
  \end{multline}
  In order to have also the same type of $ L^\infty $ estimate we consider
  \begin{multline}\label{regular}
  u(x)=\mathcal{B}^\eps(u)(x) \eqdef \int_\Omega d\eta \ \left(\gamma(u)*\phi_\eps\right)(\eta)u(\eta ) \frac{\exp\left(-\int_\rth\gamma(u\left(\xi\right))\delta_{[x,\eta]}*\phi_\eps (\xi)d\xi\right)}{4\pi|x-\eta|^2}\\+\int_{\mathbb{S}^2}dn \ \exp\left(-\int_\rth\gamma(u(\xi))\delta_{[x,y(x,n)]}*\phi_\eps (\xi)d\xi\right)G(n).
  \end{multline}
  We remark that by the smoothness of $\gamma$ and the continuity of the exponential function the integral operator $\mathcal{B}^\eps$ is continuous.
The interesting part of this regularization is that we can get the same type of $L^\infty$-estimate as for the original problem. Indeed, using the symmetry of $ \phi_\eps $ and again the change of variables $ \eta=x-rn $ we see that
  \begin{multline}\label{derivative}
  \frac{d}{dr}\exp\left(-\int_\rth \gamma(u(\xi))\int_0^r\phi_\eps(\xi-x+\lambda n)d\lambda d\xi\right)\\
  =-\exp\left(-\int_\rth \gamma(u(\xi))\int_0^r\phi_\eps(\xi-x+\lambda n)d\lambda d\xi\right)\int_\rth \phi_\eps(x-rn-\xi)\gamma(u(\xi))d\xi\\=-\exp\left(-\int_\rth \gamma(u(\xi))\int_0^r\phi_\eps(\xi-x+\lambda n)d\lambda d\xi\right)\left(\gamma(u)*\phi_\eps\right)(x-rn).
  \end{multline}
  We can then argue as in \eqref{u upper bnd}  and \eqref{G upper bound} that $\Arrowvert \mathcal{B}^\eps(u)\Arrowvert_\infty\leq (1-\delta) \Arrowvert u\Arrowvert_\infty+\Arrowvert G\Arrowvert_{L^1}$. We have hence obtained a suitable regularization of equation \eqref{nablaFreducedu}. In the next Subsection, we will prove the existence of a solution to the regularized problem.
  \section{Existence theory for the pure emission-absorption case}\label{Sec.emission.absorption}
  \subsection{Existence of solutions to the regularized problem in the Grey case}\label{sec.reg.exist}
  We are now ready to prove the existence of a solution to the regularized problem \eqref{regular} for the Grey approximation. We start with the $ L^\infty $-estimate and we proceed exactly as before. Hence, for $ D=\text{diam}\left(\Omega\right) $, passing to spherical coordinates and using \eqref{derivative} we obtain
  \begin{equation*}
  \Arrowvert \mathcal{B}^\eps (u)\Arrowvert_\infty\leq \Arrowvert u \Arrowvert_\infty \left(1-e^{-D \Arrowvert\gamma\Arrowvert_\infty}\right)+\Arrowvert G\Arrowvert_{L^1}.
  \end{equation*} 
  Thus, for $ K>\Arrowvert G\Arrowvert_{L^1}e^{D \Arrowvert\gamma\Arrowvert_\infty} $ we see that the operator $ \mathcal{B}^\eps$ maps continuously the set $$ \left\{u\in L^\infty(\Omega): u\geq 0,\; \Arrowvert u\Arrowvert_\infty\leq K\right\} $$ to itself.
  Actually, it is a compact operator mapping the non-negative continuous functions bounded by $ K $ to the H\"older continuous functions. This is relevant because it allows us to apply the Schauder fixed-point theorem (cf. \cite{evans}).
  
  To this end we assume now $ u\in C(\Omega) $ and $ u\geq 0 $. By definition, we can extend it continuously up to the boundary $\bnd$. Moreover, we extend by zero both functions $u$ and $\gamma(u)$ outside $\bar{\Omega}$ such that the convolution $\gamma(u)*\phi_\eps$ is smooth and well-defined. Let $x\in\Omega$ and $h\in\rth$ with $x+h\in\Omega$. We estimate
\begin{multline}\label{schauder1}
  \left|\mathcal{B}^\eps(u)(x)-\mathcal{B}^\eps(u)(x+h)\right|\\
  \leq \left|\int_\Omega d\eta \ \left(\gamma(u)*\phi_\eps\right)(\eta)u(\eta ) \frac{\exp\left(-\int_\rth\gamma(u\left(\xi\right))\delta_{[x,\eta]}*\phi_\eps (\xi)d\xi\right)}{4\pi|x-\eta|^2}\right.\\\left.\;\;\;\;\;\;\;\;\;\;\;\;\;\;\;-\int_\Omega d\eta \ \left(\gamma(u)*\phi_\eps\right)(\eta)u(\eta ) \frac{\exp\left(-\int_\rth\gamma(u\left(\xi\right))\delta_{[x+h,\eta]}*\phi_\eps (\xi)d\xi\right)}{4\pi|x+h-\eta|^2}\right|\\
  +\left|\int_{\mathbb{S}^2}dn \exp\left(-\int_\rth\gamma(u(\xi))\delta_{[x,y(x,n)]}*\phi_\eps (\xi)d\xi\right)G(n)\right.\\\left.-\int_{\mathbb{S}^2}dn \exp\left(-\int_\rth\gamma(u(\xi))\delta_{[x+h,y(x+h,n)]}*\phi_\eps (\xi)d\xi\right)G(n)\right|\\
  \leq\frac{1}{4\pi} \int_\Omega d\eta \ \left(\gamma(u)*\phi_\eps\right)(\eta)u(\eta ) \exp\left(-\int_\rth\gamma(u\left(\xi\right))\delta_{[x,\eta]}*\phi_\eps (\xi)d\xi\right)\\\;\;\;\;\;\;\;\;\;\;\;\;\;\;\;\;\;\;\;\;\;\;\;\;\;\;\;\;\;\;\times \left|\frac{1}{|x-\eta|^2}-\frac{1}{|x+h-\eta|^2}\right|\\
  +\int_\Omega d\eta \ \frac{\left(\gamma(u)*\phi_\eps\right)(\eta)u(\eta )}{4\pi|x+h-\eta|^2} \left|\exp\left(-\int_\rth\gamma(u\left(\xi\right))\delta_{[x,\eta]}*\phi_\eps (\xi)d\xi\right)\right.\\\left.\;\;\;\;\;\;\;\;\;\;\;\;\;\;\;\;\;\;\;\;\;\;-\exp\left(-\int_\rth\gamma(u\left(\xi\right))\delta_{[x+h,\eta]}*\phi_\eps (\xi)d\xi\right)\right|\\+\int_{\mathbb{S}^2}dn\ G(n)\left|\exp\left(-\int_\rth\gamma(u(\xi))\delta_{[x,y(x,n)]}*\phi_\eps (\xi)d\xi\right)\right.\\\left.\;\;\;\;\;\;\;\;\;\;\;\;\;\;\;\;\;\;\;\;\;\;-\exp\left(-\int_\rth\gamma(u(\xi))\delta_{[x+h,y(x+h,n)]}*\phi_\eps (\xi)d\xi\right)\right|\\=:I+II+III.
  \end{multline} 
  
  In order to estimate the integral term $ I $, we proceed splitting it in two integrals
\begin{multline*}
  	I\leq C(\Arrowvert\gamma\Arrowvert_\infty,K)\int_\Omega d\eta \left|\frac{1}{|x-\eta|^2}-\frac{1}{|x+h-\eta|^2}\right|\\\leq C\int_ {\Omega\cap\{|x-\eta|\leq 2|h|\}} d\eta \left|\frac{1}{|x-\eta|^2}-\frac{1}{|x+h-\eta|^2}\right|\\+C\int_ {\Omega\cap\{|x-\eta|> 2|h|\}} d\eta \left|\frac{1}{|x-\eta|^2}-\frac{1}{|x+h-\eta|^2}\right|.
  \end{multline*}
  If $ |x-\eta|\leq 2|h| $ then also $ |x+h-\eta|\leq 3|h| $ and hence 
  \begin{equation*}
  \int_ {\Omega\cap\{|x-\eta|\leq 2|h|\}} d\eta \left|\frac{1}{|x-\eta|^2}-\frac{1}{|x+h-\eta|^2}\right|\leq 2\int_{B_{3|h|}(0)}dy\frac{1}{|y|^2}=24 \pi|h|. \end{equation*}
  On the other hand, if $ |x-\eta|>2|h| $, then for $ 0<s<1 $,
  \begin{equation*}
  \left|\frac{1}{|x-\eta|^2}-\frac{1}{|x+h-\eta|^2}\right|\leq \frac{|h|^2+2|h||x-\eta|}{|x-\eta|^2|x+h-\eta|^2}\leq 2^{s-2}|h|^s\frac{1}{|x-\eta|^s|x+h-\eta|^2}.
 \end{equation*}
 Choosing now $ s=\frac{1}{2} $ we see that $ \frac{1}{|x-\cdot|^s}\in L^4(\Omega) $ and $ \frac{1}{|x+h-\cdot|^2}\in L^{\frac{4}{3}}(\Omega) $. Hence 
 \begin{equation*}
 \int_ {\Omega\cap\{|x-\eta|> 2|h|\}} d\eta \left|\frac{1}{|x-\eta|^2}-\frac{1}{|x+h-\eta|^2}\right|\leq C(\Omega)|h|^{\frac{1}{2}}.
 \end{equation*}
  Summarizing we get
   for a sufficiently small $|h|<1,$\begin{equation}\label{I}
  I\leq C(\Omega, \Arrowvert\gamma\Arrowvert_\infty,K)|h|^{\frac{1}{2}}.
\end{equation}

  For the second term $ II $ we use the following three estimates which are the consequence of the smoothness of $ \phi_\eps $
  \begin{equation}\label{path1}
  \int_0^{|\eta-x|}d\lambda \left|\phi_\eps\left(z-x-\lambda\frac{\eta-x}{|\eta-x|}\right)-\phi_\eps\left(z-x-h-\lambda\frac{\eta-x}{|\eta-x|}\right)\right|\leq C(\phi_\eps)|h| |\eta-x|;
  \end{equation}
  \begin{multline}\label{path2}
  \int_0^{|\eta-x|}d\lambda \left|\phi_\eps\left(z-x-h-\lambda\frac{\eta-x}{|\eta-x|}\right)-\phi_\eps\left(z-x-h-\lambda\frac{\eta-x-h}{|\eta-x-h|}\right)\right|\\\leq C(\phi_\eps) \frac{|x-\eta|}{2}\frac{\big|(\eta-x\pm h)|\eta-x-h|-(\eta-x-h)|\eta-x|\big|}{|\eta-x-h|}\\\leq C(\phi_\eps)\frac{|x-\eta|}{2} \left(\big||\eta-x-h|-|x-h|\big|+|h|\right)\leq C(\phi_\eps)|h||\eta-x|;
  \end{multline}and
  \begin{equation}\label{path3}
  \left|\int_{|\eta-x|}^{|\eta-x-h|}d\lambda\ \phi_\eps\left(z-x-h-\lambda\frac{\eta-x-h}{|\eta-x-h|}\right)\right|\leq C(\phi_\eps)|h|.
  \end{equation}
  Now, using the well-known inequality $ |e^{-a}-e^{-b}|\leq |a-b| $ for $a,b\geq 0$ and the definition of the line integrals as in \eqref{convolution} we see
  \begin{multline}\label{II}
  II\leq C(\Arrowvert\gamma\Arrowvert_\infty,\phi_\eps,K)\int_\Omega d\eta\  \frac{1}{|\eta-x-h|^2}
  \int_\rth dz\ \gamma(u)(z)\\\times\left|\int_0^{|\eta-x|}d\lambda\ \phi_\eps\left(z-x-\lambda\frac{\eta-x}{|\eta-x|}\right)-\phi_\eps\left(z-x-h-\lambda\frac{\eta-x-h}{|\eta-x-h|}\right)\right|\\\leq C(\Arrowvert\gamma\Arrowvert_\infty,\phi_\eps,K,\Omega)\Arrowvert \gamma\Arrowvert_\infty |h|,
  \end{multline}
  where in the last step we used all three estimates \eqref{path1}, \eqref{path2} and \eqref{path3}.  
  
  The last integral term $ III $ is estimated in a similar way as we did for $ II $. Since we assumed that $\bnd$ is $C^1$ and has positive curvature, we notice that there exists a constant $ C(\Omega) $ depending on the curvature of the domain, such that if $ |h|<1 $ is sufficiently small then \begin{equation}\label{curvature}
       \left|s(x,n)-s(x+h,n)\right|\leq C(\Omega)|h|^{\frac{1}{2}},
  \end{equation} for all $ n\in \mathbb{S}^2 $. 
Estimate \eqref{curvature} is the result of a geometrical argument considering the worst case scenario when $n$ is close to tangent to the boundary at the point $x-s(x,n)n$ or $x+h-s(x+h,n)n$ taking into account that the curvature of $\bnd$ is strictly positive.
  Hence,
   \begin{multline*}
\left|\int_0^{s(x,n)}d\lambda\ \phi_\eps(z-x+\lambda n)-\int_0^{s(x+h,n)}d\lambda\ \phi_\eps(z-x-h+\lambda n)\right|\\
 \leq \int_0^{\min\left(s(x,n),s(x+h,n)\right)}d\lambda \left|\phi_\eps(z-x+\lambda n)-\phi_\eps(z-x-h+\lambda n)\right|\\
 + \int_{\min\left(s(x,n),s(x+h,n)\right)}^{\max\left(s(x,n),s(x+h,n)\right)}d\lambda \left|\phi_\eps\right|
 \leq C(\phi_\eps, \Omega) |h|^{\frac{1}{2}}.
  \end{multline*}
  Hence, we conclude
  \begin{equation}\label{III}
  III\leq C(\Arrowvert G\Arrowvert_\infty,\Omega,\phi_\eps)\Arrowvert \gamma \Arrowvert_\infty |h|^{\frac{1}{2}}.
  \end{equation}
  
  Estimates \eqref{I}, \eqref{II} and \eqref{III} together imply the estimate $$ \left|\mathcal{B}^\eps(u)(x)-\mathcal{B}^\eps(u)(x+h)\right|\leq C\left(G,\gamma, K,\phi_\eps,\Omega\right) |h|^{\frac{1}{2}}, $$ for all $ x\in\Omega $ and $ |h|<1 $ sufficiently small. We have just proved then that $ \mathcal{B}^\eps $ maps continuous functions to H\"older continuous functions. 
  It is therefore a compact operator. As we have already noticed it is also a continuous operator. Then Schauder's fixed-point theorem implies the existence of a fixed-point $ u_\eps\in C\left(\Omega\right) $ with $ 0\leq u_\eps\leq K $ such that $ u_\eps=\mathcal{B}^\eps(u_\eps) $. This concludes the proof of the existence of a solution $ u_\eps $ for the regularized problem \eqref{regular}.
  In the next section, we will provide a general $L^2$ compactness theory based on some auxiliary meausres defined on $\stw$ to prove the existence of the original problem.

  \subsection{Compactness theory for operators defined by means of some line integrals}\label{sec.compactness.defect}
  We prove now Proposition \ref{compact}. 
  \begin{proof}[Proof of Proposition \ref{compact}] 
  Without loss of generality we can assume $L=1$ and $M=1$. We start writing $ \vphi_j $ in its Fourier series form as $$ \vphi_j=\sum_{k\in\pi\mathbb{Z}^3}a_k^j e^{ik\cdot x} .$$ We denote by $ \mu_j $ the measure associated to $ \vphi_j $ and defined by $$ \mu_j=\sum_{k\in\pi\mathbb{Z}^3} \left|a_k^j\right|^2 \delta_{\frac{k}{|k|}}\in \mathcal{M}_+\left(\stw\right) .$$ We will work with the auxiliary measures defined on $\stw$ given for $ R>0 $ by $$ \mu_j^R=\sum_{\substack{k\in\pi\mathbb{Z}^3\\ |k|>R}} \left|a_k^j\right|^2 \delta_{\frac{k}{|k|}}\in \mathcal{M}_+\left(\stw\right) .$$ Moreover we see that $$ \mu_j^R(\stw)\leq \mu_j(\stw)=\Arrowvert\vphi_j\Arrowvert_{L^2}\leq 8 ,$$ where we used that $ \left|\Pi^3\right|=8 $.	
 	
 	We can now rewrite using the absolute convergence of the series and computing the integrals
 	\begin{multline*}
 	L_n[\vphi_j](x)= \int_{-1}^1 d\lambda \sum_{k\in\pi\mathbb{Z}^3}a_k^j e^{ik\cdot (x-\lambda n)}\\=\sum_{k\in\pi\mathbb{Z}^3}a_k^j e^{ik\cdot x}\int_{-1}^1 d\lambda e^{-ik\cdot \lambda n}=\sum_{k\in\pi\mathbb{Z}^3}2a_k^j\frac{\sin(k\cdot n)}{k\cdot n} e^{ik\cdot x}\\
 	=\sum_{\substack{k\in \pi\mathbb{Z}^3\\|k|\leq R}}2a_k^j\frac{\sin(k\cdot n)}{k\cdot n} e^{ik\cdot x}+\sum_{\substack{k\in \pi\mathbb{Z}^3\\|k|> R}}2a_k^j\frac{\sin(k\cdot n)}{k\cdot n} e^{ik\cdot x}.
 	\end{multline*}
 	Since the first sum is finite, the first term on the right hand side is compact for every fixed $R>0$. We now consider the contribution due to the second term. We define the auxiliary measure associated to $ L_n[\vphi_j] $ that will be denoted by $\nu_{n,j}^R$. More precisely we define it by means of
 	\begin{equation}\label{nu1}
 	\nu_{n,j}^R(\omega)\eqdef \sum_{\substack{k\in \pi\mathbb{Z}^3\\|k|> R}}4\left|a_k^j\right|^2\left|\frac{\sin(k\cdot n)}{k\cdot n}\right|^2 \delta_{\frac{k}{|k|}}(\omega).
 	\end{equation}
 	Again we see $  \nu_{n,j}^R\in \mathcal{M}_+\left(\stw\right)  $ with $ \nu_{n,j}^R\left(\stw\right)\leq 32 $. Notice that $ \nu_{n,j}^R\leq 4\mu_j^R $ for all $ n\in\stw $ as a measure. 
  We notice also that by definition 
 	$ \nu_{n,j}^R\left.\right|_{\{\omega\cdot n=0\}}=4\mu_j^R\left.\right|_{\{\omega\cdot n=0\}} $ since $ \lim\limits_{x\to 0}\frac{\sin(x)}{x}=1 $. Moreover, we can write
 	\begin{equation}\label{decomposition1} 
 	\nu_{n,j}^{R}=\nu_{n,j}^{R}\left.\right|_{\{0\leq|\omega\cdot n|<\kappa\}}+\sum_{\substack{k\in \pi\mathbb{Z}^3\\|k|> {R}}}4\left|a_k^{j}\right|^2\left|\frac{\sin(k\cdot n)}{k\cdot n}\right|^2 \delta_{\frac{k}{|k|}}(\omega)\left.\right|_{\{|\omega\cdot n|\geq\kappa\}}.
 	\end{equation}  
 	On one hand we have $$ \nu_{n,j}^{R}\left.\right|_{\{0\leq|\omega\cdot n|<\kappa\}}\leq 4\mu_{j}^{R}\left.\right|_{\{0<|\omega\cdot n|<\kappa\}} $$ and also defining $ f_\kappa(\omega,n)= \chi_{\{\omega:0\leq|\omega\cdot n|<\kappa\}}(\omega)$ we have $$f_\kappa(\omega,n)=\chi_{\{(\omega,n):0\leq|\omega\cdot n|<\kappa\}}(\omega,n)= \chi_{\{n:0\leq|\omega\cdot n|<\kappa\}}(n)$$  hence we compute 
 	\begin{multline}\label{measure11}
 	\int_\stw dn\int_\stw d\nu_{n,j}^{R}(\omega)\left.\right|_{\{0<|\omega\cdot n|<\kappa\}}\leq 4\int_\stw dn \int_\stw d\mu_{j}^{R}(\omega)\chi_{\{(\omega,n):0<|\omega\cdot n|<\kappa\}}(\omega,n)\\= 4\int_\stw d\mu_{j}^{R}(\omega)  \int_\stw dn \chi_{\{n:0\leq\left|\omega\cdot n\right|<\kappa\}}(n)\leq 128 \pi \kappa\to 0
 	\end{multline}
 	uniformly in $j\in\mathbb{N}$ and $ R\in \pi\mathbb{N} $. For the first inequality we used that $ \nu_{n,j}^{R}\leq 4 \mu_{j}^{R} $, after that we changed the order of integration using the boundedness of the measures and we concluded using $  \mu_{j}^{R}\left(\stw\right)\leq 8 $ as well as $$ \int_\stw dn \chi_{\{0<\left|\omega\cdot n\right|<\kappa\}}(n)<4\pi\kappa.$$
 	
 	On the other hand we have for fixed $ \kappa>0 $\begin{multline}\label{measure21}
 	\int_\stw dn \int_\stw\sum_{\substack{k\in \pi\mathbb{Z}^3\\|k|> {R}}}4\left|a_k^{j}\right|^2\left|\frac{\sin(k\cdot n)}{k\cdot n}\right|^2 \delta_{\frac{k}{|k|}}(\omega)\left.\right|_{\{|\omega\cdot n|\geq\kappa\}}\\\leq\int_\stw dn \int_\stw\frac{4}{{R}^2\kappa^2}\sum_{\substack{k\in \pi\mathbb{Z}^3\\|k|> {R}}}\left|a_k^{j_l}\right|^2 \delta_{\frac{k}{|k|}}(\omega)\left.\right|_{\{|\omega\cdot n|\geq\kappa\}}\\\leq\int_\stw dn\int_\stw\frac{4}{R^2\kappa^2}d\mu_j(\omega)\left.\right|_{\{|\omega\cdot n|\geq\kappa\}}\leq \frac{128 \pi}{R^2\kappa^2} \underset{R\to\infty}{\longrightarrow}0.
 	\end{multline}
 	uniformly in $ j\in\mathbb{N} $. We used indeed that if $ |k|\omega=k $, $ |k|>R $ and $ |\omega\cdot n|>\kappa $, then $ |k\cdot n|>R\kappa $. Moreover, we can always bound the measure $ \mu_j^R\leq\mu_j $ and $ \mu_j(\stw)\leq 8 $.
 	
 	Hence, we conclude $ \int_\stw dn\int_\stw d\nu_{n,j_l}^{R}(\omega)\to 0 $ as $ R\to 0 $. Indeed, let $ \eps>0 $. We chose $ 0<\kappa_0< \frac{1}{256 \pi \eps} $. Then testing according to \eqref{measure21} we define $ R_0(\eps)> \frac{16\sqrt{\pi}}{\kappa_0\sqrt{\eps}} $ such that for all $ R\geq R_0 $ we have 
 	\begin{equation}\label{measure31}
 	\int_\stw dn\int_\stw d\nu_{n,j}^{R}\left.\right|_{\{|\omega\cdot n|\geq\kappa_0\}}(\omega)<\frac{\eps}{2} .
 	\end{equation}
 	Combining \eqref{measure11} for $ \kappa_0 $ and \eqref{measure31} we obtain
 	\begin{equation}\label{finalmeasure1}
 	0\leq\int_\stw dn\int_\stw d\nu_{n,j}^{R}(\omega)<\eps
 	\end{equation}
 	for all $ R\geq R_0 $ and most importantly for all $ j\in\mathbb{N} $.

  We are now ready to show the compactness of the sequence $ T_m[\vphi_{j}] $ in $ L^2\left(\Pi^3\right) $. Since $ T_m $ is a bounded operator, $\Pi^3 $ is a compact subset of $ \rth $, we only have to show the equi-integrability condition (cf. \cite{MR2759829}). We recall that $ \Arrowvert L_n[\vphi]\Arrowvert_\infty\leq 2 $. Hence, let $ x,h\in \Pi^3 $ using Jensen's inequality we compute
  \begin{multline}\label{lemmacompactness1}
  \left|T_m[\vphi_{j}](x)-T_m[\vphi_{j}](x+h)\right|^2=\left|\int_\stw dn\left[\left(L_n[\vphi_{j}](x)\right)^m-\left(L_n[\vphi_{j}](x+h)\right)^m\right]\right|^2\\
  \leq \left(m2^{m-1}\right)^2\left(\int_\stw dn\left|L_n[\vphi_{j}](x)-L_n[\vphi_{j}](x+h)\right|\right)^2\\
  \leq \left(4\pi m2^{m-1}\right)^2 \int_\stw dn\left|\sum_{k\in\pi\mathbb{Z}^3} 2a_k^{j}\frac{\sin(k\cdot n)}{k\cdot n}e^{ik\cdot x}\left(1-e^{ik\cdot h}\right)\right|^2.
  \end{multline}
 Since $ \left\{\frac{1}{8}e^{i k\cdot x}\right\}_{k\in \pi\mathbb{Z}^3} $ is an orthonormal basis of $ \Pi^3 $ we obtain denoting by $ C_m=8^24\pi m2^{m-1} $,
 \begin{multline}\label{lemmacompactness2}
 \int_{\Pi^3}dx\left|T_m[\vphi_{j}](x)-T_m[\vphi_{j}](x+h)\right|^2\\\leq \frac{C_m^2}{64} \int_{\Pi^3}dx\int_\stw dn\;\left|L_n[\vphi_{j}](x)-L_n[\vphi_{j}](x+h)\right|^2\\= C_m^2 \int_\stw dn\sum_{k\in\pi\mathbb{Z}^3} 4\left|a_k^{j}\right|^2\left|\frac{\sin(k\cdot n)}{k\cdot n}\right|^2\left|\left(1-e^{ik\cdot h}\right)\right|^2\\
 \leq 32\pi C_m^2 \sum_{\substack{k\in \pi\mathbb{Z}^3\\|k|\leq R}} 4 \left|a_k^{j}\right|^2 |k|^2 |h|^2
 + 32C_m^2 \int_\stw dn \int_\stw d\nu_{n,j}^R(\omega).
 \end{multline}
 Let $ \eps>0 $. We have shown that there exists $ R_0>0 $ such that $$ \int_\stw dn \int_\stw d\nu_{n,j}^R(\omega)<\frac{\eps}{64 C_m^2} ,$$ for all $ R\geq R_0 $ and for all $ j\in\mathbb{N} $. Taking in \eqref{lemmacompactness2} $ R=R_0 $ and $ h_0=\frac{\sqrt{\eps}}{C_m 32\sqrt{{2}} R_0 }$ we obtain the desired equi-integrability condition
 \begin{multline*}
 \int_{\Pi^3}dx\left|T_m[\vphi_{j}](x)-T_m[\vphi_{j}](x+h)\right|^2\\\leq C_m \int_{\Pi^3}dx\int_\stw dn\;\left|L_n[\vphi_{j}](x)-L_n[\vphi_{j}](x+h)\right|^2<\eps
 \end{multline*}
 for all $ |h|<h_0 $. 
 This concludes the proof of Proposition \ref{compact}.
\end{proof}
We can get a stronger result for the compactness of line integrals of functions depending also on the direction $n\in\stw$. We will use it in the proof of existence of solution to the equation containing also the scattering term.
\begin{corollary}\label{corollary.cpt1}
 Let $\Pi^3=[-L,L]^3$ and $ (\vphi(x,n)_j)_{j\in\mathbb{N}}\in C\left(\stw,L^2\left(\Pi^3\right)\cap L^\infty\left(\Pi^3\right)\right) $ be a sequence of periodic functions with $ \sup\limits_{n\in\stw}\Arrowvert \vphi_j(\cdot,n)\Arrowvert_{L^\infty(\Pi^3)}\leq M $. Assume also $ \Arrowvert \vphi_j(\cdot,n_1)-\vphi_j(\cdot,n_2)\Arrowvert_{L^\infty(\Pi^3)}\leq \sigma(d(n_1,n_2))\to 0$ uniformly in $ j\in\mathbb{N} $ if $d(n_1,n_2)\to 0 $, where $ d $ is the metric on the sphere and $\sigma\in C\left(\mathbb{R}_+,\mathbb{R}_+\right)$ with $\sigma(0)=0$ is a uniform modulus of continuity. For $ n\in\stw $ and $m\in\mathbb{N}$ we define the operators $ L_n $ and $T_m$ by
\begin{equation*}
L_n[\vphi](x,\omega)\eqdef \int_{-1}^{1}d\lambda\ \vphi(x-\lambda n,\omega) \;\;\;\text{and}\;\;\;T_m[\vphi](x)\eqdef \int_\stw dn \left(L_n[\vphi](x,n)\right)^m
\end{equation*}
Then for every $ m\in\mathbb{N} $ the sequence $ \left(T_m[\vphi_j]\right)_j $ is compact in $ L^2\left(\Pi^3\right) $.
\begin{proof} Without loss of generality we can assume again $L=1$ and $M=1$.
This statement is a corollary to Proposition \ref{compact} and the Besicovitch covering Lemma. Since $ \stw $ with the geodesic metric is a Riemannian Manifold of class greater than 2, it is also a directionally (1,C)-limited metric space for a fixed constant $ C>0 $. See \cite{federer}  for further reference. This implies that the Federer-Besicovitch covering Lemma (a generalization of the well-known Lemma in $ \mathbb{R}^n $) applies. Hence, for any family $ \mathcal{F}_\delta=\left\{B_\delta(n)\right\}_{n\in\stw} $ of balls with radius $ \delta<1 $ there exists subfamilies $ \mathcal{G}_k\subset \mathcal{F}_\delta $ for $ 1\leq k\leq 2C+1 $ consisting of disjoint balls such that $$ \stw\subset \bigcup_{k=1}^{2C+1}\bigsqcup_{B\in\mathcal{G}_k} B, $$ where $\bigsqcup$ denotes the disjoint union. Since $ \stw $ is compact there exists also a finite cover, i.e., the subfamilies $ \mathcal{G}_k $ are finite. Hence,  $$ \stw\subset \bigcup_{k=1}^{2C+1}\bigsqcup_{1\leq i\leq N(k,\delta)} B_\delta(n_{k,i}). $$ Let now $ \eps>0 $ and $ h\in\rth $. Similarly as in equation \eqref{lemmacompactness1} we estimate using first Jensen's inequality
\begin{multline}
 \int_{\Pi^3}dx\left|T_m[\vphi_{j}](x)-T_m[\vphi_{j}](x+h)\right|^2\\= \int_{\Pi^3}dx\left|\int_\stw dn\left[\left( \int_{-1}^{1}d\lambda\ \vphi_j(x-\lambda n,n)\right)^m-\left( \int_{-1}^{1}d\lambda\ \vphi_j(x+h-\lambda n,n)\right)^m\right]\right|^2\\
 \leq \frac{C_m}{4\pi}  \int_{\Pi^3}dx\left[\int_\stw dn \left|\int_{-1}^{1}d\lambda \left[\vphi_j(x-\lambda n,n)- \vphi_j(x+h-\lambda n,n)\right]\right|\right]^2\\
 \leq C_m  \int_{\Pi^3}dx\int_\stw dn \left|\int_{-1}^{1}d\lambda \left[\vphi_j(x-\lambda n,n)- \vphi_j(x+h-\lambda n,n)\right]\right|^2\\
 = C_m  \int_{\Pi^3}dx\int_{\bigcup_{k=1}^{2C+1}\bigsqcup_{i=1}^{N(k,\delta)} B_\delta(n_{k,i})} dn\left| \int_{-1}^{1}d\lambda \left[\vphi_j(x-\lambda n,n)- \vphi_j(x+h-\lambda n,n)\right]\right|^2\\
 \leq C_m \sum_{k=1}^{2C+1}  \int_{\Pi^3}dx\int_{\bigsqcup_{i=1}^{N(k,\delta)} B_\delta(n_{k,i})} dn \left|\int_{-1}^{1}d\lambda \left[\vphi_j(x-\lambda n,n)- \vphi_j(x+h-\lambda n,n)\right]\right|^2\\
 \leq C_m \sum_{k=1}^{2C+1}  \int_{\Pi^3}dx\sum_{i=1}^{N(k,\delta)}\int_{ B_\delta(n_{k,i})} dn \left|\int_{-1}^{1}d\lambda \left[\vphi_j(x-\lambda n,n_{k,i})- \vphi_j(x+h-\lambda n,n_{k,i})\right]\right|^2\\
 + C_m \sum_{k=1}^{2C+1}  \int_{\Pi^3}dx\sum_{i=1}^{N(k,\delta)}\int_{ B_\delta(n_{k,i})} dn 4 \sigma(\delta)^2\\
 \leq C_m N(k,\delta)\sum_{k=1}^{2C+1}\sum_{i=1}^{N(k,\delta)}\int_{\Pi^3}dx\int_{\stw} dn \left|\int_{-1}^{1}d\lambda \left[\vphi_j(x-\lambda n,n_{k,i})- \vphi_j(x+h-\lambda n,n_{k,i})\right]\right|^2\\
 + 4C_m (2C+1) \int_{\Pi^3}dx\int_\stw dn \sigma(\delta)^2,
\end{multline}
where in the last inequality we used that the balls $ \left\{B_\delta(n_{k,i})\right\}_{1\leq i\leq N(k,\delta)} $ are disjoint. We choose thus $ \delta_0>0 $ such that $ 4C_m (2C+1) \sigma(\delta_0)<\frac{\eps}{64} $. Lemma \ref{lemmacompactness1} with equation \eqref{actual.equiintegrability.torus} implies for any $ (k,i) $ with $ 1\leq k\leq 2C+1 $ and $ 1\leq i\leq N(k,\delta_0) $ the existence of some $ h_0(k,i) $ such that 
\begin{multline*}
\int_{\Pi^3}dx\int_{\stw} dn \left|\int_{-1}^{1}d\lambda \left[\vphi_j(x-\lambda n,n_{k,i})- \vphi_j(x+h-\lambda n,n_{k,i})\right]\right|^2\\<\frac{\eps}{2 C_m(2C+1)}\frac{1}{N(k,\delta_0)^2}
\end{multline*}
for all $ |h|<h_0(i,k) $ and for all $ j\in\mathbb{N} $.
Hence, choosing $ h_0=\min\limits_{\substack{1\leq k\leq 2C+1\\1\leq i\leq N(k,\delta_0)}} \left\{h_0(k,i)\right\} $ we conclude
\begin{equation*}
\int_{\Pi^3}dx\left|T_m[\vphi_{j}](x)-T_m[\vphi_{j}](x+h)\right|^2<\eps
\end{equation*}
for all $ |h|<h_0 $ and all $ j\in\mathbb{N} $. Hence, the sequence $ \left(T_m[\vphi_j]\right)_j $ is compact in $ L^2\left(\Pi^3\right) $.
\end{proof}
\end{corollary}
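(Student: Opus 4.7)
The plan is to reduce the statement to the already-proven Proposition \ref{compact} by freezing the direction argument of $\varphi_j$ on a finite cover of $\stw$, paying for the freezing with the uniform modulus of continuity $\sigma$. As in the proof of Proposition \ref{compact}, since $T_m$ is a bounded operator on $L^2(\Pi^3)$ (by the uniform $L^\infty$ bound and $|\Pi^3|<\infty$), it suffices to establish the equi-integrability condition
\begin{equation*}
\int_{\Pi^3}dx\,|T_m[\varphi_j](x)-T_m[\varphi_j](x+h)|^2<\varepsilon
\end{equation*}
for all $j$ and all sufficiently small $|h|$. A first application of Jensen's inequality and $|a^m-b^m|\leq m 2^{m-1}|a-b|$ for $|a|,|b|\leq 2$ bounds this integral (up to a constant $C_m$ depending only on $m$) by
\begin{equation*}
\int_{\Pi^3}dx\int_{\stw}dn\,\left|\int_{-1}^{1}d\lambda\,\bigl[\varphi_j(x-\lambda n,n)-\varphi_j(x+h-\lambda n,n)\bigr]\right|^2.
\end{equation*}

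Next, I would cover $\stw$ by disjoint sub-families of small balls. Since $\stw$ with the geodesic metric is a directionally $(1,C)$-limited Riemannian manifold, the Federer--Besicovitch covering lemma produces, for each $\delta>0$, a finite collection of points $\{n_{k,i}\}$ with $1\leq k\leq 2C+1$ and $1\leq i\leq N(k,\delta)$ such that $\stw\subset \bigcup_{k=1}^{2C+1}\bigsqcup_{i=1}^{N(k,\delta)} B_\delta(n_{k,i})$ with the inner union disjoint for each fixed $k$. On each ball $B_\delta(n_{k,i})$ I would replace $\varphi_j(\cdot,n)$ by $\varphi_j(\cdot,n_{k,i})$ inside the inner integrand, triggering an error controlled by $2\sigma(\delta)$ which is uniform in $j$ by assumption. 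This splits the right-hand side into a main term
\begin{equation*}
C_m\sum_{k,i}\int_{\Pi^3}dx\int_{B_\delta(n_{k,i})}dn\,|L_n[\varphi_j(\cdot,n_{k,i})](x)-L_n[\varphi_j(\cdot,n_{k,i})](x+h)|^2
\end{equation*}
plus a remainder bounded by a constant times $(2C+1)\sigma(\delta)^2$.

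At this point the main term contains, for each fixed $(k,i)$, the functions $\varphi_j(\cdot,n_{k,i})$ that depend only on the spatial variable, so Proposition \ref{compact} applies directly — its equi-integrability estimate \eqref{actual.equiintegrability.torus} gives, for any $\varepsilon'>0$, an $h_0(k,i)$ making the corresponding integral smaller than $\varepsilon'$ for all $j$ and all $|h|<h_0(k,i)$. The order of parameters is crucial: first pick $\delta=\delta_0$ small enough that the $\sigma(\delta)^2$ remainder is below $\varepsilon/2$; this fixes the finitely many directions $n_{k,i}$; then take $\varepsilon'$ so that summing over the (now finite) set of indices yields a total below $\varepsilon/2$; finally set $h_0=\min_{k,i} h_0(k,i)>0$, which is still a positive minimum because we are taking the minimum of finitely many strictly positive numbers.

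The only genuine obstacle is that $n$ appears both as the integration variable of $T_m$ and inside $\varphi_j$, so Proposition \ref{compact} cannot be invoked directly; this is resolved precisely by the freeze-and-cover argument above, which is only admissible because the modulus $\sigma$ is assumed uniform in $j$ — without this uniform modulus one could not decouple the two roles of $n$ in a way that preserves equi-integrability. Everything else is a careful bookkeeping of constants in the chain Jensen $\to$ Besicovitch splitting $\to$ finite sum of applications of Proposition \ref{compact}.
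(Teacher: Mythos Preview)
Your proposal is correct and follows essentially the same approach as the paper: freeze the direction argument on a Federer--Besicovitch cover of $\stw$, control the freezing error by the uniform modulus $\sigma$, and apply the equi-integrability estimate of Proposition~\ref{compact} to the finitely many frozen directions, choosing first $\delta_0$ and then $h_0=\min_{k,i}h_0(k,i)$. Your identification of the key obstacle (the dual role of $n$) and its resolution via the uniform modulus matches the paper's argument.
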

We extend now Proposition \ref{compact} to other more general type of operators involving line integrals. To this end we define for $\vphi\in L^\infty(\Pi^3)$ and $0\leq s<t\leq \frac{L}{2}$, $x\in\Pi^3$ and $n\in\stw$ the line integral
\begin{equation}\label{line integral}
L_{n,t-s}[\vphi](x)=\int_s^t d\lambda \vphi(x-\lambda n).
\end{equation}
Then the following lemma holds.
\begin{lemma}\label{general compact}
    Under the notation above let $ (\vphi_j)_{j\in\mathbb{N}}\in 
    L^\infty\left(\Pi^3\right) $ be a sequence of periodic functions with $ \Arrowvert \vphi_j\Arrowvert_\infty\leq M $. Let $\eps>0$, then there exists $h_0>0$ such that 
    \begin{multline}\label{equiintegrability general}
        \int_{\Pi^3}dx\left|\fint_\stw dn \left(L_{n,t-s}[\vphi_j](x)-L_{n,t-s}[\vphi_j](x+h)\right)\right|^2\\\leq \int_{\Pi^3}dx\fint_\stw dn \left|L_{n,t-s}[\vphi_j](x)-L_{n,t-s}[\vphi_j](x+h)\right|^2<\eps
    \end{multline}
    for all $|h|<h_0$ and $j\in\mathbb{N}$ and uniformly in $t,s$. This equi-integrability condition implies as in Proposition \ref{compact} the compactness of any sequence $\fint_\stw dn \left(L_{n,t-s}[\vphi_j](x)\right)^m$ in $L^2(\Pi^3)$ for any fixed $m\in\mathbb{N}$.
    \begin{proof}
        We expand the functions $\vphi_j$ in their respectively Fourier series as $\vphi_j(x)=\sum_{k\in\frac{\pi}{L}\mathbb{Z}^3} a_k^j e^{ik\cdot x}$, hence their associated auxiliary measures $\mu_j^R$ are given for $R\in \frac{\pi}{L}\mathbb{Z}$ by 
        \begin{equation}\label{measure}
\mu_j^R=\sum_{\substack{k\in\frac{\pi}{L}\mathbb{Z}^3\\|k|>R}} |a_k^j|^2\delta_{\frac{k}{|k|}}\in\mathcal{M}_+\left(\stw\right) . 
	\end{equation}
	For any $ s,t\in\left[0,\frac{L}{2}\right] $ with $ s<t $ we compute for $ k\in\frac{\pi}{L}\mathbb{Z}^3 $ and $ n\in\stw $
\begin{equation*}\int_s^t d\lambda e^{-ik\cdot\lambda n}=\frac{e^{-it(k\cdot n)}-e^{-is(k\cdot n)}}{-ik\cdot n}= 2e^{-i(t+s)\frac{(k\cdot n)}{2}}\frac{\sin\left((t-s)\frac{(k\cdot n)}{2}\right)}{k\cdot n}.
	\end{equation*}
	Therefore, the auxiliary measures associated to the operators $ L_{n,t-s} $ acting respectively on $\vphi_j $ are given by
	\begin{equation}\label{nut}
	\nu_{n,j}^{t-s,R}(\omega)= \sum_{\substack{k\in\frac{\pi}{2D+2}\mathbb{Z}^3\\|k|>R}} |a_k^j|^24\left|\frac{\sin\left((t-s)\frac{(k\cdot n)}{2}\right)}{k\cdot n}\right|^2\delta_{\frac{k}{|k|}}\in\mathcal{M}_+\left(\stw\right).
	\end{equation}
	
 Since $ \left|\frac{\sin(ax)}{x}\right|\leq a $ we notice that the auxiliary measures are uniformly bounded and satisfy $$ \nu_{n,j}^{t-s,R}\leq (t-s)^2\mu_{j}^R\leq \frac{L^2}{4}\mu_j^R\leq \frac{L^2}{4}\mu_j .$$
	Hence, exactly as we have argued in Proposition \ref{compact} we see also in this case that for any $ 0<\kappa<1 $ 
	\begin{equation*}
	\int_\stw dn\int_\stw d\nu_{n,j}^{t-s,R}(\omega)\chi_{\{0\leq |\omega\cdot n|<\kappa\}}\leq\frac{L^2}{4} C(L,M)\kappa\to 0,
	\end{equation*}
	as $ \kappa\to 0 $ uniformly in $ j\in\mathbb{N} $ and $ t,s\in\left[0,\frac{L}{2}\right] $. Moreover, estimating the sine function by $ 1 $ we also have for fixed $ \kappa>0 $
	\begin{equation*}
	\int_\stw dn\int_\stw d\nu_{n,j}^{t-s,R}(\omega)\chi_{\{|\omega\cdot n|\geq\kappa\}}\leq\frac{C(L,M)}{R^2 \kappa^2}\to 0,
	\end{equation*}
	s $ R\to\infty $ uniformly in $ j\in\mathbb{N} $ and $ t,s\in\left[0,\frac{L}{2}\right] $.
	Thus, we conclude once again that for any $ \eps>0 $ there exists some $ R_0(\eps)>0 $ (independent of $ j\in\mathbb{N} $ and $ t,s\in\left[0,\frac{L}{2}\right] $) such that
	\begin{equation*}
	\int_\stw dn\int_\stw d\nu_{n,j}^{t-s,R}(\omega)<\eps,
	\end{equation*}
	for all $ R\geq R_0 $, for all $ j\in\mathbb{N} $ and for all $ t,s\in\left[0,\frac{L}{2}\right] $.

 Let us define $ C_L=\left|\Pi^3\right|=(2L)^3 $. We can write for any $ t,s\in\left[0,\frac{L}{2}\right] $ with $ s<t $ using first Jensen's inequality and secondly that $ \left\{\frac{1}{C_L}e^{ik\cdot x}\right\}_{k\in \frac{\pi}{L}\mathbb{Z}^3} $ form an orthonormal basis of $ \Pi^3 $
	\begin{multline*}
	\int_{\Pi_D^3} \left|\fint_\stw dn\;L_{n,t-s}[\vphi_j](x)-L_{n,t-s}[\vphi_j](x+h)\right|^2\\
 \leq\int_{\Pi_D^3} \fint_\stw dn\left|L_{n,t-s}[\vphi_j](x)-L_{n,t-s}[\vphi_j](x+h)\right|^2\\\leq C_L^2\sum_{\substack{k\in\frac{\pi}{L}\mathbb{Z}^3\\|k|\leq R}} \fint_\stw dn\left|a_k^j\right|^2 4 \left|\frac{\sin\left((t-s)\frac{(k\cdot n)}{2}\right)}{k\cdot n}\right|^2 \left|1-e^{ik\cdot h}\right|^2+4C_L^2 \fint_\stw dn\int_\stw d\nu_{n,j}^{(t-s),R}(\omega)\\\leq C_L^3M^2 (t-s)^2R^2 |h|^2 + 4C_L^2 \fint_\stw dn\int_\stw d\nu_{n,j}^{(t-s),R}(\omega)\\\leq C_L^3M^2 \frac{L^2}{4}R^2 |h|^2 + 4C_L^2\fint_\stw dn\int_\stw d\nu_{n,j}^{(t-s),R}(\omega).
	\end{multline*}
	Hence, taking $R=R_0\left(\frac{\eps}{8C_L^2}\right)$ and $h_0=\frac{2\sqrt{\eps}}{MC_LR_0L\sqrt{C_L}}$ we conclude the desired equi-integrability result
 \begin{multline*}
	\int_{\Pi_D^3} \left|\fint_\stw dn\;L_{n,t-s}[\vphi_j](x)-L_{n,t-s}[\vphi_j](x+h)\right|^2\\
 \leq\int_{\Pi_D^3} \fint_\stw dn\left|L_{n,t-s}[\vphi_j](x)-L_{n,t-s}[\vphi_j](x+h)\right|^2<\eps
	\end{multline*}
 for all $|h|<h_0$ uniformly in$j\in\mathbb{N}$ and $ t,s\in\left[0,\frac{L}{2}\right] $.
    \end{proof}
\end{lemma}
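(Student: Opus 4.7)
The plan is to extend the Fourier-analytic scheme used in the proof of Proposition \ref{compact} to the partial line integrals $L_{n,t-s}$, with the essential requirement that every estimate must be uniform in $s,t \in [0,L/2]$. First, I would expand each $\vphi_j$ as a Fourier series on $\Pi^3$, writing $\vphi_j(x) = \sum_{k \in (\pi/L)\mathbb{Z}^3} a_k^j e^{ik\cdot x}$, and explicitly compute
\begin{equation*}
\int_s^t e^{-i\lambda (k \cdot n)}\,d\lambda = 2 e^{-i(s+t)(k\cdot n)/2}\, \frac{\sin((t-s)(k\cdot n)/2)}{k\cdot n}.
\end{equation*}
This produces Fourier coefficients for $L_{n,t-s}[\vphi_j]$ whose absolute values are $2|a_k^j| \cdot |\sin((t-s)(k\cdot n)/2)/(k\cdot n)|$.

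Mimicking the construction of $\nu_{n,j}^R$ from Proposition \ref{compact}, I would introduce the auxiliary measure on $\stw$ defined by
\begin{equation*}
\nu_{n,j}^{t-s,R} = \sum_{\substack{k \in (\pi/L)\mathbb{Z}^3 \\ |k|>R}} 4|a_k^j|^2 \left|\frac{\sin((t-s)(k\cdot n)/2)}{k\cdot n}\right|^2 \delta_{k/|k|},
\end{equation*}
and use the elementary bound $|\sin(ax)/x| \leq a$ to get $\nu_{n,j}^{t-s,R} \leq (t-s)^2 \mu_j^R \leq (L^2/4)\,\mu_j^R$, which is uniform in $s,t$. Then, exactly as in Proposition \ref{compact}, I would split this measure according to whether $|\omega \cdot n| < \kappa$ or $|\omega \cdot n| \geq \kappa$. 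The small region is absorbed into $(L^2/4)\,\mu_j^R|_{\{|\omega\cdot n|<\kappa\}}$, which vanishes uniformly via Fubini as $\kappa \to 0$; the large region is controlled by $|\sin(\cdot)/(k\cdot n)|^2 \leq 1/(R\kappa)^2$, giving a uniform bound that vanishes as $R\to\infty$.

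With these tools in hand, the equi-integrability estimate follows in the same spirit as before: Jensen's inequality converts the square of the spherical average into a spherical average of squares, and Parseval's identity on $\Pi^3$ yields
\begin{equation*}
\int_{\Pi^3} |L_{n,t-s}[\vphi_j](x+h) - L_{n,t-s}[\vphi_j](x)|^2 dx = C_L \sum_k 4|a_k^j|^2 \left|\frac{\sin((t-s)(k\cdot n)/2)}{k\cdot n}\right|^2 |1-e^{ik\cdot h}|^2.
\end{equation*}
Splitting the sum at $|k| \leq R$ (using $|1-e^{ik\cdot h}| \leq |k||h|$, which gives a contribution $\lesssim R^2 |h|^2$) and $|k| > R$ (where the auxiliary measure bound kicks in) produces the desired $\eps$ estimate by first choosing $R$ large enough and then $|h|$ small enough. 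The final compactness of $\fint_\stw (L_{n,t-s}[\vphi_j])^m\,dn$ in $L^2(\Pi^3)$ is a consequence of the Riesz--Fr\'echet--Kolmogorov criterion combined with the algebraic identity $|a^m - b^m| \leq m(|a|+|b|)^{m-1}|a-b|$ and the $L^\infty$ bound $|L_{n,t-s}[\vphi_j]| \leq ML$.

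The only real obstacle is maintaining uniformity in $s,t \in [0,L/2]$ at every step. This is handled by the observation that the dominating bound $\nu_{n,j}^{t-s,R} \leq (L^2/4)\mu_j^R$ depends on $s,t$ only through the bounded length $|t-s|$, so the choice of $R_0$ in the splitting argument can be made independently of $s,t$, and likewise $h_0$ depends only on $R_0$, $L$ and $M$. I do not expect this to cause genuine difficulty, but it is what must be tracked carefully throughout the argument.
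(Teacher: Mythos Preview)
Your proposal is correct and follows essentially the same approach as the paper: Fourier expansion of $\vphi_j$, explicit computation of the line-integral Fourier multiplier, construction of the auxiliary measures $\nu_{n,j}^{t-s,R}$ with the uniform bound $\nu_{n,j}^{t-s,R}\leq (t-s)^2\mu_j^R\leq (L^2/4)\mu_j^R$, the $\kappa$-splitting argument from Proposition~\ref{compact}, and then Jensen plus Parseval with the $|k|\le R$ versus $|k|>R$ decomposition. Your emphasis on tracking uniformity in $s,t$ via the bounded length $|t-s|\le L/2$ is exactly the point the paper makes as well.
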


\subsection{Proof of Theorems \ref{main theorem pseudo grey} and \ref{Grey thm}}\label{sec.grey proof}
We can now prove Theorems \ref{Grey thm} and \ref{main theorem pseudo grey}. A crucial step will be to adapt Proposition \ref{compact} in order to show the compactness of the operators $\mathcal{B}^\eps$ instead of the operator defined only by one line integral.
\begin{proof}[Proof of Theorem \ref{Grey thm}]
We first extend by $ 0 $ the function $ u_\eps $ and $ \gamma(u_\eps) $. Assuming without loss of generality that $ 0\in\Omega $, since $ \phi_\eps $ has compact support in $ B_\eps(0)\subset B_1(0) $ for all $ \eps<1 $ we see that $ \gamma(u_\eps)*\phi_\eps $ and $ u_\eps $ have both support contained in $ [-D-1,D+1]^3 $, where we denote by $ D=\text{diam}(\Omega) $. Let us extend periodically in $ \rth $ both functions $ u_\eps $ and $ \gamma(u_\eps)*\phi_\eps $ in $ \Pi_D^3\eqdef [-2D-2,2D+2]^3 $. Then we see that $$ \int_\Omega dx \int_\stw dn\int_0^{s(x,n)} u_\eps(x-rn) dr= \int_\Omega dx \int_\stw dn\int_0^D u_\eps(x-rn) dr. $$ With the same notation of Lemma \ref{general compact} we consider the operators $L_{n,r-s}[\vphi](x)$ acting on $ \vphi\in L^\infty (\Pi_D^3) $, $ n\in\stw $ and $ x\in \Pi_D^3 $ given by \eqref{line integral}. In the case $s=0$ we simplify the notation by $L_{n,r-0}[\vphi](x)=L_{n,r}[\vphi](x)$. Using \eqref{convolution} and the radial symmetry of $ \phi_\eps $ we see
	\begin{equation*}
	\int_\rth d\xi \gamma(u_\eps)(\xi)\delta_{[x,x-rn]}*\phi_\eps (\xi)=\int_0^r d\lambda \left(\gamma(u_\eps)*\phi_\eps\right)(x-\lambda n)=L_{n,r}\left[\gamma(u_\eps)*\phi_\eps\right](x).
	\end{equation*}
	Thus, we can write the operator $ \mathcal{B}^\eps(u_\eps) $ in the following way changing the variables according to $\eta=x-rn$
	\begin{multline}\label{polarB}
	\mathcal{B}^\eps(u_\eps)(x)=\int_0^Ddr \fint_\stw dn \left[\left(\gamma(u_\eps)*\phi_\eps\right)u_\eps\right](x-rn)\exp\left(-L_{n,r}\left(\gamma(u_\eps)*\phi_\eps\right)(x)\right)\\
	+\int_\stw dn\ G(n) \exp\left(-L_{n,s(x,n)}\left(\gamma(u_\eps)*\phi_\eps\right)(x)\right)\\
	=\int_0^Ddr \fint_\stw dn \left[\left(\gamma(u_\eps)*\phi_\eps\right)u_\eps\right](x-rn)\sum_{m=0}^{\infty} \frac{(-1)^m}{m!}\left(L_{n,r}\left(\gamma(u_\eps)*\phi_\eps\right)(x)\right)^m\\
	+ \int_\stw dn\ G(n) \sum_{m=0}^{\infty} \frac{(-1)^m}{m!}\left(L_{n,s(x,n)}\left(\gamma(u_\eps)*\phi_\eps\right)(x)\right)^m.
	\end{multline}
	Since the sequence $ \mathcal{B}^\eps(u_\eps) $ is uniformly bounded by $ K $ in $ L^\infty(\Omega) $, and thus by $ |\Omega|^{\frac{1}{2}} K $ in $ L^2(\Omega) $, for the compactness we need again to show only the equi-integrability. Let now $ h\in\rth $ with $ |h|<\frac{1}{2} $ and $ \eps<\frac{1}{2} $. Since we extended by $0$ the function $u_\eps=\mathcal{B}^\eps(u_\eps)$ outside $\Omega$ it is true that $\mathcal{B}^\eps(u_\eps)(x+h)=\mathcal{B}^\eps(u_\eps)(x+h)\chi_{\{{x+h\in\Omega}\}}(x)$. Hence, we multiply by this characteristic function also the integral definition of the operator as in \eqref{polarB}, this guarantees the well-definiteness of the function at $x+h\not\in\Omega$. We thus compute using Jensen's inequality
	\begin{multline*}
	\int_\Omega dx \left|\mathcal{B}^\eps(u_\eps)(x)-\mathcal{B}^\eps(u_\eps)(x+h)\right|^2\\
	\leq C \int_\Omega dx \fint_\stw dn\left|\int_0^Ddr\left[\left(\gamma(u_\eps)*\phi_\eps\right)u_\eps\right](x-rn)\sum_{m=0}^{\infty} \frac{(-1)^m}{m!}\left(L_{n,r}\left(\gamma(u_\eps)*\phi_\eps\right)(x)\right)^m\right.\\\left.-\int_0^Ddr\chi_{\{{x+h\in\Omega}\}}\left[\left(\gamma(u_\eps)*\phi_\eps\right)u_\eps\right](x+h-rn)\sum_{m=0}^{\infty} \frac{(-1)^m}{m!}\left(L_{n,r}\left(\gamma(u_\eps)*\phi_\eps\right)(x+h)\right)^m\right|^2\\
	+C(G) \int_\Omega dx \int_\stw dn\ G(n)\left|\sum_{m=0}^{\infty} \frac{(-1)^m}{m!}\left(L_{n,s(x,n)}\left(\gamma(u_\eps)*\phi_\eps\right)(x)\right)^m\right.\\\left.-\sum_{m=0}^{\infty} \frac{(-1)^m}{m!}\chi_{\{{x+h\in\Omega}\}}\left(L_{n,s(x+h,n)}\left(\gamma(u_\eps)*\phi_\eps\right)(x+h)\right)^m\right|^2.
	\end{multline*}
	Applying now the triangle inequality we can further estimate
	\begin{multline}\label{estimate0}
	\int_\Omega dx \left|\mathcal{B}^\eps(u_\eps)(x)-\mathcal{B}^\eps(u_\eps)(x+h)\right|^2\\
	\leq C \Arrowvert\gamma\Arrowvert_\infty^2 K^2 \int_\Omega dx\chi_{\{{x+h\in\Omega}\}}\int_0^Ddr \fint_\stw dn\left|\sum_{m=0}^{\infty} \frac{(-1)^m}{m!}\left(L_{n,r}\left(\gamma(u_\eps)*\phi_\eps\right)(x)\right)^m\right.\\\left.-\sum_{m=0}^{\infty} \frac{(-1)^m}{m!}\left(L_{n,r}\left(\gamma(u_\eps)*\phi_\eps\right)(x+h)\right)^m\right|^2\\
	+C\int_\Omega dx\chi_{\{{x+h\in\Omega}\}} \fint_\stw dn\left|\int_0^D dr\; \sum_{m=0}^\infty \frac{(-1)^m}{m!}\left(L_n,r\left(\gamma(u_\eps)*\phi_\eps\right)(x)\right)^m\right.\\\times \left[\left(\left(\gamma(u_\eps)*\phi_\eps\right)u_\eps\right)(x-rn)-\left(\left(\gamma(u_\eps)*\phi_\eps\right)u_\eps\right)(x+h-rn)\right]\bigg|^2\\
	+C(G) \int_\Omega dx \chi_{\{{x+h\in\Omega}\}}\fint_\stw dn \left|\sum_{m=0}^{\infty} \frac{(-1)^m}{m!}\left(L_{n,D}\left(\gamma(u_\eps)*\phi_\eps\right)(x)\right)^m\right.\\\left.-\sum_{m=0}^{\infty} \frac{(-1)^m}{m!}\left(L_{n,D}\left(\gamma(u_\eps)*\phi_\eps\right)(x+h)\right)^m\right|^2\\
	+ C(G) \int_\Omega dx \chi_{\{{x+h\in\Omega}\}}\fint_\stw dn\left\{\left[e^{\left(-L_{n,s(x,n)}\left(\gamma(u_\eps)*\phi_\eps\right)(x)\right)}-e^{\left(-L_{n,D}\left(\gamma(u_\eps)*\phi_\eps\right)(x)\right)}\right]^2\right.\\\left.+\left[e^{\left(-L_{n,s(x+h,n)}\left(\gamma(u_\eps)*\phi_\eps\right)(x+h)\right)}-e^{\left(-L_{n,D}\left(\gamma(u_\eps)*\phi_\eps\right)(x+h)\right)}\right]^2\right\}\\
	+\left(C(G)+K^2\right)\left|\{x\in\Omega:x+h\not\in\Omega\}\right|,
	\end{multline}
	where the term in the second line is obtained applying Jensen's inequality again and in the last term we estimate the exponential by $ 1 $.
	We notice that $$ \Arrowvert L_{n,r}[\gamma(u_\eps)*\phi_\eps]\Arrowvert_\infty\leq D \Arrowvert\gamma\Arrowvert_\infty, $$ for any $ 0\leq r\leq D $ and any $n\in\stw$. Hence, for any $ \delta>0 $ there exists some $ M>0 $ such that $$ 	\left\Arrowvert \sum_{m>M}^{\infty} \frac{(-1)^m}{m!}\left|L_{n,r}\left(\gamma(u_\eps)*\phi_\eps\right)(\cdot)\right|^{m}\right\Arrowvert^2_\infty<\frac{\delta}{2},  $$ for all $ r\in[0,D] $ and $n\in\stw$. Moreover, the smoothness of $\bnd$ implies that there exists a constant $ C(\Omega)>0 $ such that $$ \left|\{x\in\Omega:x+h\not\in\Omega\}\right|\leq C(\Omega)|h| .$$ In addition to that the convexity of the domain $\Omega$ and a geometric argument implies that, since $ \gamma(u_\eps)*\phi_\eps $ is supported in $ \overline{\Omega+B_\eps(0)} $, there exists a constant $ C(\Omega)>0 $ which depends on the curvature of $ \Omega $ such that $$ \sup\limits_{\substack{x\in\Omega\\n\in\stw}}\int_{s(x,n)}^D \gamma(u_\eps)*\phi_\eps(x-\lambda n)d\lambda \leq C(\Omega)\Arrowvert\gamma\Arrowvert_\infty \sqrt{\eps}, $$ where $\sqrt{\eps}$ is due to the set of directions $n\in\stw$ that are tangent to the boundary $\bnd$. Thus, using also the well-known inequality $ |e^{-b}-e^{-a}|\leq |a-b| $ for $a,b\geq 0$ we compute
	\begin{multline}\label{estimate1}
	\int_\Omega dx \left|\mathcal{B}^\eps(u_\eps)(x)-\mathcal{B}^\eps(u_\eps)(x+h)\right|^2\\
	\leq C(D,G)\Arrowvert\gamma\Arrowvert_\infty K^2 \sup_{n\in\stw}\sup_{0\leq r\leq D}\left\Arrowvert \sum_{m>M}^{\infty} \frac{(-1)^m}{m!}\left|L_{n,r}\left(\gamma(u_\eps)*\phi_\eps\right)(\cdot)\right|^{m}\right\Arrowvert^2_\infty\\
	+ C \Arrowvert\gamma\Arrowvert_\infty^2 K^2 \sum_{m=0}^M (M+1)\left(\frac{m}{m!}\right)^2\left(\Arrowvert \gamma\Arrowvert_\infty D\right)^{2(m-1)} \int_{\Pi_D^3} dx \int_0^Ddr \\\times \fint_\stw dn \left|L_{n,r}\left(\gamma(u_\eps)*\phi_\eps\right)(x)-L_{n,r}\left(\gamma(u_\eps)*\phi_\eps\right)(x+h)\right|^2\\
	+C  \int_{\Pi_D^3} dx \fint_\stw dn\sum_{m=0}^{M}\frac{M+1}{(m!)^2}\Arrowvert\gamma\Arrowvert_\infty^{2m}D^m\int_0^D d\lambda_1...\int_0^D d\lambda_m\left|\int_{\max_{0\leq i\leq M}(\lambda_i)}^Ddr\right.\\\times \big[ \left(\left(\gamma(u_\eps)*\phi_\eps\right)u_\eps\right)(x-rn)-\left(\left(\gamma(u_\eps)*\phi_\eps\right)u_\eps\right)(x+h-rn)\big]\bigg|^2\\
	+C(G) \sum_{m=0}^M (M+1) \left(\frac{m}{m!}\right)^2\left(\Arrowvert \gamma\Arrowvert_\infty D\right)^{2(m-1)} \int_{\Pi_D^3} \fint_\stw dn \left|L_{n,D}\left(\gamma(u_\eps)*\phi_\eps\right)(x)\right.\\\left.-L_{n,D}\left(\gamma(u_\eps)*\phi_\eps\right)(x+h)\right|^2+C(G,\Omega) \left(\Arrowvert\gamma\Arrowvert_\infty^2\eps+K^2|h|^2\right).
	\end{multline}
	In order to obtain these last estimates we used also that $ \Omega \subset \Pi_D^3 $ since we are considering non-negative integrands. Moreover, in order to obtain the term containing\begin{multline*}
\int_0^D d\lambda_1...\int_0^D d\lambda_m\left|L_{n,D-\max(\lambda)}[\left(\gamma(u_\eps)*\phi_\eps\right)u_\eps](x)\right.\\\left.- L_{n,D-\max(\lambda)}[\left(\gamma(u_\eps)*\phi_\eps\right)u_\eps](x+h)\right|^2
	\end{multline*} for $ \max(\lambda)=\max_{0\leq i\leq M}(\lambda_i) $ we changed the order of integration applying Fubini's Theorem and we saw recursively that
	\begin{multline}\label{fubini}
		\int_0^D dr\left(\int_0^r d\lambda\right)^m=\int_0^D dr\int_0^r d\lambda_1...\int_0^r d\lambda_m\\=\int_0^D d\lambda_1...\int_0^D d\lambda_m\int_{\max_{0\leq i\leq M}(\lambda_i)}^Ddr.
	\end{multline}
	Hence, applying Fubini's Theorem and afterwards Jensen's inequality we conclude
	\begin{multline*}
		\left|\int_0^D dr\left(\int_0^r d\lambda \left(\gamma(u_\eps)*\phi_\eps\right)(x-\lambda n) \right)^m \right.\\\times \left[\left(\left(\gamma(u_\eps)*\phi_\eps\right)u_\eps\right)(x-rn)-\left(\left(\gamma(u_\eps)*\phi_\eps\right)u_\eps\right)(x+h-rn)\right]\bigg|^2\\
		=	\left|\int_0^D d\lambda_1 \left(\gamma(u_\eps)*\phi_\eps\right)(x-\lambda_1 n)...\int_0^Dd\lambda_m\left(\gamma(u_\eps)*\phi_\eps\right)(x-\lambda_m n) \right.\\\times \int_{\max(\lambda)}^Ddr\left[\left(\left(\gamma(u_\eps)*\phi_\eps\right)u_\eps\right)(x-rn)-\left(\left(\gamma(u_\eps)*\phi_\eps\right)u_\eps\right)(x+h-rn)\right]\bigg|^2\\
		\leq D^m \Arrowvert\gamma\Arrowvert^{2m}\int_0^D d\lambda_1...\int_0^D d\lambda_m\left|\int_{\max(\lambda)}^Ddr\left[\left(\left(\gamma(u_\eps)*\phi_\eps\right)u_\eps\right)(x-rn)\right.\right.\\\left.-\left(\left(\gamma(u_\eps)*\phi_\eps\right)u_\eps\right)(x+h-rn)\right]\bigg|^2.
		\end{multline*}
  
	We apply now the modification of Proposition \ref{compact} as in Lemma \ref{general compact}. Let us take the sequence $ \eps_j\eqdef \frac{1}{j} $ for $ j\in\mathbb{N} $. As we can notice in equation \eqref{estimate1} we have to consider the operators $ L_{n,t} $ and $ L_{n,D-t} $ for some $ 0\leq t\leq D $ acting respectively on two different sequences of functions, i.e. $ \left(\gamma(u_{\eps_j})*\phi_{\eps_j}\right)_j $ respectively $ \left(\left[\gamma(u_{\eps_j})*\phi_{\eps_j}\right]u_{\eps_j}\right)_j $. In order to simplify the notation we write $ f_j $ instead of $ f_{\eps_j} $. We recall that these sequences are uniformly bounded. Young's convolution inequality implies indeed $$ \sup\limits_{j\geq 0}\Arrowvert\gamma(u_j)*\phi_j\Arrowvert_\infty\leq \Arrowvert \gamma\Arrowvert_\infty\  \text{  and  } \ \sup\limits_{j\geq 0}\Arrowvert\left(\gamma(u_j)*\phi_j\right)u_j\Arrowvert_\infty\leq K \Arrowvert \gamma\Arrowvert_\infty .$$ Hence, we can apply Lemma \ref{general compact} to these sequences.
	Let now $ \beta>0 $ be arbitrarily small. We consider the terms appearing in \eqref{estimate1}. The convergence of the exponential implies the existence of $ M_0(\beta)>0 $ such that $$  C(D,G)\Arrowvert\gamma\Arrowvert_\infty K^2 \sup_{n\in\stw}\sup_{0\leq r\leq D}\left\Arrowvert \sum_{m>M}^{\infty} \frac{(-1)^m}{m!}\left|L_{n,r}\left(\gamma(u_j)*\phi_j\right)(\cdot)\right|^{m}\right\Arrowvert^2_\infty<\frac{\beta}{4}, $$ for any $ M\geq M_0 $.
 Moreover, Lemma \ref{general compact} applied to $L_{n,r}\left[\gamma(u_j)*\phi_j\right](x)$, \\ $L_{n,D}\left[\gamma(u_j)*\phi_j\right](x)$, and $L_{n,D-\max(\lambda)}\left[\left(\gamma(u_j)*\phi_j\right)u_j\right](x)$ implies the existence of some $h_0(M_0,\beta)>0$ such that 
 \begin{multline}\label{grey.estimate}
    C \Arrowvert\gamma\Arrowvert_\infty^2 K^2 \sum_{m=0}^{M_0} (M_0+1)\left(\frac{m}{m!}\right)^2\left(\Arrowvert \gamma\Arrowvert_\infty D\right)^{2(m-1)} \int_{\Pi_D^3} dx \int_0^Ddr \\\times \fint_\stw dn \left|L_{n,r}\left(\gamma(u_j)*\phi_j\right)(x)-L_{n,r}\left(\gamma(u_j)*\phi_j\right)(x+h)\right|^2\\
	+C  \int_{\Pi_D^3} dx \fint_\stw dn\sum_{m=0}^{M_0} \frac{M_0+1}{(m!)^2}\Arrowvert\gamma\Arrowvert_\infty^{2m}D^m\int_0^D d\lambda_1...\int_0^D d\lambda_m\left|\int_{\max_{0\leq i\leq M_0}(\lambda_i)}^Ddr\right.\\\times \big[ \left(\left(\gamma(u_j)*\phi_j\right)u_j\right)(x-rn)-\left(\left(\gamma(u_j)*\phi_j\right)u_j\right)(x+h-rn)\big]\bigg|^2\\
	+C(G) \sum_{m=0}^{M_0} (M_0+1) \left(\frac{m}{m!}\right)^2\left(\Arrowvert \gamma\Arrowvert_\infty D\right)^{2(m-1)} \int_{\Pi_D^3} \fint_\stw dn \left|L_{n,D}\left(\gamma(u_j)*\phi_j\right)(x)\right.\\\left.-L_{n,D}\left(\gamma(u_j)*\phi_j\right)(x+h)\right|^2<\frac{\beta}{4}. 
 \end{multline}
 This is true because we are applying Lemma \ref{general compact} finitely many times, since the sum is finite. Moreover, the equi-integrability result of Lemma \ref{general compact} is uniform with respect to the length of the line along which we are integrating. It applies hence to all terms appearing in \eqref{grey.estimate}.
 Taking now in \eqref{estimate1} $$ J_0=\frac{4C(D,G)\Arrowvert\gamma\Arrowvert_\infty^2}{\beta}\;\; \text{ and }\;\; h_1<\frac{\sqrt{\beta}}{2K\sqrt{C(G,\Omega)}}, $$ we obtain 
	\begin{equation*}
	\int_\Omega dx \left|\mathcal{B}^{j}(u_{j})(x)-\mathcal{B}^{j}(u_{j})(x+h)\right|^2<\beta
	\end{equation*}
	for all $ j\geq J_0  $ and for all $ |h|<\min\left(h_0,h_1\right) $. The continuity of the functions $ u_{j} $ and the fact that for $ j<J_0 $ we have only finitely many elements of the sequence imply the existence of some $ 0<H_0\leq \min\left(h_0,h_1\right)$ such that
	\begin{equation*}
	\int_\Omega dx \left|\mathcal{B}^{j}(u_{j})(x)-\mathcal{B}^{j}(u_{j})(x+h)\right|^2<\beta
	\end{equation*}
	for all $ j\geq 0 $ and all $ |h|<H_0 $. Hence, the sequence  $ \left(\mathcal{B}^{j}(u_{j})\right)_{j\in\mathbb{N}} $ is compact in $ L^2 $ and there exists a subsequence $ \left(\mathcal{B}^{j_l}(u_{j_l})\right)_{l\in\mathbb{N}} $ and a function $ u\in L^2(\Omega)\cap L^\infty(\Omega) $ such that $ u_{j_l}=\mathcal{B}^{j_l}(u_{j_l})\to u $ both in $ L^2 $ and pointwise almost everywhere as $ l\to \infty $.   
	
	The uniformly boundedness of $u_{j_l}$ and also of $\gamma\left(u_{j_l}\right)$ implies the convergence in $L^p$ for $p<\infty$ of $\gamma\left(u_{j_l}\right)*\phi_{j_l}\to \gamma(u)$ as $l\to \infty$ and hence for a subsequence (say still $u_{j_l}$) the convergence holds also pointwise almost everywhere. Indeed, 
 \begin{multline*}
     \Arrowvert \gamma\left(u_{j_l}\right)*\phi_{j_l}-\gamma(u)\Arrowvert_p\\\leq\Arrowvert \gamma\left(u_{j_l}\right)*\phi_{j_l}-\gamma(u)*\phi_{j_l}\Arrowvert_p+\Arrowvert \gamma\left(u\right)*\phi_{j_l}-\gamma(u)\Arrowvert_p\\\leq  \Arrowvert \gamma\left(u_{j_l}\right)-\gamma(u)\Arrowvert_p \Arrowvert \phi_{j_l}\Arrowvert_1+ \Arrowvert \gamma\left(u\right)*\phi_{j_l}-\gamma(u)\Arrowvert_p\to 0 \;\;\text{ uniformly in }l,
 \end{multline*}
 where we used the Young's convolution inequality combined with the fact that $\phi_{j_l}$ are positive and with the dominated convergence. Finally another application of the dominated convergence theorem implies
	\begin{equation}
	u_{j_l}=\mathcal{B}^{j_l}(u_{j_l})\to u=\mathcal{B}(u) 
	\end{equation}
	pointwise almost everywhere as $l\to\infty$ and  $ u=\mathcal{B}(u) $ pointwise a.e. Hence, $u$ is the desired solution to \eqref{nablaFreducedu}.
\end{proof}
A direct corollary of the proof of Theorem \ref{thm full equation grey} is the following.
\begin{corollary}\label{corollary.cpt}
Let $ \{\varphi_j\}_{j\in\mathbb{N}} $ and $ \{\psi_j\}_{j\in\mathbb{N}} $ be two bounded sequences in $ L^\infty(\Omega) $ for $ \Omega\subset \rth $  bounded with $ C^1 $-boundary and strictly positive curvature. Let also $ f\in L^\infty(\stw) $ be non-negative. Then the sequences $$ \int_\stw dn \int_0^D dr \varphi_j(x-rn)\exp\left(-\int_0^r \psi_j(x-\lambda n)d\lambda\right)$$ and $$\int_\stw dn\;f(n)\exp\left(-\int_0^D \psi_j(x-\lambda n)d\lambda\right)$$ are compact in $ L^2(\Omega) $. In particular they are $L^2$-equiintegrable in the following way: For any $\eps>0$ there exists some $h_0>0$ such that for all $j\in\mathbb{N}$ and all $|h|<h_0$ both estimates holds
\begin{multline*}
    \int_\Omega dx \left|\int_\stw dn \int_0^D dr \left[\varphi_j(x-rn)\exp\left(-\int_0^r \psi_j(x-\lambda n)d\lambda\right)\right.\right.\\\left.\left.-\varphi_j(x+h-rn)\exp\left(-\int_0^r \psi_j(x+h-\lambda n)d\lambda\right)\right]\right|^2\\
    \leq 4\pi  \int_\Omega dx \int_\stw dn \left|\int_0^D dr \left[\varphi_j(x-rn)\exp\left(-\int_0^r \psi_j(x-\lambda n)d\lambda\right)\right.\right.\\\left.\left.-\varphi_j(x+h-rn)\exp\left(-\int_0^r \psi_j(x+h-\lambda n)d\lambda\right)\right]\right|^2    < \eps
\end{multline*}
and 
\begin{multline*}
    \int_\Omega dx \left|\int_\stw dn f(n) \left[\exp\left(-\int_0^D \psi_j(x-\lambda n)d\lambda\right)\right.\right.\\\left.\left.-\exp\left(-\int_0^D \psi_j(x+h-\lambda n)d\lambda\right)\right]\right|^2\\
    \leq 4\pi \Arrowvert f\Arrowvert_{L^{\infty}} \int_\Omega dx \int_\stw dn f(n)\left|\exp\left(-\int_0^D \psi_j(x-\lambda n)d\lambda\right)\right.\\\left.-\exp\left(-\int_0^
D \psi_j(x+h-\lambda n)d\lambda\right)\right|^2    < \eps.
\end{multline*}
\end{corollary}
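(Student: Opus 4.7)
The plan is to mimic verbatim the compactness argument carried out in the proof of Theorem \ref{Grey thm}, with the abstract sequences $\varphi_j$ and $\psi_j$ playing the roles of $\left[\gamma(u_{\eps_j})*\phi_{\eps_j}\right]u_{\eps_j}$ and $\gamma(u_{\eps_j})*\phi_{\eps_j}$ respectively, and $f(n)$ replacing $G(n)$. First I would extend $\varphi_j$ and $\psi_j$ by zero outside $\Omega$ and then periodically to the cube $\Pi_D^3$ with $D=\text{diam}(\Omega)$, which places us in the setting of Lemma \ref{general compact}; because of this zero extension the integrals $\int_0^D$ in the statement agree on $\Omega$ with those of the form $\int_0^{s(x,n)}$ appearing in Theorem \ref{Grey thm}. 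Since each sequence is uniformly bounded in $L^\infty(\Omega)$, and hence in $L^2(\Omega)$, only the equi-integrability estimate requires proof; $L^2$-compactness then follows from Kolmogorov's criterion as in the earlier argument.

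For the first sequence, setting $M_\psi \eqdef \sup_j \Arrowvert\psi_j\Arrowvert_{L^\infty}$, I would expand the exponential as a Taylor series, using the pointwise bound $\big|\int_0^r \psi_j(x-\lambda n)\,d\lambda\big| \leq M_\psi D$ to make the series absolutely and uniformly convergent. Given $\eps>0$, the tail beyond some index $M_0 = M_0(\eps)$ contributes less than $\eps/4$ to the $L^2$-oscillation uniformly in $j$ and $|h|$, since $(M_\psi D)^m/m!$ is summable. For each remaining $m \leq M_0$, the Fubini identity \eqref{fubini} rewrites the $m$-th term as an iterated line integral over $m+1$ independent variables, which, by multilinearity and the triangle inequality, reduces the oscillation $x \mapsto x+h$ to finitely many line-integral differences of the type controlled by Lemma \ref{general compact} applied to $\varphi_j$ or $\psi_j$ with segments of length at most $D$. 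Taking the minimum of the $h_0$'s produced by those finitely many applications yields the first equi-integrability estimate; the factor $4\pi$ and the conversion between $\fint_\stw$ and $\int_\stw$ drop out of Jensen's inequality exactly as in \eqref{estimate0}.

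For the second sequence the argument is strictly simpler, since there is no outer $\varphi_j$ factor and no radial integration. After expanding $\exp(-L_{n,D}[\psi_j](x))$ and truncating at level $M_0$, the oscillation reduces to finitely many terms of the form $\int_\stw dn\, f(n)\big(L_{n,D}[\psi_j](x)^m - L_{n,D}[\psi_j](x+h)^m\big)$. Factoring the algebraic identity $a^m - b^m = (a-b)\sum_{k=0}^{m-1} a^k b^{m-1-k}$ with $|a|,|b|\leq M_\psi D$ reduces the problem to a single line-integral difference of the type controlled by Lemma \ref{general compact} applied to $\psi_j$; the explicit constants $4\pi$ and $\Arrowvert f\Arrowvert_{L^\infty}$ in the stated bound arise from the weighted Cauchy--Schwarz inequality $\big|\int_\stw f(n)g(n)\,dn\big|^2 \leq 4\pi\Arrowvert f\Arrowvert_{L^\infty}\int_\stw f(n)|g(n)|^2\,dn$.

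The main obstacle, which is essentially bookkeeping, is to guarantee that the $m$-dependent prefactors incurred when applying Lemma \ref{general compact} termwise do not defeat the Taylor summability: each prefactor is polynomial in $m$ while the coefficient $(M_\psi D)^m/m!$ decays faster, so the total error bound from the first $M_0$ terms remains $O(1)$. One therefore fixes $M_0$ large enough that the tail contribution is below $\eps/2$ and afterwards chooses $h_0$ small enough that the finite-sum contribution is below $\eps/2$, thereby yielding the stated equi-integrability bounds and hence $L^2$-compactness of both sequences.
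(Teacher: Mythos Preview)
Your proposal is correct and follows essentially the same approach as the paper: the corollary is stated there as a direct consequence of the proof of Theorem~\ref{Grey thm}, and the argument you outline---zero extension and periodization to $\Pi_D^3$, Taylor expansion of the exponential, tail truncation at level $M_0$, the Fubini identity \eqref{fubini}, and termwise application of Lemma~\ref{general compact}---is exactly the machinery deployed in \eqref{estimate0}--\eqref{estimate1}. Your derivation of the constants $4\pi$ and $4\pi\Arrowvert f\Arrowvert_{L^\infty}$ via Jensen and weighted Cauchy--Schwarz is the right justification for the intermediate inequalities in the statement.
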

Combining this result with Corollary \ref{corollary.cpt1} we see that the following proposition holds.
\begin{proposition}\label{prop.cpt}
	Let $ \Omega\subset \rth $  bounded with $ C^1 $-boundary and strictly positive curvature. Let also $ \{\varphi_j(x,\omega)\}_{j\in\mathbb{N}}\subset C\left(\stw,L^\infty\left(\Omega\right)\right)  $ be uniformly bounded and satisfying the assumption of Corollary \ref{corollary.cpt1}; i.e., $$ \Arrowvert \vphi_j(\cdot,n_1)-\vphi_j(\cdot,n_2)\Arrowvert_{L^\infty(\Omega)}\leq \sigma(d(n_1,n_2))\to 0,$$ uniformly in $ j\in\mathbb{N} $ if $d(n_1,n_2)\to 0 $, where $ d $ is the metric on the sphere and $\sigma\in C\left(\mathbb{R}_+,\mathbb{R}_+\right)$ with $\sigma(0)=0$ is a uniform modulus of continuity.. Let $ \{\psi_j(x)\}_{j\in\mathbb{N}}\subset L^\infty\left(\Omega\right) $ be a uniform bounded sequence. Then the new sequence
	$$ \int_\stw dn \int_0^D dr \varphi_j(x-rn,n)\exp\left(-\int_0^r \psi_j(x-\lambda n)d\lambda\right)$$ is compact in $ L^2(\Omega) $.
	\begin{proof}
	We apply Federer-Besicovitch covering Lemma for the sphere $ \stw $ as we did in Corollary \ref{corollary.cpt1} to the result of Corollary \ref{corollary.cpt}.
	\end{proof}
\end{proposition}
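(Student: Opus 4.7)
The plan is to reduce to Corollary \ref{corollary.cpt} via the Federer–Besicovitch covering argument already used in Corollary \ref{corollary.cpt1}. Since the integral operator applied to $(\varphi_j,\psi_j)$ is uniformly bounded in $L^\infty(\Omega)$ (by $4\pi D \sup_{j,n}\|\varphi_j(\cdot,n)\|_\infty$) and $\Omega$ is bounded, compactness in $L^2(\Omega)$ follows from translation equi-integrability, i.e., from the estimate that for every $\eps>0$ there exists $h_0>0$ such that
\begin{equation*}
\int_\Omega dx\,\left|\Phi_j(x)-\Phi_j(x+h)\right|^2<\eps
\end{equation*}
for all $j\in\mathbb{N}$ and all $|h|<h_0$, where $\Phi_j$ denotes the integral expression in the statement. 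The role of the assumption on the $n$-regularity of $\varphi_j$ is to allow discretization of the $n$-variable up to a small error uniform in $j$.

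Given $\eps>0$, the first step is to fix $\delta>0$ such that $\sigma(\delta)$ is sufficiently small, and then to apply the Federer–Besicovitch covering lemma to the family of geodesic balls $\{B_\delta(n)\}_{n\in\stw}$ exactly as in Corollary \ref{corollary.cpt1}. This yields, for some constant $C>0$ depending only on $\stw$, subfamilies $\mathcal{G}_k$ ($1\leq k\leq 2C+1$) of disjoint balls with
\begin{equation*}
\stw\subset \bigcup_{k=1}^{2C+1}\bigsqcup_{i=1}^{N(k,\delta)}B_\delta(n_{k,i}).
\end{equation*}
On each ball $B_\delta(n_{k,i})$ I replace $\varphi_j(x-rn,n)$ by the frozen quantity $\varphi_j(x-rn,n_{k,i})$. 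The substitution produces an error bounded pointwise by $\sigma(\delta)$, which integrates to a contribution of order $\sigma(\delta)$ in the difference $\Phi_j(x)-\Phi_j(x+h)$.

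After this discretization, the problem reduces to controlling, for each of the finitely many fixed directions $n_{k,i}$, the $L^2$-translation differences of the sequences
\begin{equation*}
\int_{B_\delta(n_{k,i})}dn\int_0^D dr\,\varphi_j(x-rn,n_{k,i})\exp\Bigl(-\int_0^r \psi_j(x-\lambda n)d\lambda\Bigr),
\end{equation*}
and these lie in the setting of Corollary \ref{corollary.cpt} applied to the uniformly bounded sequences $\{\varphi_j(\cdot,n_{k,i})\}_{j\in\mathbb{N}}$ and $\{\psi_j\}_{j\in\mathbb{N}}$. Corollary \ref{corollary.cpt} supplies an $h_0(k,i)>0$ making each of these contributions smaller than $\eps/(2 N_{\mathrm{tot}})$, where $N_{\mathrm{tot}}=\sum_{k,i} 1$ is the total (finite) number of balls. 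Choosing $h_0=\min_{k,i}h_0(k,i)$ and $\delta$ small enough that the aggregate $\sigma(\delta)$-error is $<\eps/2$ yields the claimed equi-integrability.

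The only conceptual subtlety, and therefore the main point where one needs to be careful, is that the weight $\psi_j$ is held fixed across the discretization in $n$: the $n$-freezing is performed only on the amplitude $\varphi_j(\cdot,n)$, while the exponential factor still depends on the genuine $n\in B_\delta(n_{k,i})$. This is exactly the configuration handled by Corollary \ref{corollary.cpt}, so no new compactness argument is required beyond that corollary and the covering step. Everything else is the routine combination of Jensen's inequality, the triangle inequality and the uniform $L^\infty$-bounds on $\varphi_j$ and $\psi_j$.
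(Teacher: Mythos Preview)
Your proof is correct and follows the same approach as the paper: the paper's proof is the one-line remark ``apply the Federer--Besicovitch covering lemma for $\stw$ as in Corollary~\ref{corollary.cpt1} to the result of Corollary~\ref{corollary.cpt},'' and your argument spells out precisely this combination---discretizing the $n$-dependence of $\varphi_j$ via a finite cover of $\stw$, controlling the freezing error by $\sigma(\delta)$, and then invoking the strengthened equi-integrability estimate of Corollary~\ref{corollary.cpt} (with the $\stw$-integral outside the square) on each of the finitely many frozen sequences $\{\varphi_j(\cdot,n_{k,i})\}_j$.
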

This completes the proof of the existence of solutions to the case of Grey approximation. Then we can further use this result to prove our main theorem (Theorem \ref{main theorem pseudo grey}) for the pseudo Grey case as follows.

   \begin{proof}[Proof of Theorem \ref{main theorem pseudo grey}]
       We expect for the pseudo Grey case the same result to hold. Let $$\alpha_\nu(T(x))=Q(\nu) h(T(x)),$$ for some non-negative bounded function $Q(\nu)$. Then define $$u(x)=4\pi\int_0^\infty d\nu\ Q(\nu) B_\nu(T(x))=F(T(x)).$$ 
We notice that by the monotonicity in of $B_\nu(T)$ in $T$ also $F$ is monotone with respect to $T$ and hence $T(x)=F^{-1}(u)(x)$. Denoting by $\gamma=h(F^{-1})$ and by $f_\nu=B_\nu(F^{-1})$ we see that $u$ solves
        \begin{multline}\label{nablaFreducedu1}
  u(x) =\int_0^\infty d\nu\int_\Omega d\eta \ Q(\nu)^2\gamma(u(\eta))f_\nu(u(\eta) )\exp\left(-\int_{[x,\eta]}Q(\nu)\gamma(u\left(\zeta\right))d\zeta\right)\\+\int_0^\infty d\nu\int_{\mathbb{S}^2}dn \ \exp\left(-\int_{[x,y(x,n)]}Q(\nu)\gamma(u(\zeta))d\zeta\right)Q(\nu) g_\nu(n).
  \end{multline}
  We regularize this equation in the same way as in Section \ref{sec.regular} and \eqref{regular} and obtain for $\phi_\eps$ a standard positive and symmetric mollifier the following fixed-point equation.
  \begin{multline}\label{pseudo1}
      u(x)=\mathcal{B}_\eps(u)(x)=\int_0^\infty d\nu\int_\Omega d\eta \ Q(\nu)^2\left(\gamma(u)*\phi_\eps\right)(\eta)f_\nu(u(\eta) ) \\\times\exp\left(-\int_{[x,\eta]}Q(\nu)\left(\gamma(u)*\phi_\eps\right)(\zeta)d\zeta\right)\\+\int_0^\infty d\nu\int_{\mathbb{S}^2}dn \ \exp\left(-\int_{[x,y(x,n)]}Q(\nu)\left(\gamma(u)*\phi_\eps\right)(\zeta)d\zeta\right)Q(\nu) g_\nu(n).
  \end{multline}Then the $L^\infty$-estimate and the equicontinuity (or more precisely uniform H\"older continuity) of the right-hand side of \eqref{pseudo1} hold once more in the same way as in Subsection \ref{sec.grey proof}. Also, $\mathcal{B}_\eps$ is a continuous operator. 
  We hence have solutions $u_\eps$. We need to show the compactness of the sequence of regularized solutions $u_j$, where $\eps=\frac{1}{j}$ . We consider similarly as in the proof of Theorem \ref{Grey thm} the line operators acting on some suitable sequence given for $r\in[0,D]$ by
  \begin{equation*}
      Q(\nu)L_{n,r}\left(\gamma(u_j)*\phi_j\right)(x) \;\;\text{ and }\;\;L_{n,D-r}\left(\gamma(u_j)*\phi_j\int_0^\infty Q(\nu)B_\nu\left(F^{-1}(u)\right)d\nu\right)(x).
  \end{equation*}
  By the definition of $u$ obtain the following uniform estimate
  $$\sup_{x\in\Omega}\left|\gamma(u_j)*\phi_j(x)\int_0^\infty Q(\nu)B_\nu\left(F^{-1}(u_j)\right)(x)d\nu\right|\leq \Arrowvert\gamma\Arrowvert_\infty \Arrowvert u\Arrowvert_\infty,$$
  where $\Arrowvert u\Arrowvert_\infty$ is the uniform upper bound of $u_j$.
  Hence, we can write as before the operator $\mathcal{B}_j$ in polar coordinates according to
  \begin{multline*}
  \mathcal{B}_j(u_j)(x)=\int_0^\infty d\nu Q(\nu)\int_\stw dn\int_0^{s(x,n)} dr  \left[\left(\gamma(u_j)*\phi_j\right)Q(\nu)f_\nu(u_j)\right](x-rn) \\\times\exp\left(-Q(\nu)L_{n,r}\left(\gamma(u_j)*\phi_j\right)(x)\right)\\+\int_0^\infty d\nu\int_{\mathbb{S}^2}dn \ \exp\left(-Q(\nu)L_{n,s(x,n)}\left(\gamma(u_j)*\phi_j\right)(x)\right)Q(\nu) g_\nu(n).    
  \end{multline*}
  Thus, using the boundedness of $Q$ and the estimate $\int_0^\infty Q^{m+1}(\nu)f_\nu(x)\leq \Arrowvert Q\Arrowvert_\infty^m \Arrowvert u\Arrowvert_\infty$ similarly as we did for equation \eqref{estimate1} we can obtain
\begin{multline}\label{estimate2}
	\int_\Omega dx \left|\mathcal{B}_j(u_j)(x)-\mathcal{B}_j(u_j)(x+h)\right|^2\\
	\leq C(D,\Omega,g)\Arrowvert\gamma\Arrowvert^2_\infty \Arrowvert Q\Arrowvert^2_\infty\Arrowvert u_j\Arrowvert^2_\infty \sup_{\substack{\nu\geq0\\n\in\stw\\0\leq r\leq D}}\left\Arrowvert \sum_{m>M}^{\infty} \frac{(-1)^m}{m!}Q^m(\nu)\left|L_{n,r}\left(\gamma(u_j)*\phi_j\right)(\cdot)\right|^{m}\right\Arrowvert^2_\infty\\
	+ C \Arrowvert\gamma\Arrowvert_\infty^2\Arrowvert u_j\Arrowvert_\infty^2 \sum_{m=0}^M (M+1)\left(\frac{m}{m!}\right)^2\left(\Arrowvert \gamma\Arrowvert_\infty D\right)^{2(m-1)}\Arrowvert Q\Arrowvert_\infty^{2m+2} \int_{\Pi_D^3} dx \int_0^Ddr \\\times \int_\stw dn \left|L_{n,r}\left(\gamma(u_j)*\phi_j\right)(x)-L_{n,r}\left(\gamma(u_j)*\phi_j\right)(x+h)\right|^2\\
	+C  \int_{\Pi_D^3} dx \int_\stw dn\sum_{m=0}^{M} \frac{M+1}{(m!)^2}\Arrowvert\gamma\Arrowvert_\infty^{2m}D^m\int_0^D d\lambda_1...\int_0^D d\lambda_m\left|\int_{\max_{0\leq i\leq M}(\lambda_i)}^Ddr\int_0^\infty d\nu\right.\\\times \big[ \left(\left(\gamma(u_j)*\phi_j\right)Q^{m+2}(\nu)f_\nu(u_j)\right)(x-rn)-\left(\left(\gamma(u_j)*\phi_j\right)Q^{m+2}(\nu)f_\nu(u_j)\right)(x+h-rn)\big]\bigg|^2\\
	+C(g_\nu) \sum_{m=0}^M (M+1) \left(\frac{m}{m!}\right)^2\left(\Arrowvert \gamma\Arrowvert_\infty D\right)^{2(m-1)} \Arrowvert Q\Arrowvert_\infty^{2m+2}\int_{\Pi_D^3} \int_\stw dn\bigg|L_{n,D}\left(\gamma(u_j)*\phi_j\right)(x)\\-L_{n,D}\left(\gamma(u_j)*\phi_j\right)(x+h)\bigg|^2+C(g_\nu,Q,\Omega) \left(\Arrowvert\gamma\Arrowvert_\infty^2\frac{1}{j}+\Arrowvert u\Arrowvert_\infty^2|h|^2\right),
	\end{multline}
where we used the triangle inequality as we did in \eqref{estimate0}. In addition, for the tails of the exponential terms in the estimate, we use the supremum norm and use that $\int_0^\infty Q(\nu)f_\nu(u_j)=u_j$. For the terms involving the finite difference of powers of line integrals we argue as we did in \eqref{estimate1} taking the absolute value inside the integrals, estimating each term using the boundedness of $Q(\nu)$ and the integrability of $f_\nu$ and applying Jensen's inequality in the end. The term in the fifth line of \eqref{estimate2} is obtained using the identity \eqref{fubini} given by Fubini's theorem and changing the order of integration so that the integral with respect to $\nu$ is the most interior one. Hence, we conclude with Jensen's inequality. The last term in \eqref{estimate2} is obtained exactly as the last term in \eqref{estimate1}.
 We conclude the compactness of the sequence $u_j=\mathcal{B}_j(u_j)$ in $L^2$ as we did in the proof of Theorem \ref{Grey thm}. 
 We hence fix first of all the $M_0>0$ such that the first term in the right hand side of \eqref{estimate2} is smaller than $\frac{\beta}{5}$ for an arbitrarily small $\beta>0$. This is possible because $$\sup_{\substack{\nu\geq0,\ n\in\stw\\0\leq r\leq D,\ x\in\Omega}}\left|Q(\nu)L_{n,r}\left(\gamma(u_j)*\phi_j\right)(x)\right|\leq D\Arrowvert Q\Arrowvert_\infty\Arrowvert\gamma\Arrowvert_\infty.$$ After that, since the sequence $\gamma(u_j)*\phi_j(x)\int_0^\infty Q^{m+2}(\nu)B_\nu\left(F^{-1}(u_j)\right)(x)d\nu$ is uniformly bounded for all $m\leq M_0+1$, all arguments in the proof of Theorem \ref{Grey thm} still apply and hence the line itegral of this sequence is also equi-integrable. Hence, arguing in the same way as in the proof of Theorem \ref{Grey thm} we see that a subsequence $u_{j_l}$ converges pointwise almost everywhere to the desired solution $u=\mathcal{B}(u)$.
   \end{proof}

\section{Full equation with both scattering and emission-absorption}\label{sec.full}

In this section we consider the full equation with both scattering and emission-absorption terms. We study the case when the scattering coefficient and the absorption coefficient depend on the local temperature $T(x)$. The radiative transfer equation can be written as
\begin{multline}\label{full rte}
    n\cdot \nabla_x I_\nu(x,n)=\alpha_\nu^a\left(T(x)\right)\left(B_\nu\left(T(x)\right)-I_\nu(x,n)\right)\\+\alpha_\nu^s\left(T(x)\right)\left[\left(\int_{\stw}dn'\; K(n,n')I_\nu(x,n')\right)-I_\nu(x,n)\right].
\end{multline}
We consider as in the previous sections equation \eqref{full rte} coupled with the condition of divergence-free total flux in equation \eqref{heat equation} and the incoming boundary condition \eqref{incoming boundary}. We will consider in this paper only the case of isotropic scattering, i.e. the case where the scattering kernel is invariant under rotation.

We notice first of all that the isotropic property of the scattering kernel implies its symmetry.
\begin{lemma}\label{symmetry K}
Let $K(n,n')$ be rotation invariant, i.e. $K(n,n')=K(Rn,Rn')$ for all $n,n'\in\stw$ and $R\in SO(3)$. Then $K(n,n')=K(n',n)$.
\begin{proof}
    Let $n,n'\in\stw$. We denote by $\theta\in[0,\pi]$ the angle formed by $n,n'$ on the plane spanned by these unit vectors. We denote moreover by $R_\theta\in SO(3)$ the rotation matrix defined by a rotation of $\pi$ around the bisectrix of $\theta$ lying in the plane spanned by $n$ and $n'$. Then we see that $R_\theta n=n'$ and $R_\theta n'=n$. Hence by assumption, $K(n,n')=K\left(R_\theta n',R_\theta n\right)=K(n',n)$. 
\end{proof}
\end{lemma}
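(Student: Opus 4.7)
The strategy reduces the lemma to the following one-line observation: for any pair $n, n' \in \stw$ I will exhibit a rotation $R \in SO(3)$ that swaps the two vectors, i.e. $Rn = n'$ and $Rn' = n$. Once such $R$ is in hand, the invariance hypothesis gives $K(n, n') = K(Rn, Rn') = K(n', n)$, which is exactly the claim. So the whole proof becomes a geometric construction on $\stw$.

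For the generic case $n \neq \pm n'$ I would take $R$ to be the rotation by angle $\pi$ whose axis is the unit bisector $b := (n+n')/|n+n'|$ lying in the plane $\Pi$ spanned by $n$ and $n'$. To verify that $R$ swaps $n$ and $n'$, decompose each vector into its component along $b$ and its component perpendicular to $b$: using $|n+n'|^2 = 2(1 + n\cdot n')$ one gets $(n\cdot b)b = \tfrac{1}{2}(n+n') = (n'\cdot b)b$, so the perpendicular parts are $n_\perp = \tfrac{1}{2}(n-n')$ and $n'_\perp = -n_\perp$. Since the rotation by $\pi$ about $b$ fixes the component along $b$ and negates the perpendicular one, $Rn = \tfrac{1}{2}(n+n') - \tfrac{1}{2}(n-n') = n'$, and symmetrically $Rn' = n$.

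The remaining cases are immediate. If $n = n'$ the identity is tautological. If $n = -n'$ the previous formula for $b$ degenerates, but then any rotation by $\pi$ about any axis perpendicular to $n$ sends $n$ to $-n = n'$ and vice versa, so a swapping $R$ still exists. There is no serious analytic obstacle here; the only thing to watch is the antipodal degenerate case, which is dispatched separately. This completes the proof plan.
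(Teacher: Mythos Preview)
Your proof is correct and follows essentially the same approach as the paper: both construct the rotation by $\pi$ about the bisector of $n$ and $n'$ to swap the two vectors, then invoke the invariance hypothesis. Your treatment is in fact slightly more careful, since you handle the degenerate cases $n = n'$ and $n = -n'$ explicitly (where the plane spanned by $n,n'$ or the bisector is not well-defined), and you verify the swapping property by an explicit decomposition rather than by assertion.
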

\subsection{Main result in the case of the Grey approximation}\label{Sec.full.grey}
We consider first the case of Grey approximation, i.e. we assume the coefficients and the scattering kernel to be independent of the frequency and we denote $\alpha_\nu^a=\alpha^a$ and $\alpha_\nu^s=\alpha^s$.
We will now prove a theorem about the existence of solutions to \eqref{full rte} similar to Theorem \ref{Grey thm}. The main difference with the setting of Theorem \ref{Grey thm} is the presence of the scattering operator. In the pure emission-absorption case the motion of the photons between one emission and the next absorption is rectilinear. On the contrary, in the presence of scattering, the photons move along a polygonal path between emission and absorption events. In order to take it into account we will define suitable Green functions that incorporate the polygonal motion due to the scattering. Using these Green functions it will be possible to find a fixed-point equation for the temperature analogous to \eqref{nablaFreduced} that includes the non-rectilinear motion between emission and absorption events (cf. equation \eqref{fixedpointfull}).
We show the following theorem.
\begin{theorem}\label{thm full equation grey} Let $\Omega\subset \rth$ be bounded, convex and open with $C^1$-boundary and strictly positive curvature. 
Let $ \alpha^a $ and $ \alpha^s $ be positive and bounded $C^1$-functions of the temperature, independent of the frequency. Assume $ K\in C^1\left(\stw\times\stw\right) $ be non-negative, rotationally symmetric and independent of the frequency with the property \eqref{scattering assume}. 
Then there exists a solution $ (T,I_\nu) \in L^\infty(\Omega)\times L^\infty\left(\Omega, L^\infty\left(\stw,L^1(\mathbb{R}_+)\right)\right)$ to the equation \eqref{full rte} coupled with \eqref{heat equation} satisfying the boundary condition \eqref{incoming boundary}, where the $I_\nu$ is a solution to \eqref{full rte} in the sense of distribution. 
\end{theorem}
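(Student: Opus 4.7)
The plan is to reduce the full problem to a non-local integral equation for $u(x) = 4\pi\sigma T^4(x)$ analogous to \eqref{nablaFreducedu}, by first integrating out the frequency variable and then inverting the transport-plus-scattering operator via a Green function. Since in the Grey case the coefficients are independent of $\nu$, summing \eqref{full rte} over $\nu\in\mathbb{R}_+$ and setting $J(x,n) \eqdef \int_0^\infty I_\nu(x,n)\,d\nu$ and $G(n)\eqdef \int_0^\infty g_\nu(n)\,d\nu$ yields
\begin{equation*}
n\cdot\nabla_x J + (\alpha^a(T)+\alpha^s(T))\,J = \alpha^a(T)\,\sigma T^4 + \alpha^s(T)\!\int_\stw K(n,n')J(x,n')\,dn',
\end{equation*}
with $J|_{\bnd,\, n\cdot n_x<0}=G(n)$. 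Combined with $\nabla_x\cdot \mathcal{F}=0$ this gives a closed system for $T$.

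Next I would construct, as announced in Subsection \ref{Sec.full.green}, a fundamental solution inverting $\mathcal{L}J \eqdef n\cdot\nabla_x J + (\alpha^a+\alpha^s)J - \alpha^s \int_\stw K(n,n')J(x,n')dn'$ with zero incoming boundary data. Writing $\mathcal{L} = \mathcal{T} - \mathcal{S}$ where $\mathcal{T}$ is the streaming-plus-extinction part and $\mathcal{S}$ the scattering gain, the Duhamel expansion $\mathcal{L}^{-1} = \sum_{k\geq 0}(\mathcal{T}^{-1}\mathcal{S})^k\mathcal{T}^{-1}$ has $k$-th term given by an iterated line integral along a polygonal path with $k$ scattering bends and weight $K(n_1,n_2)\cdots K(n_{k-1},n_k)$. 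Since $\int_\stw K(n,\cdot)dn=1$ by \eqref{scattering assume} and the ratio $\alpha^s/(\alpha^a+\alpha^s)$ is strictly smaller than one uniformly in $T$ (as $\alpha^a>0$ and both coefficients are bounded), each application of $\mathcal{T}^{-1}\mathcal{S}$ contracts in $L^\infty$ by a factor bounded away from $1$, so the series converges geometrically. Substituting the resulting representation of $J$ into $\nabla_x\cdot \mathcal{F}=0$ and repeating the computation leading to \eqref{nablaF} yields a fixed-point identity $u=\mathcal{B}(u)$ whose right-hand side is a series of terms of the form
\begin{equation*}
\int_{\Omega^k}\!\!\Bigl(\prod_{i=0}^{k-1}\tfrac{\widetilde K(n_i,n_{i+1})\gamma_s(u(\eta_i))}{4\pi|\eta_i-\eta_{i+1}|^2}\Bigr)\gamma_a(u(\eta_k))u(\eta_k)\exp\!\Bigl(-\!\sum_{i=0}^k\!\int_{[\eta_i,\eta_{i+1}]}\!\!(\gamma_a+\gamma_s)(u)ds\Bigr)d\eta_1\cdots d\eta_k
\end{equation*}
plus an analogous series accounting for the boundary source $G$, where $\gamma_a,\gamma_s$ are the compositions of $\alpha^a,\alpha^s$ with $u\mapsto T(u)$ and $\eta_0=x$.

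I would then mollify all line integrals $\int_{[\cdot,\cdot]}$ exactly as in Subsection \ref{sec.regular}, obtaining continuous operators $\mathcal{B}^\eps$ that preserve the ball $\{0\leq u\leq K\}$ of $L^\infty(\Omega)$ for $K$ large; the uniform $L^\infty$ bound comes from the same computation as \eqref{u upper bnd}--\eqref{G upper bound} applied term-by-term combined with the geometric decay in the scattering order $k$. The Hölder equicontinuity estimate for $\mathcal{B}^\eps(u)$ is then inherited from \eqref{schauder1}--\eqref{III} applied to each term of the series, and Schauder's fixed-point theorem produces a sequence of regularized solutions $u_\eps$.

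The main obstacle is passing to the limit $\eps\downarrow 0$, because the compactness results of Subsection \ref{sec.compactness.defect} apply only to finitely many line integrals at once, whereas the Duhamel series is infinite. The strategy mirrors the proof of Theorem \ref{Grey thm}: for arbitrary $\beta>0$, I would truncate the series at some $M_0 = M_0(\beta)$, using the geometric decay coming from $\alpha^s/(\alpha^a+\alpha^s)<1$ to estimate the tail uniformly in $\eps$ by $\beta/2$ in $L^\infty$. For each of the first $M_0+1$ terms, the integrand is a product of exponentials of line integrals (handled by Corollary \ref{corollary.cpt}) and $n$-dependent factors satisfying the uniform continuity hypothesis of Corollary \ref{corollary.cpt1} (since $K\in C^1(\stw\times\stw)$), so Proposition \ref{prop.cpt} delivers an $L^2$-equi-integrability estimate in $h$ analogous to \eqref{grey.estimate}. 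This yields compactness of $(u_\eps)$ in $L^2(\Omega)$; extracting a pointwise a.e.\ convergent subsequence and invoking the dominated convergence theorem (together with the uniform $L^\infty$ bound on $u_\eps$) produces a fixed point of the unregularized operator $\mathcal{B}$, and one recovers $T$ and $I_\nu$ from $u$ and the Green function representation, thereby establishing Theorem \ref{thm full equation grey}.
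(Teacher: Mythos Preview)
Your strategy matches the paper's almost exactly: construct Green functions for the transport-plus-scattering operator, express them by a Duhamel series (the ``polygonal paths'' you describe are precisely the expansions \eqref{duhamel.interior} and \eqref{duhamel.boundary}), regularize by mollifying the coefficients, apply Schauder, then pass to the limit by truncating the Duhamel series and invoking Proposition~\ref{prop.cpt} on each of the finitely many remaining terms. This is the content of Subsections~\ref{Sec.full.green}--\ref{Sec.full.cmpt}.

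The one place where the paper is more careful than your sketch is the $L^\infty$ self-map estimate. You argue that $\mathcal{T}^{-1}\mathcal{S}$ contracts because the ratio $\alpha^s/(\alpha^a+\alpha^s)$ is uniformly below $1$; this requires $\alpha^a$ bounded away from zero, which the hypotheses do not directly give (only after restricting $u$ to a compact ball). More importantly, geometric decay of the individual Duhamel terms alone yields $\sum_k\|\mathcal{V}_k\|\le C/(1-\rho)$, which is not obviously $<1$. The paper instead proves a weak maximum principle (Lemma~\ref{weakmax}) for the integrated kernel $H_\eps(x,n)=\int_\Omega \alpha^a_\eps I_\eps(x,n;x_0)\,dx_0$, first obtaining $H_\eps\le 1$ from the PDE it satisfies, and then bootstrapping via the recursive formula to $H_\eps\le\theta=1-e^{-D\|\alpha^a+\alpha^s\|_\infty}<1$. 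This is what delivers $\|\mathcal{B}_\eps u\|_\infty\le\theta\|u\|_\infty$; the contraction factor comes from the \emph{bounded domain}, not from the absorption-to-extinction ratio. Your inductive term-by-term argument can be made to reproduce this bound (the partial sums $\tilde H_k$ satisfy $\tilde H_k\le A+S\cdot\tilde H_{k-1}$ with $A+S\le\theta$), but the paper's maximum-principle route is cleaner and does not require a positive lower bound on $\alpha^a$.
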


For the proof we proceed in the following way. As indicated above we begin constructing a fixed-point equation for the temperature which contains information about the scattering processes. We will hence regularize the problem, similarly as we did in Section \ref{sec.regular} and will prove the existence of regularized solution using the Schauder fixed-point theorem. At the end we will use the compactness theory developed in Subsection \ref{sec.compactness.defect} in order to show the convergence of a subsequence of the regularized solutions to the desired solution. 

We define for $ x,x_0\in\Omega $ and $ n \in\stw$ the fundamental solution $ \tI(x,n;x_0) $ solving the following equation in distributional sense
\begin{multline}\label{eq.green}
n\cdot \nabla_x\tI(x,n;x_0)=\alpha^s(T(x))\int_\stw K(n,n')\tI(x,n';x_0)\;dn'\\-\left(\alpha^a\left(T(x)\right)+\alpha^s\left(T(x)\right)\right)\tI(x,n;x_0)+\delta(x-x_0)
\end{multline}
and the boundary condition for $ x\in\bnd $
\begin{equation*}
\tI(x,n;x_0)\chi_{\{n\cdot n_x<0\}}=0,
\end{equation*}
where $ n_x $ denotes the normal outer vector to $ \bnd $ at $ x $. Similar to the Poisson kernel for the Laplace equation, for $ x\in\Omega $, $ x_0\in\bnd $ and $ n,n_0 \in\stw$ we define the function $ \psi(x,n;x_0,n_0) $ by the equation
\begin{equation}\label{eq.poisson}
\begin{split}
n\cdot\nabla_x \psi(x,n;x_0,n_0)&= \alpha^s(T(x))\int_\stw K(n,n')\psi(x,n';x_0,n_0)\;dn'\\
&\ \ \ \ -\left(\alpha^a\left(T(x)\right)+\alpha^s\left(T(x)\right)\right)\psi(x,n;x_0,n_0),\  x\in\Omega,\\
\psi(x,n;x_0,n_0)\chi_{\{n\cdot n_x<0\}}&=\delta_{\bnd}(x-x_0)\frac{\delta^{(2)}(n,n_0)}{4\pi},\  x\in\Omega, \ n_0\cdot N_{x_0}<0,
\end{split}
\end{equation}
where we denoted by $ \delta^{(2)} $ the two dimensional delta distribution on the sphere and by $ \delta_{\bnd} $ the two dimensional delta distribution on $ \bnd $. This allows to include the effect of the boundary. Before moving to the computations of such functions we see that the intensity of radiation can be expressed by these two functions as follows.
\begin{multline}\label{I.green.poisson}
	I_\nu(x,n)=\int_\Omega dx_0\; \alpha^a(T(x_0))B_\nu(T(x_0))\tI(x,n;x_0)\\ +\int_\stw dn_0\int_{\bnd}dx_0\; g_\nu(n_0)\psi(x,n;x_0,n_0).
\end{multline}
Thus, plugging \eqref{I.green.poisson} into \eqref{heat equation} and using \eqref{integration of blackbody} we obtain
\begin{multline*}
0=\nabla_x\cdot \mathcal{F}(x)= \Div\left(\int_0^\infty d\nu\int_\stw dn\int_\Omega dx_0\; \alpha^a(T(x_0))B_\nu(T(x_0))n\tI(x,n;x_0)\right.\\
+\left.\int_0^\infty d\nu\int_\stw dn\int_\stw dn_0\int_{\bnd}dx_0\; g_\nu(n_0)n\psi(x,n;x_0,n_0)\right)\\
=\sigma \int_\stw dn\int_\Omega dx_0 \; \alpha^a(T(x_0))T^4(x_0)\left[\delta(x-x_0)-\alpha^a(T(x))\tI(x,n;x_0)\right]\\
+\sigma \int_\stw dn\int_\Omega dx_0 \; \alpha^a(T(x_0))T^4(x_0)\alpha^s(T(x))\left[\int_\stw dn' K(n,n')\tI(x,n';x_0)-\tI(x,n;x_0)\right]\\
+\int_\stw dn\int_\stw dn_0\int_{\bnd}dx_0 \ G(n_0)\left[\alpha^s(T(x))\int_\stw K(n,n')\psi(x,n';x_0,n_0)	\;dn'\right.\\-(\alpha^s(T(x))+\alpha^a(T(x)))\psi(x,n;x_0,n_0) \bigg]\\
=4\pi\sigma \alpha^a(T(x)) T^4(x)-\alpha^a(T(x))\sigma\int_\stw dn\int_\Omega dx_0\; \alpha^a(T(x_0))T^4(x_0)\tI(x,n;x_0)\\
-\alpha^a(T(x))\int_\stw dn\int_\stw dn_0\int_{\bnd}dx_0\;G(n_0)\psi(x,n;x_0,n_0),
\end{multline*}
where we defined $G(n)\eqdef \int_0^\infty d\nu \ g_\nu(n),$ and the last equality holds by the property \eqref{scattering assume} of the kernel $ K $ integrating first with respect to $ n $. Hence, defining $ u(x)=4\pi\sigma T^4(x) $ and dividing by $ \alpha^a(T(x)) $ we get the following non-linear fixed-point equation
\begin{multline}\label{fixedpointfull}
u(x)=\int_\Omega dx_0\int_\stw dn\;\frac{\alpha^a(u(x_0))u(x_0)}{4\pi}\tI(x,n;x_0)\\
+\int_\stw dn_0\int_\stw dn\int_{\bnd}dx_0\;G(n_0)\psi(x,n;x_0,n_0),
\end{multline}
where by an abuse of notation we define $ \alpha^a(\cdot)=\alpha^a\left(\sqrt[4]{\frac{\cdot}{4\pi\sigma}}\right)$. 
\subsection{Construction of the Green functions in the Grey case}\label{Sec.full.green}
Let us now construct the Green functions $ \tI $ and $ \psi $. We start with the first function. Denoting by $ H(\cdot) $ the Heaviside function and by $ P_n^\perp $ the projection $ P_n^\perp=I-n\otimes n $, we see using the Fourier transform that the distribution $ f_0(x,n;x_0)=H(n\cdot(x-x_0))\delta^{(2)}\left(P_n^\perp(x-x_0)\right) $ solves in distributional sense the equation
\begin{equation*}
n\cdot \nabla_xf_0(x,n;x_0)=\delta(x-x_0)
\end{equation*}
with zero boundary condition. Hence, the function $ f_1(x,n;x_0)=f_0(x,n;x_0)+\int_\rth dy\ F(y)f_0(x,n;y) $ solves in distributional sense the equation
\begin{equation*}
n\cdot\nabla_x f_1(x,n;x_0)=F(x)+\delta(x-x_0).
\end{equation*}
Notice that by definition of $f_0$ we have $\int_\rth dy\ F(y)f_0(x,n;y)=\int_0^{s(x,n)} dt F(x-tn)$.
Moreover, the function $f_2(x,n;x_0)=f_0(x,n;x_0)\exp\left(-\int_{[x_0,x]} \alpha(\xi)ds(\xi)\right)$ solves in distributional sense the equation \begin{equation*}
n\cdot\nabla_x f_2(x,n;x_0)=\delta(x-x_0)-\alpha(x)f_2(x,n;x_0),
\end{equation*}
since $\frac{x-x_0}{|x-x_0|}\cdot \nabla_x\int_{[x_0,x]} \alpha(\xi)ds(\xi)=\alpha(x)$. Hence, we conclude that \begin{multline*}
    f(x,n;x_0)=f_0(x,n;x_0)\exp\left(-\int_{[x_0,x]} \alpha(\xi)ds(\xi)\right)\\+\int_\rth dy\ F(y)f_0(x,n;y)\exp\left(-\int_{[y,x]} \alpha(\xi)ds(\xi)\right),
\end{multline*} solves in distributional sense the equation
\begin{equation*}
n\cdot\nabla_x f(x,n;x_0)=\delta(x-x_0)-\alpha(x)f(x,n;x_0)+ F(x).
\end{equation*}
Thus, with these considerations we write the Green function $ \tI $ as
\begin{multline}\label{green exact}
\tI(x,n;x_0)\\=\chi_\Omega(x_0)\exp\left(-\int_{[x_0,x]}\left[\alpha^a(u(\xi))+\alpha^s(u(\xi))\right]d\xi\right)H(n\cdot(x-x_0))\delta^{(2)}\left(P_n^\perp(x-x_0)\right)\\
+\int_0^{s(x,n)}dt\; \alpha^s(u(x-tn))\exp\left(-\int_{[x-tn,x]}\left[\alpha^a(u(\xi))+\alpha^s(u(\xi))\right]d\xi\right)\\\times\int_\stw dn'K(n,n')\tI(x-tn,n';x_0).
\end{multline}
This is a recursive formula. After having regularized it we will write down the Duhamel series for this Green function.

Similarly, we can construct the function $\psi$. We notice first of all that for $ x_0\in\bnd $ the distribution $ \psi $ solving the equation \eqref{eq.poisson} is a solution to the equation
\begin{multline*}
n\cdot \nabla_x W(x,n;x_0,n_0)=\alpha^s(T(x))\int_\stw K(n,n')W(x,n';x_0,n_0)\;dn'\\-\left(\alpha^a\left(T(x)\right)+\alpha^s\left(T(x)\right)\right)W(x,n;x_0,n_0)+\delta(x-x_0)\frac{\delta^{(2)}(n,n_0)}{4\pi}.
\end{multline*}
As we have computed above for $ f_0 $, as $ x $ approaches to $ x_0 $ the leading term of the distribution $ W(x,n;x_0,n_0) $ is given by
\begin{equation*}
	W(x,n;x_0,n_0)\simeq H(n_0\cdot(x-x_0))\delta^{(2)}\left(P_{n_0}^\perp (x-x_0)\right)\frac{\delta^{(2)}(n,n_0)}{4\pi}.
\end{equation*}
As for the Poisson Kernel in the case of the Poisson equation, we expect $ W $ to differ from $ \psi $ only for a Jacobian as $ x\to\overline{x}\in\bnd $ with $ n\cdot N_{\overline{x}}<0 $ and $ n_0\cdot N_{x_0}<0 $. We compute now the Jacobian. Hence, we consider $ \varphi\in C_c^{\infty}(\bnd) $ with supp$ (\varphi)\subset B^{\bnd}_\eps(\overline{x}) $. We assume without loss of generality $ \overline{x}=0 $, $ N_{\overline{x}}=-e_1 $ and $ n\cdot e_3=0 $. We compute 
\begin{multline}\label{poisson near bnd 1}
\int_\stw dn_0 \int_{\bnd} dx_0 \;\varphi(x_0)H(n_0\cdot(x-x_0))\delta^{(2)}\left(P_{n_0}^\perp (x-x_0)\right)\frac{\delta^{(2)}(n,n_0)}{4\pi}\\
=\int_{\bnd} dx_0 \; \varphi(x_0)H(n\cdot(x-x_0))\delta^{(2)}\left(P_{n}^\perp (x-x_0)\right).
\end{multline}
For $ \eps>0 $ small enough we can approximate $ \bnd\cap B^{\bnd}_\eps(0)$ by $ \rth_+\cap B_\eps(0) $ and we define for $ x_0^1>0 $ the constant extension $ \overline{\varphi}(x_0^1,x_0^2,x_0^3)=\varphi(x_0^2,x_0^3) $. Moreover we see that in the rotated coordinate system given by $ y_1\parallel n $ and $ y_3\parallel e_3 $ we have 
\begin{multline}\label{delta identity}
\delta^{(2)}(P_n^\perp\left(\overline{x}-x_0)\right)=\delta(y_2)\delta(y_3)\\=\delta\left(-\sqrt{1-\left|n\cdot N_{\overline{x}}\right|^2}\left(\overline{x}^1-x_0^1\right)+\left|n\cdot N_{\overline{x}}\right|\left(\overline{x}^2-x_0^2\right)\right)\delta\left(x^3-x_0^3\right)\\
=\frac{1}{\left|n\cdot N_{\overline{x}}\right|}\delta\left(-\tan(\theta)\left(\overline{x}^1-x_0^1\right)+\left(\overline{x}^2-x_0^2\right)\right)\delta\left(x^3-x_0^3\right),
\end{multline}
where $ \theta $ is the angle between $ n $ and $ e_1 $, hence $ \left|n\cdot N_{\overline{x}}\right|=\cos(\theta) $. Since $ x\to\overline{x} $, we conclude our computation putting \eqref{delta identity} into \eqref{poisson near bnd 1} and thus
\begin{multline}\label{poisson near bnd 2}
\int_\stw dn_0 \int_{\bnd} dx_0 \;\varphi(x_0)H(n_0\cdot(x-x_0))\delta^{(2)}\left(P_{n_0}^\perp (x-x_0)\right)\frac{\delta^{(2)}(n,n_0)}{4\pi}\\
=\int_{\rth_+} dx_0 \;\delta(\overline{x}^1-x_0^1) \frac{1}{\left|n\cdot N_{\overline{x}}\right|} \overline{\varphi}(x_0)\delta\left(-\tan(\theta)\left(\overline{x}^1-x_0^1\right)+\left(\overline{x}^2-x_0^2\right)\right)\delta\left(x^3-x_0^3\right)\\
=\frac{\varphi(\overline{x})}{\left|n\cdot N_{\overline{x}}\right|}.
\end{multline}
We have just proved that in distributional sense we have 
\begin{multline*}
 H(n_0\cdot(x-x_0))\delta^{(2)}\left(P_{n_0}^\perp (x-x_0)\right)\frac{\delta^{(2)}(n,n_0)}{4\pi}\underset{x\to \overline{x}}{\longrightarrow}\frac{\delta_{\bnd}(\overline{x}-x_0)}{|n\cdot N_{\overline{x}}|}\frac{\delta^{(2)}(n,n_0)}{4\pi}\\
 \overset{\mathcal{D}'}{=}\frac{\delta_{\bnd}(\overline{x}-x_0)}{|n_0\cdot N_{x_0}|}\frac{\delta^{(2)}(n,n_0)}{4\pi}. 
\end{multline*}
Hence, as $ x\to\overline{x} $ the distribution $ \psi $ is given at the leading order by
\begin{equation}\label{bndpoisson}
|n_0\cdot N_{x_0}|H(n_0\cdot(x-x_0))\delta^{(2)}\left(P_{n_0}^\perp (x-x_0)\right)\frac{\delta^{(2)}(n,n_0)}{4\pi} .
\end{equation}
We note also, that $ P_n^\perp(x-x_0) $ is not non-trivial only in a neighborhood of $ y(x,n)\in\bnd $ and $ y(x,-n)\in\bnd $. Moreover, $ H(n\cdot(x-y(x,-n)))=0 $ while $ H(n\cdot(x-y(x,n)))=1 $. Hence, with the same reasoning as in equations \eqref{poisson near bnd 1} and \eqref{poisson near bnd 2} we see that for any $ x\in\Omega $
\begin{multline}\label{poisson leading order}
\int_\stw dn_0 \int_{\bnd} dx_0 \;|n_0\cdot N_{x_0}|\varphi(x_0)H(n_0\cdot(x-x_0))\delta^{(2)}\left(P_{n_0}^\perp (x-x_0)\right)\frac{\delta^{(2)}(n,n_0)}{4\pi}\\
=\varphi(y(x,n)).
\end{multline}
We conclude the derivation of the Green function $\psi$ integrating by characteristics the equation \eqref{eq.poisson} with the boundary value given by \eqref{bndpoisson} and we obtain
\begin{multline}\label{poisson formula}
\psi(x,n;x_0,n_0)=|n_0\cdot N_{x_0}|H(n_0\cdot(x-x_0))\\\times\delta^{(2)}\left(P_{n_0}^\perp (x-x_0)\right)\frac{\delta^{(2)}(n,n_0)}{4\pi}\exp\left(-\int_{[x_0,x]}\left[\alpha^a(u(\xi))+\alpha^s(u(\xi))\right]d\xi\right)\\
+\int_0^{s(x,n)}dt\; \alpha^s(u(x-tn))\exp\left(-\int_{[x-tn,x]}\left[\alpha^a(u(\xi))+\alpha^s(u(\xi))\right]d\xi\right)\\\times\int_\stw dn'K(n,n')\psi(x-tn,n';x_0,n_0).
\end{multline}
\subsection{Regularized fixed-point equation in the Grey case}\label{Sec.full.reg}
We proceed now with the regularization of the fixed-point problem stated in \eqref{fixedpointfull}. Similarly as we did in Section \ref{sec.deriv.reg} we regularized the fixed-point equation mollifying with a standard positive and rotationally symmetric mollifier the absorption and scattering coefficients. For $ l\in\{a,s\} $ denote in order to simplify the notation $ \alpha^l(u)*\phi_\eps(x)=\alpha_\eps^l(x) $. Notice that $ \alpha_\eps^l(x) $ still depends on the temperature. We recall also that $$ \int_\rth f(\xi)\delta_{[x,y]}*\phi_\eps(\xi)\;d\xi =\int_{[x,y]} f*\phi_\eps (\xi)\;ds(\xi).$$ Hence, we define $ I_\eps(x,n;x_0) $ and $ \psi_\eps(x,n;x_0,n_0) $ solving the regularized equations
\begin{multline}\label{eq.green.reg}
n\cdot \nabla_x I_\eps(x,n;x_0)=\alpha_\eps^s(x)\int_\stw K(n,n')I_\eps(x,n';x_0)\;dn'\\-\left(\alpha_\eps^a\left(x\right)+\alpha_\eps^s\left(x\right)\right)I_\eps(x,n;x_0)+\delta(x-x_0),
\end{multline}
with zero incoming boundary conditions and 
\begin{equation}\label{eq.poisson.reg}
\begin{split}
n\cdot\nabla_x \psi_\eps(x,n;x_0,n_0)&= \alpha_\eps^s(x)\int_\stw K(n,n')\psi_\eps(x,n';x_0,n_0)\;dn'\\
&\ \ \ -\left(\alpha_\eps^a\left(x\right)+\alpha_\eps^a\left(x\right)\right)\psi_\eps(x,n;x_0,n_0),\  x\in\Omega\\
\psi_\eps(x,n;x_0,n_0)\chi_{\{n\cdot n_x<0\}}&=\delta_{\bnd}(x-x_0)\frac{\delta^{(2)}(n,n_0)}{4\pi},\  x\in\Omega,\  n_0\cdot N_{x_0}<0.
\end{split}
\end{equation}
Hence, the exact recursive formulas defining the regularized distributions are given by
\begin{multline}\label{green exact reg}
I_\eps(x,n;x_0)\\=\chi_\Omega(x_0)\exp\left(-\int_{[x_0,x]}\left[\alpha_\eps^a(\xi)+\alpha_\eps^s(\xi)\right]d\xi\right)H(n\cdot(x-x_0))\delta^{(2)}\left(P_n^\perp(x-x_0)\right)\\
+\int_0^{s(x,n)}dt\; \alpha_\eps^s(x-tn)\exp\left(-\int_{[x-tn,x]}\left[\alpha_\eps^a(\xi)+\alpha_\eps^s(\xi)\right]d\xi\right)\\\times\int_\stw dn'K(n,n')I_\eps(x-tn,n';x_0),
\end{multline}
and
\begin{multline}\label{poisson formula reg}
\psi_\eps(x,n;x_0,n_0)=|n_0\cdot N_{x_0}|H(n_0\cdot(x-x_0))\\\times\delta^{(2)}\left(P_{n_0}^\perp (x-x_0)\right)\frac{\delta^{(2)}(n,n_0)}{4\pi}\exp\left(-\int_{[x-tn,x]}\left[\alpha_\eps^a(\xi)+\alpha_\eps^s(\xi)\right]d\xi\right)\\
+\int_0^{s(x,n)}dt\; \alpha_\eps^s(x-tn)\exp\left(-\int_{[x-tn,x]}\left[\alpha_\eps^a(\xi)+\alpha_\eps^s(\xi)\right]d\xi\right)\\\times\int_\stw dn'K(n,n')\psi_\eps(x-tn,n';x_0,n_0).
\end{multline}
Next we show the existence of regularized solutions $ u_\eps$ to the equation
\begin{multline}\label{reg.fix.point}
u_\eps(x)=\int_\Omega dx_0\int_\stw dn\;\frac{\alpha^a_\eps(u_\eps(x_0))u_\eps(x_0)}{4\pi}I_\eps(x,n;x_0)\\
+\int_\stw dn_0\int_\stw dn\int_{\bnd}dx_0\;G(n_0)\psi_\eps(x,n;x_0,n_0)\\
\eqdef \mathcal{B}_\eps(u_\eps)(x)+\mathcal{C}_\eps(u_\eps)(x).
\end{multline}
We aim to use Schauder fixed-point theorem. We start showing that the operator mapping $ u\in\{C(\Omega): 0\leq u\leq M\} $ to the right-hand side of \eqref{reg.fix.point} is a self map taking $ M $ large enough. Similarly as in \eqref{u upper bnd} we will first show that $\mathcal{B}^\eps$ is a contractive operator. To this end we consider the function \begin{equation}\label{Heps}
    H_\eps(x,n)=\int_\Omega dx_0 \alpha^a_\eps(u(x_0))I_\eps(x,n;x_0)
\end{equation} and we will show by means of the weak maximum principle formulated in the next Subsection that $0\leq H_\eps(x,n)\leq\theta<1$, which will imply the contractivity as
\begin{equation}\label{schauder.estm.1}
\left| \mathcal{B}_\eps(u)(x)\right|\leq \Arrowvert u\Arrowvert_{\sup}\int_\Omega dx_0\int_\stw dn\;\frac{\alpha^a_\eps(u(x_0))}{4\pi}I_\eps(x,n;x_0)\leq \theta \Arrowvert u\Arrowvert_{\sup}.
\end{equation}
\subsection{Weak maximum principle}\label{Sec.full.weak.max}
In order to show that $\mathcal{B}^\eps$ is a contractive operator we consider first the function $H_\eps$ defined in \eqref{Heps}. Integrating with respect to $ x_0 $ the differential equation \eqref{eq.green.reg} satisfied by $ I_\eps $ we obtain the differential equation satisfied by the function $ H_\eps $ in  the sense of distribution:
\begin{multline}\label{eq.H.eps}
0=L_\eps(H_\eps)(x,n)-\alpha_\eps^a(x)\\=n\cdot \nabla_x H_\eps(x,n)-\alpha_\eps^a(x)\left(1-H_\eps(x,n)\right)\\-\alpha_\eps^s(x)\int_\stw dn'\;K(n,n')\left(H_\eps(x,n)-H_\eps(x,n')\right).
\end{multline}
With the following weak maximum principle we will show that $0\leq H_\eps(x,n)\leq 1$. To this end we consider the adjoint operator defined by \begin{multline}\label{adjoint.max.principle}
L^*_\eps(\varphi)(x,n)=-n\cdot \nabla_x \varphi(x,n)+\left(\alpha_\eps^a(x)+\alpha_\eps^s(x)\right)\varphi(x,n)\\-\alpha_\eps^s(x)\int_\stw dn'\;K(n,n')\varphi(x,n').
\end{multline}

\begin{lemma}[Weak maximum principle]\label{weakmax} If a continuous bounded function $ F(x,n) $ satisfies the boundary condition $F(x,n)\geq 0$ for $x\in\bnd$ and $n\cdot n_x<0$ and the inequality $ \int_\stw dn\;\int_\Omega dx\;L^*_\eps(\varphi)(x,n)F(x,n)\geq 0 $ for all non-negative $ \varphi\in C^1\left(\bar{\Omega}\times\stw\right) $ with $ \varphi(x,n)=0 $ for $ x\in\bnd $ and $ n\cdot n_x\geq0 $, then $ F(x,n)\geq 0 $ for all $ x,n\in\Omega\times\stw $.
\begin{remark}
    Before proving Lemma \ref{weakmax} we notice that by definition $ H_\eps $ is a continuous function which also satisfies $ H_\eps(x,n)=0 $ for $ x\in\bnd $ and $ n\cdot n_x<0 $.
\end{remark}
\begin{proof}
 Assume that the claim of Lemma \ref{weakmax} is not true. Then there exists an open set $U\subset \bar{\Omega}\times\stw$ such that $ F(x,n)<0 $ for every $(x,n)\in U$. Let $\xi\in C_c^1\left(U\right)$ with $\xi\geq 0$ and $\xi\not= 0$. We then consider the continuously differentiable function $\varphi$ defined by \begin{equation}\label{max.princ}
L_\eps^*(\varphi)(x,n)=\xi(x,n).
\end{equation}
Let us assume first that such $ \varphi $ exists. Then we can compute
\begin{multline*}
0\leq \int_\stw dn\;\int_\Omega dx\;L^*_\eps(\varphi)(x,n)F(x,n)=\int_\stw dn\;\int_\Omega dx\;\xi(x,n)F(x,n)\\=\int_U dn\;dx\;\xi(x,n)F(x,n)<0.
\end{multline*}
This contradiction implies the claim $ F(x,n)\geq 0 $ for all $ x,n\in\Omega\times\stw $.\\
We show now that such $ \varphi $ exists. Solving by characteristics the equation $$L_\eps^*(\varphi)(x,n)=\xi(x,n),$$ with boundary condition $ \varphi(x,n)=0 $ for $ x\in\bnd $ and $ n\cdot n_x>0 $ we obtain the following recursive formula
\begin{multline*}
\varphi(x,n)=\int_0^{s(x,-n)}\xi(x+tn,n)\exp\left(-\int_0^t\left[\alpha_\eps^a(x+\tau n)+\alpha_\eps^s(x+\tau n)\right]d\tau\right)dt\\+\int_0^{s(x,-n)}dt\;\alpha_\eps^s(x+tn)\exp\left(-\int_0^t\left[\alpha_\eps^a(x+\tau n)+\alpha_\eps^s(x+\tau n)\right]d\tau\right)\\\times\int_\stw dn' K(n,n')\varphi(x+tn,n'),
\end{multline*}
where $ s(x,-n) $ is the length of the line connecting $ x\in\Omega $ with the boundary $ \bnd $ in direction $ n\in\stw $. We still have to prove that $ \varphi $ is continuously differentiable and that it is non-negative. Since all functions $ \alpha_\eps^l $, $ K $ and the exponential functions are non-negative and continuously differentiable we consider the Duhamel expansion of $ \varphi $ as
\begin{multline}\label{max.pric.2}
\varphi(x,n)=\int_0^{s(x,-n)}\xi(x+tn,n)\exp\left(-\int_0^t\left[\alpha_\eps^a(x+\tau n)+\alpha_\eps^s(x+\tau n)\right]d\tau\right)dt\\
+\int_0^{s(x,-n)}dt\;\alpha_\eps^s(x+tn)\exp\left(-\int_0^t\left[\alpha_\eps^a(x+\tau n)+\alpha_\eps^s(x+\tau n)\right]d\tau\right)\int_\stw dn' K(n,n')\\\times\int_0^{s(x+tn,-n')}\xi(x+tn+t_1n',n')\exp\left(-\int_0^t(\alpha_\eps^a+\alpha_\eps^s)(x+tn+\tau n')d\tau\right)dt_1\\+\cdots=\sum_{i=1}^\infty T_i(x,n).
\end{multline}
Recursively, using 
\begin{equation*}
\int_0^D dr \;-\frac{d}{dr}\exp\left(-\int_{0}^r \left[\alpha_\eps^a(z+rn)+\alpha_\eps^s(z+rn)\right]\right) \leq 1-e^{\Arrowvert \alpha\Arrowvert_\infty D}\theta<1
\end{equation*}
for $ D=\text{diam}(\Omega) $ and $ \Arrowvert\alpha\Arrowvert_\infty=\Arrowvert \alpha^s+\alpha^a\Arrowvert_\infty $, the symmetry of $ K $ so that $$ \int_\stw dn' \ K(n,n')=1 ,$$ we can estimate each term of the Duhamel expansion of $ \varphi $ by $ \left| T_i(x,n)\right|\leq \Arrowvert\xi\Arrowvert_\infty D \theta^{i-1} $ and hence we obtain the absolute convergence of the Duhamel series since
\begin{equation*}
	\Arrowvert\varphi\Arrowvert_\infty \leq \Arrowvert\xi\Arrowvert_\infty D\sum_{i=0}^\infty \theta^i<\infty.
\end{equation*}
This implies the non-negativity and the continuity of $ \varphi $. To prove that $ \varphi $ is differentiable one proceeds in the same way. We write the recursive formula for the derivative of $ \varphi $ and we estimate the Duhamel expansion similarly as we did for the boundedness of $ \varphi $ using this time also the uniformly boundedness of $ \varphi $. We omit this computation since it is very similar to the one in \eqref{max.pric.2}.
\end{proof}
\end{lemma}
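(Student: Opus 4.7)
The plan is to argue by contradiction using a duality argument in the spirit of classical weak maximum principles for transport equations with scattering. Suppose that the conclusion fails; then by continuity of $F$, there exists a nonempty open set $U\subset\Omega\times\stw$ on which $F<0$. Pick any nonnegative, nonzero $\xi\in C_c^1(U)$. The strategy is to produce a test function $\varphi\in C^1(\bar{\Omega}\times\stw)$ that is nonnegative, vanishes on the outgoing boundary $\{x\in\bnd,\ n\cdot n_x\geq 0\}$, and satisfies $L^*_\eps(\varphi)=\xi$. Once such a $\varphi$ is available, plugging it into the hypothesis and using $L^*_\eps(\varphi)=\xi$ gives
\begin{equation*}
0\;\leq\;\int_\stw dn\int_\Omega dx\;L^*_\eps(\varphi)(x,n)\,F(x,n)\;=\;\int_U \xi(x,n)\,F(x,n)\,dx\,dn\;<\;0,
\end{equation*}
which is the desired contradiction.

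To construct $\varphi$, I would solve the adjoint equation by characteristics. The transport part $-n\cdot\nabla_x$ has characteristics $t\mapsto x+tn$ going from $x$ in direction $+n$ until exiting $\Omega$ after time $s(x,-n)$, and the natural boundary condition $\varphi=0$ on $\{n\cdot n_x\geq 0\}$ is precisely what is supplied at the terminal endpoint of each such characteristic. Treating the scattering term $\alpha^s_\eps\int_\stw K(n,n')\varphi(x,n')\,dn'$ as an inhomogeneity leads to the recursive Duhamel formula
\begin{equation*}
\varphi(x,n)=\int_0^{s(x,-n)}\!\xi(x+tn,n)\,e^{-\int_0^t[\alpha^a_\eps+\alpha^s_\eps](x+\tau n)d\tau}\,dt\;+\;\mathcal{S}[\varphi](x,n),
\end{equation*}
where $\mathcal{S}$ includes one pass through a scattering event followed by the free-flight attenuated propagator. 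Iterating this identity yields a Duhamel series $\varphi=\sum_{i\geq 1}T_i$ in which each term is manifestly nonnegative (as a product and integral of nonnegative quantities: $\xi$, $K$, and exponentials of negative quantities).

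Convergence and uniform boundedness of the series follow from the two key identities $\int_\stw K(n,n')dn'=1$ (given by \eqref{scattering assume}) and
\begin{equation*}
\int_0^{s(x,-n)}[\alpha^a_\eps+\alpha^s_\eps](x+tn)\,e^{-\int_0^t[\alpha^a_\eps+\alpha^s_\eps]d\tau}\,dt \;=\; 1-e^{-\int_0^{s(x,-n)}[\alpha^a_\eps+\alpha^s_\eps]d\tau},
\end{equation*}
combined with the strict positivity of $\alpha^a$ (hence of $\alpha^a_\eps$) which gives a uniform gap $1-e^{-\|\alpha\|_\infty D}\theta<1$. Each $T_i$ is then bounded by $\|\xi\|_\infty D\,\theta^{i-1}$, so the Duhamel series converges absolutely, producing a continuous nonnegative $\varphi$ that vanishes on the outgoing boundary (since the integral $\int_0^0$ degenerates there).

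The main technical obstacle is the $C^1$-regularity of $\varphi$ up to the boundary, since the scattering integral averages over all directions and could in principle smear out the directional regularity inherited from the characteristics. I would handle this by differentiating the recursive formula once with respect to $(x,n)$ and running the same geometric-series estimate for the derivatives, exploiting that $\xi\in C_c^1(U)$ is supported away from $\bnd$ (so no awkward boundary regularity from $\xi$), that $\alpha^a_\eps,\alpha^s_\eps\in C^1$ by mollification, and that $K\in C^1(\stw\times\stw)$. This produces a Duhamel series for $\nabla\varphi$ with the same contractive structure, yielding uniform bounds and hence $\varphi\in C^1(\bar{\Omega}\times\stw)$, completing the construction and with it the contradiction.
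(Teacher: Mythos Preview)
Your proposal is correct and follows essentially the same approach as the paper: argue by contradiction, choose a nonnegative $\xi\in C_c^1(U)$ supported where $F<0$, solve the adjoint equation $L_\eps^*(\varphi)=\xi$ by characteristics with zero outgoing data, expand the resulting recursive formula as a Duhamel series, and use the geometric-series bound $|T_i|\leq \|\xi\|_\infty D\,\theta^{i-1}$ to obtain a nonnegative $C^1$ test function that produces the contradiction. The only minor remark is that the contraction factor $\theta=1-e^{-\|\alpha\|_\infty D}<1$ comes from the boundedness of the domain and the coefficients rather than from the strict positivity of $\alpha^a$, but this does not affect the argument.
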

With this weak maximum principle we can carry on the proof of the contractivity of the operator $\mathcal{B}^\eps$ in \eqref{reg.fix.point}.
\subsection{Existence of solution to the regularized problem}\label{Sec.full.reg.existence}
We can apply the weak maximum principle to $ 1-H_\eps(x,n) $. Indeed, $ L_\eps(1-H_\varepsilon)=0 $ in distributional sense. With an approximation argument we see that $$\int_\stw  dn\int_\Omega dx\ L^*_\eps(\vphi)(x,n)(1-H_\eps(x,n))\geq \int_\stw dn\int_{\bnd} dx\ \vphi(x,n)(1-H_\eps(x,n)) n \cdot n_x\geq 0$$ for any non-negative $\vphi\in C^1\left(\bar{\Omega}\times\stw\right)$ with $\vphi(x,n)=0$ if $x\in\bnd$ and $n\cdot n_x\geq 0$. For similar arguments see also \cite{zbMATH07686011}. Therefore the weak maximum principle implies $ H_\eps(x,n)\leq 1 $ for all $ x,n\in\Omega\times \stw $.

Hence, estimating then $ H_\eps(x,n) $ by $ 1 $ in the following equation obtained by integrating the equation \eqref{green exact reg} for $ I_\eps $ we get
\begin{multline}\notag
H_\eps(x,n)= \int_\Omega dx_0 \;\alpha^a_\eps(u(x_0))\exp\left(-\int_{[x_0,x]}\left[\alpha_\eps^a(\xi)+\alpha_\eps^s(\xi)\right]d\xi\right)\\\times H(n\cdot(x-x_0))\delta^{(2)}\left(P_n^\perp(x-x_0)\right)\\
+\int_0^{s(x,n)}dt\; \alpha_\eps^s(x-tn)\exp\left(-\int_{[x-tn,x]}\left[\alpha_\eps^a(\xi)+\alpha_\eps^s(\xi)\right]d\xi\right)\\\times\int_\stw dn'K(n,n')H_\eps(x-tn,n')
\\ =\int_0^{s(x,n)}dt\;\alpha_\eps^a(x-tn)\exp\left(-\int_{[x-tn,x]}\left[\alpha_\eps^a(\xi)+\alpha_\eps^s(\xi)\right]d\xi\right)\\
+\int_0^{s(x,n)}dt\; \alpha_\eps^s(x-tn)\exp\left(-\int_{[x-tn,x]}\left[\alpha_\eps^a(\xi)+\alpha_\eps^s(\xi)\right]d\xi\right)\\\times\int_\stw dn'K(n,n')H_\eps(x-tn,n')\\
\leq \int_0^{s(x,n)}dt\;\left(\alpha_\eps^a(x-tn)+\alpha_\eps^s(x-tn)\right)\exp\left(-\int_{[x-tn,x]}\left[\alpha_\eps^a(\xi)+\alpha_\eps^s(\xi)\right]d\xi\right)\\
\leq \left(1-e^{-\Arrowvert\alpha\Arrowvert_\infty D}\right)=\theta<1,
\end{multline}  
where $ D $ is the diameter of $ \Omega $, $ \Arrowvert\alpha\Arrowvert_\infty=\Arrowvert\alpha^a+\alpha^s\Arrowvert_\infty $. The second equality is given solving the delta distribution together with the Heaviside function, while the first inequality is obtained by the isotropy of $ K $ so that $ \int_\stw K(n,n')\;dn'=1 $. Thus, equation \eqref{schauder.estm.1} implies that $\mathcal{B}^\eps$ is contractive with $\left| \mathcal{B}_\eps(u)(x)\right|\leq \theta \Arrowvert u\Arrowvert_{\sup}.$

We move now to the estimate for the boundary term given by $\mathcal{C}_\eps(u) $. It is enough to show that this term is uniformly bounded (say by a constant $ C>0 $), then for $ M\geq  \frac{C}{1-\theta} $ we have $ \left|\mathcal{B}_\eps(u)+\mathcal{C}_\eps(u)\right|\leq M $ for all $ x\in\Omega $ and $ 0\leq u\leq M $. In order to prove the boundedness we expand this boundary term in its Duhamel series taking as starting point the equation satisfied by $\mathcal{C}_\eps(u)$. We simplify the notation denoting by $ A_\eps^l(y,z-y) $ the function 
\begin{equation}\label{Adef}
		A_\eps^l(y,z-y)=\frac{\alpha_\eps^l(u(y))\exp\left(-\int_{[y,z]}\left[\alpha_\eps^a(u)+\alpha_\eps^s(u)\right]d\xi\right)}{|z-y|^2},
		\end{equation}
  for $ l\in\{a,s\} $ and by $E_\eps(z,\omega)$ the function of $\omega\in\stw$ and $z\in\Omega$ given by
  \begin{equation}\label{Edef}
      E_\eps(z,\omega)=\exp\left(-\int_{[y(z,\omega),z]}\left[\alpha_\eps^a(u)+\alpha_\eps^s(u)\right]d\xi\right).
  \end{equation}
  
  We put \eqref{poisson formula reg} into the definition of $ \mathcal{C}_\eps $ in \eqref{reg.fix.point}, we use \eqref{poisson leading order} and the notation above and we compute
\begin{multline}\label{duhamel.boundary}
		\mathcal{C}_\eps(u)(x)=\int_\stw dn\;G(n)E_\eps(x,n)\\+\int_\Omega d\eta A_\eps^s(\eta,x-\eta)\int_\stw dn_0 G(n_0) E_\eps(\eta,n_0)K\left(\frac{x-\eta}{|x-\eta|},n_0\right)\\
		+\int_\Omega d\eta A_\eps^s(\eta,x-\eta)\int_\Omega  d\eta_1\;A_\eps^s(\eta_1,\eta-\eta_1)K\left(\frac{x-\eta}{|x-\eta|},\frac{\eta-\eta_1}{|\eta-\eta_1|}\right)\\\times\int_\stw dn_0 G(n_0) E_\eps(\eta_1,n_0)K\left(\frac{\eta-\eta_1}{|\eta-\eta_1|},n_0\right)+\cdots=\sum_{i=1}^{\infty}\mathcal{U}^\eps_i(u)(x).
		\end{multline}
  
		We now estimate every term $ \mathcal{U}^\eps_i $. The first term is estimated by $$ \left|\mathcal{U}^\eps_1(u)\right|\leq 4\pi \Arrowvert G\Arrowvert_\infty ,$$ since the exponential term is bounded by $ 1 $. Estimating again the exponential term by $ 1 $ and $ g(n) $ by $ \Arrowvert G\Arrowvert_\infty $ and using the isotropy of the scattering kernel we compute
		\begin{multline}\label{est.duhamel.bnd2}
		\left|\mathcal{U}^\eps_2(u)\right|\leq  \Arrowvert G\Arrowvert_\infty \int_\Omega d\eta A_\eps^s(\eta,x-\eta)\\
		\leq \Arrowvert G\Arrowvert_\infty \int_\stw dn\int_0^D dr\left(-\frac{d}{dr}\exp\left(-\int_0^r dt\;\left[\alpha_\eps^a(x-tn)+\alpha_\eps^s(x-tn)\right]\right)\right)\\
		\leq 4\pi \theta \Arrowvert G\Arrowvert_\infty,
		\end{multline}
		where $ \theta=1-e^{-\Arrowvert\alpha\Arrowvert_\infty D} $. For the next terms we proceed similarly.
		\begin{multline}\label{est.duhamel.bnd3}
		\left|\mathcal{U}^\eps_3(u)\right|\leq  \Arrowvert G\Arrowvert_\infty \int_\Omega d\eta A_\eps^s(\eta,x-\eta)\int_\Omega  d\eta_1\;A_\eps^s(\eta_1,\eta-\eta_1)K\left(\frac{x-\eta}{|x-\eta|},\frac{\eta-\eta_1}{|\eta-\eta_1|}\right)\\
		\leq \theta\Arrowvert G\Arrowvert_\infty \int_\Omega d\eta\  A_\eps^s(\eta,x-\eta)
		\leq 4\pi \theta^2 \Arrowvert G\Arrowvert_\infty,
		\end{multline}
		where we estimated $$ \int_\stw dn\int_0^D \left(-\frac{d}{dr}\exp\left(-\int_0^r dt\;\left[\alpha_\eps^a(\eta-tn)+\alpha_\eps^s(\eta-tn)\right]\right)\right)K\left(\frac{x-\eta}{|x-\eta|},n\right)\leq \theta .$$
		Recursively, we conclude that the boundary term is uniformly bounded as
		\begin{equation}\label{est.duhamel.bnd}
		|\mathcal{C}_\eps(u)(x)|\le \sum_{i=1}^{\infty}|\mathcal{U}^\eps_i(u)(x)|\leq 4\pi \Arrowvert G\Arrowvert_\infty \sum_{i=0}^\infty \theta^i=4\pi \Arrowvert G\Arrowvert_\infty \frac{1}{1-\theta}<\infty.
		\end{equation}
        In a similar way, combining the fact that each term $\mathcal{U}^\eps_i$ maps continuously bounded (continuous) maps to bounded (continuous) maps and the uniform absolute convergence of the Duhamel series we can conclude that $\mathcal{C}_\eps$ is a continuous operator. 
		Next we prove that $ \mathcal{B}_\eps(u)(x)+\mathcal{C}_\eps(u)(x) $ is H\"older continuous. This will imply on the one hand that the operator is a self map and on the other hand that it is compact. Hence, Schauder fixed-point theorem concludes the existence of the regularize solutions satisfying \eqref{reg.fix.point}. Before starting this proof we recall that we have shown in Section \ref{sec.reg.exist} the H\"older continuity of all kind of operators given by
		\begin{equation*}
		\int_\Omega d\eta\;\frac{\alpha_\eps^l(u(\eta))\exp\left(-\int_{[\eta,x]}\left[\alpha_\eps^a(u)+\alpha_\eps^s(u)\right]d\xi\right)}{|x-\eta|^2},
		\end{equation*}
		for $ l\in\{a,s\} $ and 
		\begin{equation*}
		\int_\stw dn\;\exp\left(-\int_{[y(x,n),x]}\left[\alpha_\eps^a(u)+\alpha_\eps^s(u)\right]d\xi\right).
		\end{equation*}
		Moreover, in order to see the H\"older continuity of the interior term we have to expand the recursive formula of $ \mathcal{B}_\eps $ in its Duhamel series. We hence put \eqref{green exact reg} into the definition of $ \mathcal{B}_\eps $ in \eqref{reg.fix.point} and we compute
		\begin{multline}\label{duhamel.interior}
		\mathcal{B}_\eps(u)(x)=\int_\Omega dx_0\; \frac{A_\eps^a(x_0,x-x_0)u(x_0)}{4\pi}\\+\int_\Omega d\eta\int_\Omega d\eta_1\;A_\eps^s(\eta,x-\eta)\frac{A_\eps^a(\eta_1,\eta-\eta_1)u(\eta_1)}{4\pi}K\left(\frac{x-\eta}{|x-\eta|},\frac{\eta-\eta_1}{|\eta-\eta_1|}\right)\\+\int_\Omega d\eta\int_\Omega d\eta_1\int_\Omega d\eta_2\;A_\eps^s(\eta,x-\eta)A_\eps^s(\eta_1,\eta-\eta_1)\frac{A_\eps^a(\eta_2,\eta_1-\eta_2)u(\eta_2)}{4\pi}\\\times K\left(\frac{x-\eta}{|x-\eta|},\frac{\eta-\eta_1}{|\eta-\eta_1|}\right)K\left(\frac{\eta-\eta_1}{|\eta-\eta_1|},\frac{\eta_1-\eta_2}{|\eta_1-\eta_2|}\right)
		+\cdots
		=\sum_{i=1}^\infty \mathcal{V}^\eps_i(u)(x),
		\end{multline}
		where we used that $$ \int_\stw dn \;H(n\cdot (x-x_0))\delta^{(2)}(P_n^\perp(x-x_0))=\frac{1}{|x-x_0|^2},$$ in distributional sense.
  \begin{remark}
      Notice that \eqref{duhamel.boundary} and \eqref{duhamel.interior} encode the fact that due to the scattering the photons move along a polygonal line.
  \end{remark}
  Notice in addition that also $\mathcal{B}_\eps$ maps continuously bounded (continuous) functions to bounded (continuous) functions. This is due to the uniform absolute convergence of the Duhamel series (similar calculation as for \eqref{est.duhamel.bnd} and \eqref{holder.interior}) and the continuity of each term $\mathcal{V}_i^\eps$ in \eqref{duhamel.interior}.

  We aim to show the H\"older continuity of the operators $\mathcal{B}^\eps$ and $\mathcal{C}^\eps$. We consider hence $u\in C(\Omega)$ with $0\leq u\leq M$. As we did in Subsection \ref{sec.reg.exist} we extend $u$ continuously on the boundary $\bnd$ and then $u$, $\alpha^a(u)$ and $\alpha^s(u)$ by zero outside $\bar{\Omega}$.
  We proceed now estimating term by term the following difference for $ h\in\rth $ and $x,\ x+h\in\rth$
		\begin{equation*}
		\left|\mathcal{B}_\eps(u)(x)-\mathcal{B}_\eps(u)(x+h)\right|\leq \sum_{i=1}^\infty \left|\mathcal{V}^\eps_i(x)-\mathcal{V}^\eps_i(x+h)\right|.
		\end{equation*}
		For the first term we use the result in \eqref{I} and \eqref{II} and conclude
		\begin{equation*}
		\left|\mathcal{V}^\eps_1(x)-\mathcal{V}^\eps_1(x+h)\right|\leq C(\Omega, \Arrowvert\alpha\Arrowvert_\infty,\phi_\eps)\Arrowvert u\Arrowvert_\infty\left|h\right|^{\frac{1}{2}}.
		\end{equation*}
		For the next order terms we need also to estimate expressions of the form $$\left|K\left(\frac{x-\eta}{|x-\eta|},\frac{\eta-\xi}{|\eta-\xi|}\right)-K\left(\frac{x+h-\eta}{|x+h-\eta|},\frac{\eta-\xi}{|\eta-\xi|}\right)\right| .$$ Using the property of $ K $ being continuously differentiable in both variables and making use of the triangle inequality we know that there exists a constant $ C_K>0 $ depending exclusively on $ K $ such that 
		\begin{equation}\label{estimate.K}
		\left|K\left(\frac{x-\eta}{|x-\eta|},\frac{\eta-\xi}{|\eta-\xi|}\right)-K\left(\frac{x+h-\eta}{|x+h-\eta|},\frac{\eta-\xi}{|\eta-\xi|}\right)\right| \leq \frac{2C_K|h|^{\frac{1}{2}}}{|x+h-\eta|^{\frac{1}{2}}}.
		\end{equation}
		We can hence proceed with the second term of the Duhamel series. We apply the triangle inequality first and then we combine the results for \eqref{I} and \eqref{II} with the estimate \eqref{estimate.K} and with the fact that $|x|^{-\frac{5}{2}}\in L^1(B_1(0))$. Then we have
		\begin{multline}\label{estimate.interior.holder2}
		\left|\mathcal{V}^\eps_2(x)-\mathcal{V}^\eps_2(x+h)\right|\leq \Arrowvert u\Arrowvert \int_\Omega d\eta\int_\Omega d\eta_1\;\frac{A_\eps^a(\eta_1,\eta-\eta_1)}{4\pi}K\left(\frac{x-\eta}{|x-\eta|},\frac{\eta-\eta_1}{|\eta-\eta_1|}\right)\\\times\left|A_\eps^s(\eta, x-\eta)-A_\eps^s(\eta,x+h-\eta)\right|\\+\Arrowvert u\Arrowvert \int_\Omega d\eta\int_\Omega d\eta_1\;\frac{A_\eps^a(\eta_1,\eta-\eta_1)}{4\pi}A_\eps^s(\eta,x+h-\eta)\\\times\left|K\left(\frac{x-\eta}{|x-\eta|},\frac{\eta-\eta_1}{|\eta-\eta_1|}\right)-K\left(\frac{x+h-\eta}{|x+h-\eta|},\frac{\eta-\eta_1}{|\eta-\eta_1|}\right)\right|\\
		\leq\Arrowvert u\Arrowvert_\infty \theta\int_\Omega d\eta\;\frac{1}{4\pi}\left|A_\eps^s(\eta, x-\eta)-A_\eps^s(\eta,x+h-\eta)\right|\\+\Arrowvert u\Arrowvert_\infty C_K|h|^{\frac{1}{2}}\theta \int_\Omega d\eta\; \frac{A_\eps^s(\eta,x+h-\eta)}{|x+h-\eta|^{\frac{1}{2}}}\\
		\leq \Arrowvert u\Arrowvert_\infty\theta \left(C(\Omega,\phi_\eps,\Arrowvert \alpha\Arrowvert_\infty)+4\pi C_K \Arrowvert\alpha\Arrowvert_\infty D^{\frac{1}{2}}\right)|h|^{\frac{1}{2}},
		\end{multline}
		where we estimated as we did above\begin{multline*}
		\int_\Omega d\eta_1\;A_\eps^a(\eta_1,\eta-\eta_1)K\left(\frac{x-\eta}{|x-\eta|},\frac{\eta-\eta_1}{|\eta-\eta_1|}\right)\\\leq\int_\stw dn\int_0^D dr \;-\frac{d}{dr}\exp\left(-\int_{0}^r...\right)K\left(\frac{x-\eta}{|x-\eta|},n\right) \leq \theta,
		\end{multline*}
		with $ \theta=1-e^{-\Arrowvert\alpha\Arrowvert_\infty D} $ and similarly also $$ \int_\Omega d\eta_1\;A_\eps^a(\eta_1,\eta-\eta_1)\frac{1}{4\pi}\leq \theta .$$ We can iterate this procedure for all terms in the Duhamel series and we obtain the following estimate
		\begin{multline}\label{holder.interior}
		\left|\mathcal{B}_\eps(u)(x)-\mathcal{B}_\eps(u)(x+h)\right|\\\leq \Arrowvert u\Arrowvert_\infty\left(C(\Omega,\phi_\eps,\Arrowvert \alpha\Arrowvert_\infty)+4\pi C_K \Arrowvert\alpha\Arrowvert_\infty D^{\frac{1}{2}}\right)|h|^{\frac{1}{2}}\sum_{i=0}^\infty \theta^i\\= \Arrowvert u\Arrowvert_\infty\left(C(\Omega,\phi_\eps,\Arrowvert \alpha\Arrowvert_\infty)+4\pi C_K \Arrowvert\alpha\Arrowvert_\infty D^{\frac{1}{2}}\right)|h|^{\frac{1}{2}}\frac{1}{1-\theta}.
		\end{multline}
		Using the result \eqref{III} combined with \eqref{estimate.K} we see in the same way that also the boundary term operator is H\"older continuous with
		\begin{multline}\label{holder.boundary}
		\left|\mathcal{C}_\eps(u)(x)-\mathcal{C}_\eps(u)(x+h)\right|\\\leq 4\pi \Arrowvert G\Arrowvert_\infty\left(C(\Omega,\phi_\eps,\Arrowvert\alpha\Arrowvert_\infty)+C_K D^{\frac{1}{2}}\Arrowvert\alpha\Arrowvert_\infty\right)\left(1+\frac{1}{1-\theta}\right)|h|^{\frac{1}{2}}.
		\end{multline}
		Indeed, we compute using \eqref{III} for the first term in the Duhamel series of the boundary term
		\begin{multline*}
		\left|\mathcal{U}^\eps_1(x)-\mathcal{U}^\eps_1(x+h)\right|\leq \Arrowvert G\Arrowvert_\infty \int_\stw dn\; \left|E_\eps(x,n)-E_\eps(x+h,n)\right|\leq \Arrowvert G\Arrowvert_\infty C_\eps C(\Omega)|h|^{\frac{1}{2}}.
		\end{multline*}
		Moreover, the estimates \eqref{I}, \eqref{II} together with \eqref{estimate.K} gives for the second term
		\begin{multline*}
		\left|\mathcal{U}^\eps_2(x)-\mathcal{U}^\eps_2(x+h)\right|\\\leq \Arrowvert G\Arrowvert_\infty \int_\stw dn_0\int_\Omega d\eta\; K\left(\frac{x-\eta}{|x-\eta|},n_0\right)  \left|A^s_\eps(\eta,x-\eta)-A^s_\eps(\eta,x+h-\eta)\right|\\
		+\Arrowvert G\Arrowvert_\infty \int_\stw dn_0\int_\Omega d\eta\;A^s_\eps(\eta,x+h-\eta) \left| K\left(\frac{x-\eta}{|x-\eta|},n_0\right) -K\left(\frac{x+h-\eta}{|x+h-\eta|},n_0\right)\right|\\ \leq \Arrowvert G\Arrowvert_\infty |h|^{\frac{1}{2}}\left(C_\eps C(\Omega)+C_K 4\pi D^{\frac{1}{2}}\right).
		\end{multline*}
		Similarly integrating first with respect to $ n_0 $, then with respect to $ \eta_1 $ and finally with respect to $ \eta $ we obtain 
		\begin{multline*}
		\left|\mathcal{U}^\eps_3(x)-\mathcal{U}^\eps_3(x+h)\right|\\\leq \Arrowvert G\Arrowvert_\infty \int_\Omega d\eta\int_\Omega d\eta_1\int_\stw dn_0\;A^s_\eps(\eta_1,\eta-\eta_1) K\left(\frac{x-\eta}{|x-\eta|},\frac{\eta-\eta_1}{|\eta-\eta_1|}\right)\\\times K\left(\frac{\eta-\eta_1}{|\eta-\eta_1|},n_0\right)  \left|A^s_\eps(\eta,x-\eta)-A^s_\eps(\eta,x+h-\eta)\right|\\
		+\Arrowvert G\Arrowvert_\infty \int_\Omega d\eta\int_\Omega d\eta_1\int_\stw dn_0\;A^s_\eps(\eta_1,\eta-\eta_1)A^s_\eps(\eta,x+h-\eta)K\left(\frac{\eta-\eta_1}{|\eta-\eta_1|},n_0\right) \\\times \left| K\left(\frac{x-\eta}{|x-\eta|},\frac{\eta-\eta_1}{|\eta-\eta_1|}\right) -K\left(\frac{x+h-\eta}{|x+h-\eta|},\frac{\eta-\eta_1}{|\eta-\eta_1|}\right)\right|\\ \leq \Arrowvert G\Arrowvert_\infty \theta \int_\Omega d\eta  \left|A^s_\eps(\eta,x-\eta)-A^s_\eps(\eta,x+h-\eta)\right|\\+\Arrowvert G\Arrowvert_\infty \theta C_K |h|^{\frac{1}{2}} \int_\Omega d\eta \frac{A_\eps^s(\eta,x+h-\eta)}{|x+h-\eta|^{\frac{1}{2}}}
		\\\leq\Arrowvert G\Arrowvert_\infty \theta|h|^{\frac{1}{2}}\left(C_\eps C(\Omega)+C_K 4\pi D^{\frac{1}{2}}\right).
		\end{multline*}
		Iterating this procedure we obtain the estimate \eqref{holder.boundary}.
		
		Hence, \eqref{holder.interior} and \eqref{holder.boundary} imply that the operator $ \mathcal{B}_\eps+\mathcal{C}_\eps $ is a compact selfmap, mapping continuously uniformly bounded continuous functions to H\"older continuous functions.
        Thus, by the Schauder fixed-point theorem we obtain for every $ \eps>0 $ a solution $ u_\eps $ to \eqref{reg.fix.point}.
		\subsection{Compactness of the sequence of regularized solution and proof of Theorem \ref{thm full equation grey}}\label{Sec.full.cmpt}
  \begin{proof}[Proof of Theorem \ref{thm full equation grey}]
		In order to end the proof of Theorem \ref{thm full equation grey}, we will show that the sequence of regularized solution $ u_\eps $ to the equation \eqref{reg.fix.point} is compact in $ L^2 $. We already know that this is true in the case of pure absorption and emission, as we have seen in Subsection \ref{sec.grey proof}. We will use the compactness result of Subsection \ref{sec.grey proof} in order to show that the same result holds also in the case of scattering. A crucial role is played in this proof by the result of Proposition \ref{prop.cpt}. Let us consider a sequence $ \eps_j=\frac{1}{j} $. In order to simplify the notation we define the sequence of regularized solutions $ u_{\eps_j}=u_j $, the coefficients $ \alpha_{\eps_j}^l(u_{\eps_j})=\alpha_j^l(u_j) $ as well as all kind of operators $ \mathcal{B}_{\eps_j}=\mathcal{B}_j $, $ \mathcal{C}_{\eps_j}=\mathcal{C}_j $, $ A_{\eps_j}^l(y,z+y)=A_j^l(y,z+y) $ and $ E_{\eps_j}(z,\omega)=E_j(z,\omega) $. 
		
	
	By the uniformly boundedness of the sequence and the boundedness of $ \Omega $ we have only to show the equicontinuity, i.e. we want to prove that for any $ \beta>0 $ there exists a $ H_1(\beta)>0 $ such that 
	\begin{multline}\label{cpt.scattering1}
\int_\Omega dx\;\left|\mathcal{B}_{j}(u_{j})(x)+\mathcal{C}_{j}(u_{j})(x)-\mathcal{B}_{j}(u_{j})(x+h)-\mathcal{C}_{j}(u_{j})(x+h)\right|^2\\<C(\alpha^s,\alpha^a,\Omega, M,G)\beta
	\end{multline}
	for all $ |h|<H_1 $ and all $ j\in\mathbb{N} $. Notice that the constant $ C(\alpha^s,\alpha^a,\Omega, M,G) $ is independent of $ j\in\mathbb{N} $, of $\beta>0$ and of $h\in\rth$. This would imply the $ L^2- $compactness of the sequence $ \mathcal{B}_{j}(u_{j})+\mathcal{C}_{j}(u_{j}) $.

	In order to prove this statement we start recalling the boundedness of the interior term $ \mathcal{B}_j(u_{j}) $ and of the boundary term $  \mathcal{C}_{j}(u_{j}) $ as
	\begin{equation}
		\sup_{x\in\Omega}\left| \mathcal{B}_{j}(u_{j})(x)\right|\leq \sup_{x\in\Omega}\sum_{i=1}^\infty \left|\mathcal{V}^{j}_i(x)\right|\leq M\theta \sum_{i=0}^\infty \left(\theta\right)^i=\frac{M\theta}{1-\theta},
	\end{equation}
	where $ \theta=1-e^{\Arrowvert\alpha\Arrowvert_\infty D}<1 $. The computation is similar to the one we did in \eqref{est.duhamel.bnd} for the boundary term and to the H\"older estimate in \eqref{estimate.interior.holder2}. Moreover, \eqref{est.duhamel.bnd} implies $$\sup_{x\in\Omega}\left| \mathcal{C}_{j}(u_{j})(x)\right|\leq \sup_{x\in\Omega}\sum_{i=1}^\infty \left|\mathcal{U}^{j}_i(x)\right|\leq  \frac{4\pi}{1-\theta}\Arrowvert G\Arrowvert_\infty  .$$ Hence, let $ \beta>0 $. There exists an $ N_0(\beta)>0 $ such that
	\begin{equation}\label{cpt.scattering.tails}
	\sup_{x\in\Omega}\left| \sum_{i=N_0}^\infty \left|\mathcal{V}^{j}_i(x)\right|+\left|\mathcal{U}^{j}_i(x)\right|\right|^2<\beta.
	\end{equation}
	Thus, using the triangle inequality we obtain
	\begin{multline}\label{cpt.scattering.2}
\int_\Omega dx\;\left|\mathcal{B}_{j}(u_{j})(x)+\mathcal{C}_{j}(u_{j})(x)-\mathcal{B}_{j}(u_{j})(x+h)-\mathcal{C}_{j}(u_{j})(x+h)\right|^2\\\leq 2\beta|\Omega|+2\int_\Omega dx\;\left|\sum_{i=1}^{N_0-1} \mathcal{V}^{j}_i(x)- \mathcal{V}^{j}_i(x+h)\right|^2\\+2\int_\Omega dx\;\left|\sum_{i=1}^{N_0-1} \mathcal{U}^{j}_i(x)- \mathcal{U}^{j}_i(x+h)\right|^2\\
\leq 2\beta |\Omega| +2N_0 \sum_{i=1}^{N_0-1} \int_\Omega dx\;\left|\mathcal{V}^{j}_i(x)- \mathcal{V}^{j}_i(x+h)\right|^2\\+2N_0\sum_{i=1}^{N_0-1}\int_\Omega dx\;\left| \mathcal{U}^{j}_i(x)- \mathcal{U}^{j}_i(x+h)\right|^2.
	\end{multline}
 We aim to use for each term $$\int_\Omega dx\;\left|\mathcal{V}^{j}_i(x)- \mathcal{V}^{j}_i(x+h)\right|^2 \text{ and }\int_\Omega dx\;\left| \mathcal{U}^{j}_i(x)- \mathcal{U}^{j}_i(x+h)\right|^2,$$ for $0\leq 1\leq N_0-1$ the results in Corollary \ref{corollary.cpt} and Proposition \ref{prop.cpt} in order to show that they are equi-integrable.

	Let us start considering the interior terms and we write each of them in spherical coordinates. We extend by $ 0 $ all the functions $ \alpha^l $ and $ u_j $ which are defined only on the domain $ \Omega $. We denote by $ D=\text{diam}(\Omega)$ as usually. For $ i=1 $ we have 
	\begin{multline*}
	\mathcal{V}_i^j(x)=\int_\stw dn\int_0^D dr u_j(x-rn)\alpha^a_j\left(u_j(x-rn)\right)\\\times\exp\left(-\int_0^r \left[\alpha_j^a\left(u_j(x-\lambda n)\right)+\alpha^s_j\left(u_j(x-\lambda n)\right)\right]d\lambda\right).
	\end{multline*}
	We notice that taking $ \vphi_j(x)= u_j(x)\alpha^a_j\left(u_j(x)\right) $ and $ \psi_j(x)=\alpha_j^a\left(u_j(x)\right)+\alpha^s_j\left(u_j(x)\right) $ Corollary \ref{corollary.cpt} implies the compactness of the first interior term.
	Let us consider the second term. There we define 
	\begin{multline*}
	F_j^{(2)}(x,\omega)=\int_0^D d\lambda \int_\stw dn\; K(\omega,n)u_j(x-rn)\alpha^a_j\left(u_j(x-\lambda n)\right)\\\times\exp\left(-\int_0^\lambda \left[\alpha_j^a\left(u_j(x-r n)\right)+\alpha^s_j\left(u_j(x-r n)\right)\right]dr\right)
	\end{multline*}
	We notice that $ F_j^{(2)} $ is uniformly bounded in both variables and that it is uniformly continuous with respect to the second variable. Indeed, we can estimate on the one hand 
	\begin{equation}
\left|F_j^{(2)}(x,\omega)\right|\leq M\theta\int_\stw dn\; K(\omega,n)=M\theta
	\end{equation}
	and on the other hand also
		\begin{equation}\label{equicont.F.2}
	\left|F_j^{(2)}(x,\omega_1)-F_j^{(2)}(x,\omega_2)\right|\leq M\theta\int_\stw dn\;\left| K(\omega_1,n)-K(\omega_2,n)\right|\leq 4\pi M\theta C_K d(\omega_1,\omega_2).
	\end{equation}
	Hence, defining also  the error term
	\begin{multline}
\mathcal{R}_j^{(2)}(x)=\fint_\stw dn\int_{(s(x,n))}^D d\lambda F_j^{(2)}(x-\lambda n,n)\alpha_j^s\left(u_j(x-\lambda n)\right)\\\times\exp\left(-\int_0^\lambda \left[\alpha_j^a\left(u_j(x-r n)\right)+\alpha^s_j\left(u_j(x-r n)\right)\right]dr\right)
	\end{multline}
	we can write the second term of the operator $ \mathcal{B}_j $ as
	\begin{multline}
	\mathcal{V}_j^2(x)=-\mathcal{R}_j^{(2)}(x)+\fint_\stw dn\int_0^D d\lambda F_j^{(2)}(x-\lambda n,n)\alpha_j^s\left(u_j(x-\lambda n)\right)\\\times\exp\left(-\int_0^\lambda \left[\alpha_j^a\left(u_j(x-r n)\right)+\alpha^s_j\left(u_j(x-r n)\right)\right]dr\right).
	\end{multline}
	We notice that since $ \alpha_j(u_j) $ is supported in $ \Omega +\frac{1}{j} $ we can estimate the error term by $$ \left|\mathcal{R}_j^{(2)}(x)\right|\leq M\Arrowvert\alpha\Arrowvert_\infty \theta C(\Omega)\left(\frac{1}{j}\right)^{\frac{1}{2}} .$$ Hence, taking  $ \vphi_j(x,\omega)= F_j^{(2)}(x,\omega)\alpha^s_j\left(u_j(x)\right) $ and $ \psi_j(x)=\alpha_j^a\left(u_j(x)\right)+\alpha^s_j\left(u_j(x)\right) $, Proposition \ref{prop.cpt} implies the compactness in $ L^2(\Omega) $ of $ \mathcal{V}_j^2(x)+\mathcal{R}_j^{(2)}(x) $.
	
	We are ready now for the generalization of this result. We define for $ i\geq 2 $ the functions $ F_j^{(i)}(x,\omega) $ and $ \mathcal{R}_j^{(i)}(x) $ by
	\begin{multline}\label{F.grey}
	F_j^{(i)}(x,\omega)=\int_0^D d\lambda_2...\int_0^D d\lambda_i \int_\stw dn_2...\int_\stw dn_i\; K(\omega,n_2)...K(n_{i-1},n_i)\\\times\alpha^s_j\left(u_j(x-\lambda_2 n_2)\right)\exp\left(-\int_0^{\lambda_2} \left[\alpha_j^a\left(u_j(x-r n_2)\right)+\alpha^s_j\left(u_j(x-r n_2)\right)\right]dr\right)\\\times ...\times u_j(x-\lambda_2n_2+\cdots-\lambda_in_i)\alpha^a_j\left(u_j(x-\lambda_2n_2+\cdots-\lambda_in_i)\right)\\\times\exp\left(-\int_0^{\lambda_i} \left[\alpha_j^a\left(u_j(x-\lambda_2n_2+\cdots-rn_i)\right)+\alpha^s_j\left(u_j(x-\lambda_2n_2+\cdots-rn_i)\right)\right]dr\right)
	\end{multline}
	and 
	\begin{multline}
	\mathcal{V}_i^j(x)+\mathcal{R}_j^{(i)}(x)=\fint_\stw dn\int_0^D d\lambda F_j^{(i)}(x-\lambda n,n)\alpha_j^s\left(u_j(x-\lambda n)\right)\\\times\exp\left(-\int_0^\lambda \left[\alpha_j^a\left(u_j(x-r n)\right)+\alpha^s_j\left(u_j(x-r n)\right)\right]dr\right).
	\end{multline}
	Thus, we estimate $$ \left|F_j^{(i)}(x,\omega)\right|\leq M\theta^{i-1} , \  \left|	\mathcal{R}_j^{(i)}(x)\right|\leq (i-1)M\Arrowvert \alpha\Arrowvert_\infty \theta^{i-1} C(\Omega)\left(\frac{1}{j}\right)^{\frac{1}{2}},$$ as well as $$ \left|F_j^{(i)}(x,\omega_1)-F_j^{(i)}(x,\omega_2)\right|\leq 4\pi M\theta^{i-1} C_K d(\omega_1,\omega_2). $$ Again, Proposition \ref{prop.cpt} implies the $ L^2 $-compactness of $ \mathcal{V}_i^j(x)+\mathcal{R}_j^{(i)}(x) $. The compactness of $ \mathcal{V}_i^j(x)+\mathcal{R}_j^{(i)}(x) $ for $ 1\leq i<N_0(\beta) $ implies the existence of an $ h_0>0 $ such that
\begin{equation}
	\int_\Omega dx \left|\mathcal{V}_i^j(x)-\mathcal{V}_i^j(x+h)\right|^2\leq \frac{\beta}{2N_0(\beta)}+|\Omega|N_0M\Arrowvert \alpha\Arrowvert_\infty \theta^{i-1} C(\Omega)\left(\frac{1}{j}\right)
\end{equation}
for all $ |h|<h_0 $, for all $ j\geq 0 $ and for all $ 1\leq i<N_0 $. Hence,
\begin{multline}\label{cpt.scattering.interior}
\int_\Omega dx\;\left|\mathcal{B}_{j}(u_{j})(x)-\mathcal{B}_{j}(u_{j})(x+h)\right|^2\\
\leq 2\beta |\Omega| +2N_0 \sum_{i=1}^{N_0-1} \int_\Omega dx\;\left|\mathcal{V}^{j}_i(x)- \mathcal{V}^{j}_i(x+h)\right|^2\\
\leq \beta \left(2|\Omega|+1\right)+C(\Omega,\Arrowvert\alpha\Arrowvert_\infty,M)N_0^2\frac{1}{j},
\end{multline}
for all $ |h|<h_0 $ and for all $ j\geq 0 $.

We examine now to the operator $ \mathcal{C}_j(x) $ associated to the boundary term. We proceed similarly as for the interior term rewriting each expression $ \mathcal{U}_i^j $ in spherical coordinates. We start as usual with $ i=1 $, where we have
\begin{multline*}
\mathcal{U}_1^j(x)=\int_\stw dn\:G(n)\exp\left(-\int_0^{s(x,n)}\left[\alpha_j^a\left(u_j(x-r n)\right)+\alpha^s_j\left(u_j(x-r n)\right)\right]dr \right)\\
=\int_\stw dn\:G(n)\exp\left(-\int_0^{D}\left[\alpha_j^a\left(u_j(x-r n)\right)+\alpha^s_j\left(u_j(x-r n)\right)\right]dr \right)+\mathcal{R}_j^{(1)}(x),
\end{multline*}
where
\begin{multline*}
\left|\mathcal{R}_j^{(1)}(x)\right|=\left|\int_\stw dn\:G(n)\left[\exp\left(-\int_0^{s(x,n)}\left[\alpha_j^a\left(u_j(x-r n)\right)+\alpha^s_j\left(u_j(x-r n)\right)\right]dr\right)\right.\right.\\\left.\left.-\exp\left(-\int_0^{D}\left[\alpha_j^a\left(u_j(x-r n)\right)+\alpha^s_j\left(u_j(x-r n)\right)\right]dr \right)\right]\right|\\\leq\int_\stw dn\:G(n)\int_{s(x,n)}^D \left|\alpha_j^a\left(u_j(x-r n)\right)+\alpha^s_j\left(u_j(x-r n)\right)\right| dr\\\leq \Arrowvert G\Arrowvert_{L^1}\Arrowvert\alpha\Arrowvert_\infty C(\Omega)\left(\frac{1}{j}\right)^{\frac{1}{2}}.
\end{multline*}
Moreover, taking $ \psi_j=\alpha_j^a(u_j)+\alpha^s_j(u_j) $ and $ f=G $, Corollary \ref{corollary.cpt} implies the compactness of $ \mathcal{U}_1^j-\mathcal{R}_j^{(1)} $ in $ L^2(\Omega) $.

We proceed with $ i=2 $. Here with the change of variables $ \eta=x-\lambda_1n_1 $ we obtain
\begin{multline*}
\mathcal{U}_2^j(x)=\int_\Omega d\eta\ A_j^s(\eta,x-\eta)\int_\stw dn_0 G(n_0)E_j(\eta,n_0)K\left(\frac{x-\eta}{|x-\eta|},n_0\right)\\
=\int_\stw dn_1\int_0^{s(x,n_1)}d\lambda_1\int_\stw dn_0 K(n_1,n_0)G(n_0)  \alpha_j^s(u_j(x-\lambda_1n_1))\\\times \exp\left(-\int_0^{\lambda_1}\left[\alpha_j^a\left(u_j\right)+\alpha^s_j\left(u_j\right)\right](x-r n_1)dr \right)\\\times \exp\left(-\int_0^{s(x-\lambda_1n_1,n_0)}\left[\alpha_j^a\left(u_j\right)+\alpha^s_j\left(u_j\right)\right](x-\lambda_1n_1-rn_0)dr \right)\\
=\mathcal{R}_j^{(2)}(x)+\int_\stw dn_1\int_0^{D}d\lambda_1 Q_j^{(2)}(x-\lambda_1 n_1,n_1)\alpha_j^s(u_j(x-\lambda_1n_1))\\\times \exp\left(-\int_0^{\lambda_1}\left[\alpha_j^a\left(u_j\right)+\alpha^s_j\left(u_j\right)\right](x-r n_1)dr \right),
\end{multline*}
where
\begin{multline*}
 Q_j^{(2)}(x,\omega)=\int_\stw dn\ G(n) K(\omega,n) \exp\left(-\int_0^D\left[\alpha_j^a\left(u_j\right)+\alpha^s_j\left(u_j\right)\right](x-rn)dr \right)
\end{multline*}
and $$\left|\mathcal{R}_j^{(2)}(x)\right|\leq C(G,\Omega)\Arrowvert \alpha\Arrowvert_\infty \left(\theta +1\right)\left(\frac{1}{j}\right)^{\frac{1}{2}} .$$
Moreover, we see $ \left|Q_j^{(2)}(x,\omega)\right|\leq \Arrowvert G\Arrowvert_\infty $ and a similar computation to the one in \eqref{equicont.F.2} shows $$ \left|Q_j^{(2)}(x,\omega_1)-Q_j^{(2)}(x,\omega_2)\right|\leq \Arrowvert G\Arrowvert_{L^1}C_K d(\omega_1,\omega_2). $$ Hence, Proposition \eqref{prop.cpt} implies for $\vphi_j(x,\omega)=\alpha_j^s(u_j(x))Q_j^{(2)}(x,\omega)$ and $\psi_j=\alpha_j^a(u_j)+\alpha^s_j(u_j)$  the $ L^2 $-compactness of $ \mathcal{U}_2^j-\mathcal{R}_j^{(2)} $.

We proceed iteratively. We define for $ i\geq 3 $ the functions
\begin{multline}\label{Q.grey}
Q_j^{(i)}(x,\omega)=\int_\stw dn_2\int_0^D d\lambda_2 K(\omega,n_2) \alpha_j^s(u_j(x-\lambda_2n_2))\\\times \exp\left(-\int_0^{\lambda_2}\left[\alpha_j^a\left(u_j\right)+\alpha^s_j\left(u_j\right)\right](x-rn_2)dr \right)\\\times ...\times \int_\stw dn_i G(n_i)K(n_{i-1},n_i)\\\times \exp\left(-\int_0^{D}\left[\alpha_j^a\left(u_j\right)+\alpha^s_j\left(u_j\right)\right](x-\lambda_2n_2-...-rn_i)dr \right)
\end{multline}
and $ \mathcal{R}_j^{(i)}=  \mathcal{U}_i^j-Q_j^{(i)}$. Similarly as for the case $ i=2 $ we can estimate $$\left|Q_j^{(i)}(x,\omega)\right|\leq \theta^{(i-2)}\Arrowvert G\Arrowvert_\infty$$ and $$ \left|Q_j^{(i)}(x,\omega_1)-Q_j^{(i)}(x,\omega_2)\right|\leq \theta^{(i-2)}\Arrowvert G\Arrowvert_{L^1}C_K d(\omega_1,\omega_2). $$ Moreover, the remainder statisfies $$ \left|\mathcal{R}_j^{(i)}(x)\right|\leq C(G,\Omega)\Arrowvert \alpha\Arrowvert_\infty \theta^{(i-2)}\left(\frac{1}{j}\right)^{\frac{1}{2}} .$$

Again, Proposition \ref{prop.cpt} implies the $ L^2 $-compactness of $ \mathcal{U}_i^j(x)-\mathcal{R}_j^{(i)}(x) $. The compactness of $ \mathcal{U}_i^j(x)-\mathcal{R}_j^{(i)}(x) $ for $ 1\leq i<N_0(\beta) $ implies the existence of an $ h_1>0 $ such that
\begin{equation}
\int_\Omega dx \left|\mathcal{U}_i^j(x)-\mathcal{U}_i^j(x+h)\right|^2\leq \frac{\beta}{2N_0(\beta)}+|\Omega|N_0C(G,\Omega)\Arrowvert \alpha\Arrowvert_\infty \theta^{(i-2)}\left(\frac{1}{j}\right)
\end{equation}
for all $ |h|<h_1 $, for all $ j\geq 0 $ and for all $ 1\leq i<N_0 $. Hence,
\begin{multline}\label{cpt.scattering.bnd}
\int_\Omega dx\;\left|\mathcal{C}_{j}(u_{j})(x)-\mathcal{C}_{j}(u_{j})(x+h)\right|^2\\
\leq 2\beta |\Omega| +2N_0 \sum_{i=1}^{N_0-1} \int_\Omega dx\;\left|\mathcal{U}^{j}_i(x)- \mathcal{U}^{j}_i(x+h)\right|^2\\
\leq \beta \left(2|\Omega|+1\right)+C(\Omega,\Arrowvert\alpha\Arrowvert_\infty,G)N_0^2\frac{1}{j},
\end{multline}
for all $ |h|<h_1 $ and for all $ j\geq 0 $.
Putting equations \eqref{cpt.scattering.interior} and \eqref{cpt.scattering.bnd} in \eqref{cpt.scattering.2} we obtain for $ \beta>0 $, which was chosen arbitrary, the following estimate
	\begin{multline}\label{cpt.scattering.3}
\int_\Omega dx\;\left|\mathcal{B}_{j}(u_{j})(x)+\mathcal{C}_{j}(u_{j})(x)-\mathcal{B}_{j}(u_{j})(x+h)-\mathcal{C}_{j}(u_{j})(x+h)\right|^2\\\leq \beta \left(4|\Omega|+2\right)+C(\Omega,\Arrowvert\alpha\Arrowvert_\infty,G,M)N_0^2\frac{1}{j},
\end{multline}
for all $ |h|<\min\left(h_0,h_1\right) $ and for all $ j\geq 0 $. Taking now $ J_0= \frac{2N_0(\beta)^2}{\beta} $ we obtain
\begin{multline}\label{cpt.scattering.4}
\int_\Omega dx\;\left|\mathcal{B}_{j}(u_{j})(x)+\mathcal{C}_{j}(u_{j})(x)-\mathcal{B}_{j}(u_{j})(x+h)-\mathcal{C}_{j}(u_{j})(x+h)\right|^2\\\leq C(\Omega,\Arrowvert\alpha\Arrowvert_\infty,G,M)\beta,
\end{multline}
for all $ |h|<\min\left(h_0,h_1\right) $ and for all $ j\geq J_0 $. Since $ J_0\in\mathbb{N} $ is finite and $ \mathcal{B}_j(u_{j}) $ and $ \mathcal{C}_{j}(u_{j})$ are continuous there exists an $ H_0\leq \min\left(h_0,h_1\right) $ such that 
\begin{equation}\label{cpt.scattering.5}
\int_\Omega dx\;\left|\mathcal{B}_{j}(u_{j})(x)+\mathcal{C}_{j}(u_{j})(x)-\mathcal{B}_{j}(u_{j})(x+h)-\mathcal{C}_{j}(u_{j})(x+h)\right|^2\leq\beta,
\end{equation}
for all $ |h|<H_0 $ and $ 1\leq j<J_0 $. Hence, we have just proved that the uniformly bounded and tight sequence $ \mathcal{B}_{j}(u_{j})+\mathcal{C}_{j}(u_{j})$ is equicontinuous in $ L^2 $ and thus compact. There exists hence a subsequence $ u_{j_l} = \mathcal{B}_{j_l}(u_{j_l})+\mathcal{C}_{j_l}(u_{j_l})$ and a function $ u\in L^2(\Omega)\cap L^\infty(\Omega) $ such that $ u_{j_l} = \mathcal{B}_{j_l}(u_{j_l})+\mathcal{C}_{j_l}(u_{j_l})\to u $ in $ L^2(\Omega) $ and pointwise almost everywhere as $ l\to\infty $.

The uniformly boundedness of $u_{j_l}$ and also of $\alpha^a\left(u_{j_l}\right)$ and $ \alpha^s\left(u_{j_l}\right) $ implies the convergence in $L^p$ of $\alpha^a\left(u_{j_l}\right)*\phi_{j_l}\to \alpha^a(u)$ and $\alpha^s\left(u_{j_l}\right)*\phi_{j_l}\to \alpha^s(u)$ as $l\to \infty$ for $p<\infty$. Therefore for a subsequence (say still $u_{j_l}$) the convergence holds also pointwise almost everywhere. Finally a combination of the dominated convergence theorem for finitely many terms in terms in the Duhamel series and the convergence of such Duhamel series implies
\begin{equation}
u_{j_l}=\mathcal{B}_{j_l}(u_{j_l})+\mathcal{C}_{j_l}(u_{j_l})\to u=\mathcal{B}(u) +\mathcal{C}(u)
\end{equation}
pointwise almost everywhere as $l\to\infty$ and  $ u=\mathcal{B}(u) +\mathcal{C}(u) $ pointwise almost everywhere. Hence, $u$ is the desired solution to \eqref{fixedpointfull}.
\end{proof}
\subsection{Existence of solution for the pseudo Grey case}\label{Sec.full.pseudo}
We want to show the existence of solutions also in the pseudo Grey case, i.e. when the absorption and scattering coefficient depends also on the frequency via the relation $ \alpha_\nu^a(T(x))=Q_a(\nu)\alpha^a(T(x)) $ and $ \alpha_\nu^s(T(x))=Q_s(\nu)\alpha^s(T(x)) $. We assume that $ Q_i\in C^1\left(\mathbb{R}_+\right) $ and $ \alpha^i\in C^1\left(\mathbb{R}_+\right)$ for $ i=a,s $. It is not difficult to see that similarly as Theorem \ref{Grey thm} implies Proposition \ref{prop.cpt} and the Corollary \ref{corollary.cpt}, also Theorem \ref{main theorem pseudo grey} and the Federer-Besicovitch covering's lemma implies the following Proposition.

\begin{proposition}\label{prop.pseudo.grey}
Let $ \{\varphi_j\}_{j\in\mathbb{N}}\subset L^\infty\left(\Omega, L^1(\mathbb{R}_+)\right) $ and $ \{\psi_j\}_{j\in\mathbb{N}}\subset L^\infty\left(\Omega, L^1(\mathbb{R}_+)\right) $ be two non-negative bounded sequences with $ \Omega\subset \rth $  bounded, convex with $ C^1 $-boundary and strictly positive curvature. Let also $ f\in L^\infty\left(\stw, L^1(\mathbb{R}_+)\right) $ be non-negative. Then the sequences $$ \int_\stw dn \int_0^D dr\int_0^\infty d\nu \varphi_j(x-rn,\nu)\exp\left(-\int_0^r \psi_j(x-\lambda n,\nu)d\lambda\right)$$ and $$\int_\stw dn\int_0^\infty d\nu\;f(n,\nu)\exp\left(-\int_0^D \psi_j(x-\lambda n,\nu)d\lambda\right)$$ are compact in $ L^2(\Omega) $. 
If moreover $ \{\varphi_j\}_{j\in\mathbb{N}}\subset C\left(\stw,L^\infty\left(\Omega, L^1(\mathbb{R}_+)\right)\right) $ with $$\Arrowvert\vphi_j(\cdot,\cdot,\omega_1)-\vphi_j(\cdot,\cdot,\omega_2)\Arrowvert_{L^\infty\left(\Omega, L^1(\mathbb{R}_+)\right)}\leq \sigma(d(\omega_1,\omega_2))\to 0$$
as $ d(\omega_1,\omega_2)\to 0 $, where $ d $ is the metric on the sphere and $\sigma\in C\left(\mathbb{R}_+,\mathbb{R}_+\right)$ with $\sigma(0)=0$ is a uniform modulus of continuity, then the sequence
$$\int_\stw dn \int_0^D dr\int_0^\infty d\nu \varphi_j(x-rn,\nu,n)\exp\left(-\int_0^r \psi_j(x-\lambda n,\nu)d\lambda\right)$$ is also compact in $ L^2(\Omega) $.
\end{proposition}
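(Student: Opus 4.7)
The plan is to extend the argument that derives Corollary \ref{corollary.cpt} and Proposition \ref{prop.cpt} from Theorem \ref{Grey thm}, replacing the role of Theorem \ref{Grey thm} with Theorem \ref{main theorem pseudo grey}. All three sequences in the statement are uniformly bounded in $L^\infty(\Omega)$ (and hence in $L^2(\Omega)$), since the exponential factor is bounded by $1$ and the $L^1(\mathbb{R}_+)$-in-$\nu$ norms of $\varphi_j$ and $f$ are uniform in $j$. Thus it suffices to establish $L^2$ equi-integrability uniformly in $j$: for every $\beta>0$ there must exist $h_0>0$ such that any shift by $h\in\rth$ with $|h|<h_0$ changes each sequence by less than $\beta$ in $L^2(\Omega)$.

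The driver of the proof is a Taylor expansion of the exponential, exactly as used in the proof of Theorem \ref{main theorem pseudo grey}, yielding for each $r\in[0,D]$ and $\nu\geq 0$
\begin{equation*}
\exp\Bigl(-\int_0^r \psi_j(x-\lambda n,\nu)\,d\lambda\Bigr)=\sum_{m=0}^{\infty}\frac{(-1)^m}{m!}\Bigl(\int_0^r\psi_j(x-\lambda n,\nu)\,d\lambda\Bigr)^m.
\end{equation*}
After truncating at a finite order $M=M(\beta)$ (the tail being controlled uniformly in $(x,n,\nu,j)$ via the uniform bound on $\psi_j$, exactly as in the passage leading to \eqref{estimate2}), each finite-order term is rewritten via Fubini as an $m$-fold nested line integral with the $\nu$-integration performed first. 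The $\nu$-integrated integrand is then a uniformly bounded function of $x$ alone, and Lemma \ref{general compact} applied to each of the finitely many orders $m\leq M$ yields the required $L^2$ equi-integrability uniformly in $j$, in exact analogy with the treatment of \eqref{estimate2} in the proof of Theorem \ref{main theorem pseudo grey}. The boundary-type sequence with $f(n,\nu)\exp(-\int_0^D \psi_j\,d\lambda)$ is handled by the same recipe, with $f$ playing the role of the boundary coefficient $G$.

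For the direction-dependent case, once the two direction-independent statements above are established, one applies the Federer-Besicovitch covering argument used in the proofs of Corollary \ref{corollary.cpt1} and Proposition \ref{prop.cpt}: cover $\stw$ by finitely many small geodesic balls $B_\delta(n_{k,i})$ of bounded overlap multiplicity, replace the integrand $\varphi_j(\cdot,\nu,n)$ by $\varphi_j(\cdot,\nu,n_{k,i})$ on each such ball (the resulting error being uniformly controlled by the modulus of continuity $\sigma(\delta)$ together with the uniform $L^1(\mathbb{R}_+)$-in-$\nu$ bound), and conclude by summing the finitely many applications of the direction-independent result.

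The main technical obstacle is the careful handling of the $\nu$-integration when exchanging the order of integration inside each term of the Taylor expansion; this is where the argument benefits from first fixing the truncation order $M$ and then exploiting the uniform boundedness of the $\nu$-integrated product $\int_0^\infty d\nu\,\varphi_j(y_0,\nu)\prod_{i=1}^m\psi_j(y_i,\nu)$, in direct analogy with how $\int_0^\infty Q^{m+2}(\nu)f_\nu(u_j)(x)\,d\nu$ was uniformly controlled in the proof of Theorem \ref{main theorem pseudo grey}.
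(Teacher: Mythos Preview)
Your proposal is correct and follows essentially the same approach as the paper, which does not give a detailed proof but simply states that the proposition follows from the compactness machinery in the proof of Theorem \ref{main theorem pseudo grey} together with the Federer--Besicovitch covering lemma, in direct analogy with how Corollary \ref{corollary.cpt} and Proposition \ref{prop.cpt} were derived from Theorem \ref{Grey thm}. Your write-up actually spells out this analogy (Taylor expansion, truncation, Fubini to bring the $\nu$-integration inside, Lemma \ref{general compact} term by term, then the Besicovitch covering for the direction-dependent case) in more detail than the paper does.
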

Now we are ready to prove the existence of solution in the pseudo Grey case.
\begin{proof}[Proof of Theorem \ref{thm.full.pseudo.grey}]
	We proceed similarly as we did in the proof of Theorem \ref{thm full equation grey} indicating where differences arise. With the same notation as in Theorem \ref{thm full equation grey} we define therefore the Green functions $ \tI_\nu(x,n\;x_0) $ $ \psi_\nu(x,n;x_0,n_0) $ by
	\begin{multline}\label{eq.green.nu}
	n\cdot \nabla_x\tI_\nu(x,n;x_0)=Q_s(\nu)\alpha^s(T(x))\int_\stw K(n,n')\tI_\nu(x,n';x_0)\;dn'\\-\left(Q_a(\nu)\alpha^a\left(T(x)\right)+Q_s(\nu)\alpha^s\left(T(x)\right)\right)\tI_\nu(x,n;x_0)+\delta(x-x_0)
	\end{multline}
	with boundary condition for $ x\in\bnd $
	\begin{equation*}
	\tI_\nu(x,n;x_0)\chi_{\{n\cdot n_x<0\}}=0
	\end{equation*}
	and 
	\begin{equation}\label{eq.poisson.nu}
	\begin{split}
	n\cdot\nabla_x &\psi_\nu(x,n;x_0,n_0)=Q_s(\nu)\alpha^s(T(x))\int_\stw K(n,n')\psi_\nu(x,n';x_0,n_0)\;dn'\\
	&\ \ \ \ -\left(Q_a(\nu)\alpha^a\left(T(x)\right)+Q_s(\nu)\alpha^s\left(T(x)\right)\right)\psi_\nu(x,n;x_0,n_0),\  x\in\Omega,\\
	\psi_\nu(x,n;x_0,n_0)&\chi_{\{n\cdot n_x<0\}}=\delta_{\bnd}(x-x_0)\frac{\delta^{(2)}(n,n_0)}{4\pi},\  x\in\Omega, \ n_0\cdot N_{x_0}<0.
	\end{split}
	\end{equation}
	Then the intensity of radiation can be expressed in terms of these two functions as follows.
	\begin{multline}\label{I.green.poisson.nu}
	I_\nu(x,n)=\int_\Omega dx_0\; Q_a(\nu)\alpha^a(T(x_0))B_\nu(T(x_0))\tI_\nu(x,n;x_0)\\ +\int_\stw dn_0\int_{\bnd}dx_0\; g_\nu(n_0)\psi_\nu
	(x,n;x_0,n_0).
	\end{multline}
	Once again plugging in the definition of $ I_\nu(x,n) $ into equation \eqref{heat equation} we obtain the following fixed-point equation
	\begin{multline}\label{fixed.point.full.pseudo}
u(x)= \int_\stw dn \int_0^\infty d\nu \int_\Omega dx_0\; Q_a(\nu)^2 \alpha^a(u(x_0))B_\nu\left(F^{-1}(u(x_0))\right)\tI_\nu(x,n\;x_0)\\
+\int_0^\infty d\nu\int_\stw dn\int_\stw dn_0\int_\bnd dx_0\; Q_a(\nu)g_\nu(n_0)\psi_\nu(x,n;x_0,n_0),
	\end{multline}
	where $ u(x)=4\pi\int_0^\infty Q_a(\nu)B_\nu(T(x))= F(T(x)) $. Since $ B_\nu $ is a monotone function of the temperature, $ F $ is invertible.
	
Once more, we regularize the equation through a a sequence $ \phi_\eps $ of standard positive radial symmetric mollifiers. We hence define
$ I^\eps_\nu(x,n\;x_0) $ and $ \psi^\eps_\nu(x,n;x_0,n_0) $ by
\begin{multline}\label{eq.green.nu.eps}
n\cdot \nabla_xI^\eps_\nu(x,n;x_0)=Q_s(\nu)\alpha^s(T(\cdot))*\phi_\eps(x)\int_\stw K(n,n')I^\eps_\nu(x,n';x_0)\;dn'\\-\left(Q_a(\nu)\alpha^a\left(T(\cdot)\right)*\phi_\eps(x)+Q_s(\nu)\alpha^s\left(T(\cdot)\right)*\phi_\eps(x)\right)I^\eps_\nu(x,n;x_0)+\delta(x-x_0)
\end{multline}
with boundary condition for $ x\in\bnd $
\begin{equation*}
I_\nu^\eps(x,n;x_0)\chi_{\{n\cdot n_x<0\}}=0
\end{equation*}
and 
\begin{equation}\label{eq.poisson.nu.eps}
\begin{split}
&n\cdot\nabla_x \psi^\eps_\nu(x,n;x_0,n_0)=Q_s(\nu)\alpha^s(T(\cdot))*\phi_\eps(x)\int_\stw K(n,n')\psi_\nu^\eps(x,n';x_0,n_0)\;dn'\\
&\ \ \ \ -\left(Q_a(\nu)\alpha^a\left(T(\cdot)\right)*\phi_\eps(x)+Q_s(\nu)\alpha^s\left(T(\cdot)\right)*\phi_\eps(x)\right)\psi^\eps_\nu(x,n;x_0,n_0),\  x\in\Omega,\\
&\psi^\eps_\nu(x,n;x_0,n_0)\chi_{\{n\cdot n_x<0\}}=\delta_{\bnd}(x-x_0)\frac{\delta^{(2)}(n,n_0)}{4\pi},\  x\in\Omega, \ n_0\cdot N_{x_0}<0.
\end{split}
\end{equation}
The associated regularized fixed-point equation for $$ u(x)=\int_0^\infty d\nu\  Q_a(\nu)B_\nu(T(x))=F(T(x)) ,$$ is defined for any $ \eps>0 $ by
\begin{multline}\label{reg.fix.point.pseudo}
u_\eps(x)=\int_0^\infty d\nu Q_a(\nu)^2\int_\Omega dx_0\int_\stw dn\;\alpha^a_\eps(x_0)B_\nu\left(F^{-1}(u_\eps(x_0))\right)I_\nu^\eps(x,n;x_0)\\
+\int_0^\infty d\nu Q_a(\nu)\int_\stw dn_0\int_\stw dn\int_{\bnd}dx_0\;G(n_0)\psi_\nu^\eps(x,n;x_0,n_0)\\
\eqdef \mathcal{B}_\eps(u_\eps)(x)+\mathcal{C}_\eps(u_\eps)(x),
\end{multline}
where $ u_\eps $ is the solution of the fixed-point equation for $ \eps>0 $ and we used the notation $ \alpha_\eps^i(x)=\alpha^i(u_\eps(\cdot))*\phi_\eps(x) $ for $ i=a,s $. The same reasoning and computations we did in Subsection \eqref{Sec.full.green} hold also in this case, so that we can write the explicit recursive formula for both $ I_\nu^\eps $ and $ \psi_\nu^\eps $ as
\begin{multline}\label{green exact reg pseudo}
I_\nu^\eps(x,n;x_0)=\chi_\Omega(x_0)\exp\left(-\int_{[x_0,x]}\left[Q_a(\nu)\alpha_\eps^a(\xi)+Q_s(\nu)\alpha_\eps^s(\xi)\right]d\xi\right)\\\times H(n\cdot(x-x_0))\delta^{(2)}\left(P_n^\perp(x-x_0)\right)\\
+\int_0^{s(x,n)}dt\; Q_s(\nu)\alpha_\eps^s(x-tn)\exp\left(-\int_{[x-tn,x]}\left[Q_a(\nu)\alpha_\eps^a(\xi)+Q_s(\nu)\alpha_\eps^s(\xi)\right]d\xi\right)\\\times\int_\stw dn'K(n,n')I_\eps(x-tn,n';x_0),
\end{multline}
and
\begin{multline}\label{poisson formula reg pseudo}
\psi_\nu^\eps(x,n;x_0,n_0)=|n_0\cdot N_{x_0}|H(n_0\cdot(x-x_0))\\\times\delta^{(2)}\left(P_{n_0}^\perp (x-x_0)\right)\frac{\delta^{(2)}(n,n_0)}{4\pi}\exp\left(-\int_{[x-tn,x]}\left[Q_a(\nu)\alpha_\eps^a(\xi)+Q_s(\nu)\alpha_\eps^s(\xi)\right]d\xi\right)\\
+\int_0^{s(x,n)}dt\; Q_s(\nu)\alpha_\eps^s(x-tn)\exp\left(-\int_{[x-tn,x]}\left[Q_a(\nu)\alpha_\eps^a(\xi)+Q_s(\nu)\alpha_\eps^s(\xi)\right]d\xi\right)\\\times\int_\stw dn'K(n,n')\psi_\eps(x-tn,n';x_0,n_0).
\end{multline}
With these expressions we recover also the Duhamel representation of the bulk and boundary operators by
\begin{multline}\label{duhamel.interior.pseudo}
\mathcal{B}_\eps(u)(x)=\int_0^\infty d\nu Q_a(\nu)\int_\Omega dx_0\; \frac{Q_a(\nu)\alpha_\eps^a(x_0)B_\nu\left(u(x_0)\right)}{|x-x_0|^2}\\\times\exp\left(-\int_{[x_0,x]}\left[Q_a(\nu)\alpha_\eps^a(\xi)+Q_s(\nu)\alpha_\eps^s(\xi)\right]d\xi\right)\\+\int_0^\infty d\nu Q_a(\nu)\int_\Omega d\eta\int_\Omega d\eta_1\;K\left(\frac{x-\eta}{|x-\eta|},\frac{\eta-\eta_1}{|\eta-\eta_1|}\right)\frac{Q_s(\nu)\alpha_\eps^s(\eta)}{|x-\eta|^2}\\\times\exp\left(-\int_{[\eta,x]}\left[Q_a(\nu)\alpha_\eps^a(\xi)+Q_s(\nu)\alpha_\eps^s(\xi)\right]d\xi\right)\frac{Q_a(\nu)\alpha_\eps^a(\eta_1)B_\nu\left(u(\eta_1)\right)}{|\eta-\eta_1|^2}\\\times\exp\left(-\int_{[\eta_1,\eta]}\left[Q_a(\nu)\alpha_\eps^a(\xi)+Q_s(\nu)\alpha_\eps^s(\xi)\right]d\xi\right) \\+\int_0^\infty d\nu Q_a(\nu)\int_\Omega d\eta\int_\Omega d\eta_1\int_\Omega d\eta_2\;A_\eps^s(\eta,x-\eta,\nu)A_\eps^s(\eta_1,\eta-\eta_1,\nu)\\\times A_\eps^a(\eta_2,\eta_1-\eta_2,\nu)B_\nu(u(\eta_2)) K\left(\frac{x-\eta}{|x-\eta|},\frac{\eta-\eta_1}{|\eta-\eta_1|}\right) K\left(\frac{\eta-\eta_1}{|\eta-\eta_1|},\frac{\eta_1-\eta_2}{|\eta_1-\eta_2|}\right)\\
+\cdots
=\sum_{i=1}^\infty \mathcal{V}^\eps_i(u)(x),
\end{multline}
where we used the definition $$ A_\eps^i(z,y-z,\nu)=\frac{Q_i(\nu)\alpha_\eps^i(z)}{|y-z|^2}\exp\left(-\int_{[z,y]}\left[Q_a(\nu)\alpha_\eps^a(\xi)+Q_s(\nu)\alpha_\eps^s(\xi)\right]d\xi\right). $$
For the boundary operator we obtain similarly
\begin{multline}\label{duhamel.boundary.pseudo}
\mathcal{C}_\eps(u)(x)\\=\int_0^\infty d\nu\  Q_a(\nu)\int_\stw dn\;g_\nu(n)\exp\left(-\int_{[y(x,n),x]}\left[Q_a(\nu)\alpha_\eps^a(u)+Q_s(\nu)\alpha_\eps^s(u)\right]d\xi\right)\\+\int_0^\infty d\nu\  Q_a(\nu)\int_\Omega d\eta A_\eps^s(\eta,x-\eta,\nu)\int_\stw dn_0 g_\nu(n_0)\\\times\exp\left(-\int_{[y(\eta,n_0),\eta]}\left[Q_a(\nu)\alpha_\eps^a(u)+Q_s(\nu)\alpha_\eps^s(u)\right]d\xi\right)K\left(\frac{x-\eta}{|x-\eta|},n_0\right)\\
+\int_0^\infty d\nu\  Q_a(\nu)\int_\Omega d\eta A_\eps^s(\eta,x-\eta,\nu)\int_\Omega  d\eta_1\;A_\eps^s(\eta_1,\eta-\eta_1,\nu)\\\times K\left(\frac{x-\eta}{|x-\eta|},\frac{\eta-\eta_1}{|\eta-\eta_1|}\right)\int_\stw dn_0 g_\nu(n_0)\\\times\exp\left(-\int_{[y(\eta_1,n_0),\eta]}\left[Q_a(\nu)\alpha_\eps^a(u)+Q_s(\nu)\alpha_\eps^s(u)\right]d\xi\right)K\left(\frac{\eta-\eta_1}{|\eta-\eta_1|},n_0\right)\\+\cdots=\sum_{i=1}^{\infty}\mathcal{U}^\eps_i(u)(x).
\end{multline}
It can be shown, as we did in the pure Grey case, that the operator $ \mathcal{B}_\eps $ is a contraction, while the operator $ \mathcal{C}_\eps $ is bounded. For the first claim, we need to use a new version of the weak maximum-principle. We see that defining the function $ H_\eps(x,n,\nu) $ by 
$$ H_\eps(x,n,\nu)=\int_\Omega dx_0\;Q_a(\nu)\alpha_\eps^a(x_0)I^\eps_\nu\nu(u(x_0))$$
it satisfies the equation 
\begin{multline}\label{eq.H.eps.pseudo}
0=L_\eps(H_\eps)(x,n,\nu)-Q_a(\nu)\alpha_\eps^a(x)\\=n\cdot \nabla_x H_\eps(x,n,\nu)-Q_a(\nu)\alpha_\eps^a(x)\left(1-H_\eps(x,n,\nu)\right)\\-Q_s(\nu)\alpha_\eps^s(x)\int_\stw dn'\;K(n,n')\left(H_\eps(x,n,\nu)-H_\eps(x,n',\nu)\right).
\end{multline}
Notice that also in this case by definition $ H_\eps $ is non-negative, continuous and bounded, where the last assertions are due to its Duhamel expansion. Defining the adjoint operator by \begin{multline}\label{adjoint.max.principle.pseudo}
L^*_\eps(\varphi)(x,n,\nu)=-n\cdot \nabla_x \varphi(x,n,\nu)+\left(Q_a(\nu)\alpha_\eps^a(x)+Q_s(\nu)\alpha_\eps^s(x)\right)\varphi(x,n,\nu)\\-Q_s(\nu)\alpha_\eps^s(x)\int_\stw dn'\;K(n,n')\varphi(x,n',\nu)
\end{multline} 
we use the following weak-maximum principle
\begin{lemma}\label{weak.max.pseudo}
    If a continuous bounded $ F(x,n,\nu) $ satisfies the boundary condition $ F(x,n,\nu)\geq 0 $ for $ x\in\bnd $ and $ n\cdot n_x<0 $ and the inequality $$ \int_0^\infty d\nu\int_\stw dn\;\int_\Omega dx\;L^*_\eps(\varphi)(x,n,\nu)F(x,n,\nu)\geq 0 $$ for all non-negative $ \varphi\in C^1\left(\bar{\Omega}\times\stw\times\mathbb{R}_+\right) $ with $ \varphi(x,n,\nu)=0 $ for $ x\in\bnd $ and $ n\cdot n_x\geq0 $, then $ F(x,n,\nu)\geq 0 $ for all $ x,n,\nu\in\Omega\times\stw\times \mathbb{R}_+ $.
    \begin{proof}
    We assume that Lemma \ref{weak.max.pseudo} is not true. Hence, there exists an open set $ U\subset\bar{\Omega}\times\stw\times \mathbb{R}_+  $ such that $ F(x,n,\nu)<0 $ there. Taking then a function $ \xi\in C^1_c(U) $ with $ \xi\geq 0 $ and $ \xi\neq 0 $ we define the non-negative continuously differentiable function $ \vphi(x,n,\nu) $ by $$ L^*_\eps(\varphi)(x,n,\nu)=\xi(x,n,\nu) $$ and with boundary condition $ \varphi(x,n,\nu)=0 $ for $ x\in\bnd $ and $ n\cdot n_x>0 $. As we did in the proof of Lemma \ref{weakmax} one can show that $ \vphi\geq 0 $ and that it is continuously differentiable in all variables. Finally, one uses the constructed function in order to obtain a contradiction since 
\begin{multline*}
    0\leq \int_0^\infty d\nu\int_\stw dn\int_\Omega dxL^*_\eps(\varphi)(x,n,\nu)F(x,n,\nu)\\=\int_U d\nu dn dx \xi(x,n,\nu)F(x,n,\nu)<0.
\end{multline*}
\end{proof}
\end{lemma}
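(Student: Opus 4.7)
The plan is to adapt Lemma \ref{weakmax} to the pseudo Grey setting, with the frequency $\nu$ entering only as a parameter. Argue by contradiction: if $F$ takes a negative value somewhere, its continuity produces an open set $U\subset\Omega\times\stw\times\mathbb{R}_+$ on which $F<0$; pick a non-negative $\xi\in C^1_c(U)$ with $\xi\not\equiv 0$. The goal is to find a non-negative $\varphi\in C^1(\bar\Omega\times\stw\times\mathbb{R}_+)$ solving $L^*_\eps(\varphi)=\xi$ with $\varphi(x,n,\nu)=0$ when $x\in\bnd$ and $n\cdot n_x>0$, because then testing against $F$ yields
\[
0\le\int_0^\infty d\nu\int_\stw dn\int_\Omega dx\; L^*_\eps(\varphi)(x,n,\nu)\,F(x,n,\nu)=\int_U \xi\,F<0,
\]
the desired contradiction.

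I construct $\varphi$ by integrating $L^*_\eps(\varphi)=\xi$ along adjoint characteristics (traveling in the direction $-n$) from the zero outgoing boundary datum. This yields the recursive relation
\begin{multline*}
\varphi(x,n,\nu)=\int_0^{s(x,-n)}\xi(x+tn,n,\nu)\,\exp\!\left(-\int_0^t[Q_a(\nu)\alpha_\eps^a+Q_s(\nu)\alpha_\eps^s](x+\tau n)\,d\tau\right)dt\\
+\int_0^{s(x,-n)}Q_s(\nu)\alpha_\eps^s(x+tn)\,\exp\!\left(-\int_0^t[Q_a(\nu)\alpha_\eps^a+Q_s(\nu)\alpha_\eps^s](x+\tau n)\,d\tau\right)\\
\times\int_\stw K(n,n')\,\varphi(x+tn,n',\nu)\,dn'\,dt,
\end{multline*}
and iterating produces a Duhamel series with non-negative summands. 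Since $\xi$ has compact support in $\nu$ and $Q_a,Q_s\in C^1(\mathbb{R}_+)$, the values $Q_a(\nu)+Q_s(\nu)$ are uniformly bounded by some $\bar Q$ on the effective $\nu$-range; together with $\int_\stw K(n,n')\,dn'=1$ and the elementary estimate $\int_0^D \bar Q\|\alpha\|_\infty e^{-\bar Q\|\alpha\|_\infty r}\,dr\le 1-e^{-\bar Q\|\alpha\|_\infty D}=:\theta<1$, the $i$-th iterate is bounded pointwise by $\|\xi\|_\infty D\,\theta^{i-1}$. This yields absolute convergence of the series and the non-negativity, boundedness, and continuity of $\varphi$.

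The main technical obstacle is the $C^1$ regularity of $\varphi$ in $x$: the travel time $s(x,-n)$ loses smoothness near directions tangent to $\bnd$. However, since $\xi$ is compactly supported in $U\subset\Omega\times\stw\times\mathbb{R}_+$ (hence bounded away from the grazing set) and $Q_i,\alpha_\eps^i\in C^1$, each Duhamel term can be differentiated in $x$ and $\nu$ term by term. The series for the derivatives converges by the same geometric bound, now enhanced by the previously established uniform bound on $\varphi$; this is exactly the differentiability argument sketched in Lemma \ref{weakmax}, adapted to carry the harmless extra $\nu$-dependence. With $\varphi$ so constructed, the contradiction displayed above completes the proof.
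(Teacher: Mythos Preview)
Your proof is correct and follows essentially the same approach as the paper: argue by contradiction, choose a non-negative $\xi\in C^1_c(U)$ supported where $F<0$, solve $L^*_\eps(\varphi)=\xi$ with zero outgoing boundary data via characteristics and a Duhamel expansion, and use the resulting non-negative $\varphi$ as a test function to reach a contradiction. You actually spell out the recursive formula and the geometric convergence of the Duhamel series more explicitly than the paper, which simply refers back to the proof of Lemma~\ref{weakmax}; one small remark is that your parenthetical justification ``hence bounded away from the grazing set'' is not literally implied by $U\subset\Omega\times\stw\times\mathbb{R}_+$, but this matches the level of detail in the paper's own treatment of the $C^1$ regularity of $\varphi$.
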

Using the fact that $ L_\eps(1-H_\eps)(x,n,\nu)=0 $ and the weak maximum principle in Lemma \ref{weak.max.pseudo} we conclude that $ 0\leq H_\eps\leq 1 $, where we used that by definition $ H_\eps(x,n,\nu)=0 $ for $ x\in\bnd $ and $ n\cdot n_x<0 $. Once again, using the recursive formula for $ H_\eps(x,n,\nu) $ and the estimate $$\int_0^D dr |f(x-rn)|\exp\left(-\int_0^r |f(x-tn)|dt\right)\leq 1-e^{-\Arrowvert f\Arrowvert_\infty D }<1,$$ we obtain, by defining $ \theta=1-e^{-\Arrowvert \alpha_\nu\Arrowvert D}<1 $ for $ \Arrowvert\alpha_\nu\Arrowvert=\Arrowvert Q_a\alpha^a+Q_s\alpha^s\Arrowvert_\infty $, the following estimate
\begin{multline*}
0\leq H_\eps(x,n,\nu)=\int_0^{s(x,n)} dt\;\alpha_\eps^a(x-tn)Q_a(\nu)\\\times\exp\left(-\int_0^t \left[Q_a(\nu)\alpha_\eps^a(x-rn)+Q_s(\nu)\alpha_\eps^s(x-rn)\right]dr\right)\\
+\int_0^{s(x,n)} dt\; \alpha_\eps^s(x-tn)Q_s(\nu)\exp\left(-\int_0^t \left[Q_a(\nu)\alpha_\eps^a(x-rn)+Q_s(\nu)\alpha_\eps^s(x-rn)\right]dr\right)\\\times\int_\stw dn'K(n,n')H_\eps(x-tn,n',\nu)\\
\leq \int_0^{s(x,n)}dt \left(\alpha_\eps^a(x-tn)Q_a(\nu)+\alpha_\eps^s(x-tn)Q_s(\nu)\right)\\\times\exp\left(-\int_0^t \left[Q_a(\nu)\alpha_\eps^a(x-rn)+Q_s(\nu)\alpha_\eps^s(x-rn)\right]dr\right)\leq \theta<1. 
\end{multline*}
Hence, we conclude the contractivity of the bulk operator via
\begin{multline*}
0\leq \mathcal{B}_\eps(u)(x)=\int_0^\infty d\nu Q_a(\nu)^2\int_\Omega dx_0\int_\stw dn\;\alpha^a_\eps(x_0)u_\eps(x_0)I_\nu^\eps(x,n;x_0)\\
\leq \int_0^\infty d\nu Q_a(\nu) B_\nu\left(F^{-1}(\Arrowvert u\Arrowvert_\infty)\right)H_\eps(x,n,\nu)\\\leq \theta F\left(F^{-1}(\Arrowvert u\Arrowvert_\infty)\right)=\theta \Arrowvert u\Arrowvert_\infty.
\end{multline*}
On the other hand also the boundary term is bounded, indeed in the same way as we had in the pure grey case using the fact that $$ \int_0^\infty d\nu Q_a(\nu) \int_\stw dn\ g_\nu(n)\leq \Arrowvert Q\Arrowvert_\infty \Arrowvert g\Arrowvert,$$ we obtain $$ \left|\mathcal{U}^\eps_i(x)\right|\leq  \Arrowvert Q\Arrowvert_\infty \Arrowvert g\Arrowvert \theta^{i-1} ,$$ for $ \theta=1-e^{-\Arrowvert \alpha_\nu\Arrowvert D}<1  $. 
Hence, the Duhamel series is absolutely convergent and $$ \left|\mathcal{C}_\eps(u)(x)\right|\leq C(Q,\alpha,D,g)<\infty .$$ Moreover, the continuity of the operator $ \mathcal{B}_\eps+\mathcal{C}_\eps $ can be shown using the convergence of the Duhamel expansions as we argued in Subsection \ref{Sec.full.reg.existence}. 

Thus, the operator $ \mathcal{B}_\eps+\mathcal{C}_\eps $ is a continuous self-map on the set $ \{u\in L^\infty(\Omega):0\leq u\leq M\} $ for some $ M>0 $ large enough. Moreover, in the same way as we have shown the H\"older continuity in the pure Grey case in Subsection \ref{Sec.full.reg.existence}, using that $$ \int_0^\infty d\nu\ Q_a(\nu) B_\nu\left(F^{-1}(\Arrowvert u\Arrowvert_\infty)\right)\leq  \Arrowvert u\Arrowvert_\infty,$$ we can show that $ \mathcal{B}_\eps+\mathcal{C}_\eps $ acting on $ \{u\in C(\Omega):0\leq u\leq M\} $ is a continuous self-map mapping continuous functions to H\"older continuous function, hence it is a compact continuous self-map. Schauder's fixed-point theorem implies the existence of regularized solutions $ u_\eps $ to the equation \eqref{reg.fix.point.pseudo}.\\

We are ready for the last step of the proof. We want to show the compactness of the sequence of regularized solutions $ u_{\frac{1}{j}}\eqdef u_j $. To this end we will use Proposition \ref{prop.pseudo.grey}. We proceed in the same way as in the proof of the pure Grey case of Theorem \ref{thm full equation grey}. We  write the terms in the Duhamel expansion of the bulk and boundary operators in spherical coordinates. For the interior terms we replace the definition of $ F_j^{i} $ in \eqref{F.grey} by
\begin{multline*}
F_j^{(i)}(x,\omega,\nu)=\int_0^D d\lambda_2...\int_0^D d\lambda_i \int_\stw dn_2...\int_\stw dn_i\; K(\omega,n_2)...K(n_{i-1},n_i)\\\times Q_s(\nu)\alpha^s_j(x-\lambda_2 n_2)\exp\left(-\int_0^{\lambda_2} \left[Q_a(\nu)\alpha_j^a(x-r n_2)+Q_s(\nu)\alpha^s_j(x-r n_2)\right]dr\right)\\\times ...\times B_\nu\left(F^{-1}(u_j(x-\lambda_2n_2+..-\lambda_in_i))\right)Q_a(\nu)\alpha^a_j(x-\lambda_2n_2+..-\lambda_in_i)\\\times\exp\left(-\int_0^{\lambda_i} \left[Q_a(\nu)\alpha_j^a(x-\lambda_2n_2+..-rn_i)+Q_s(\nu)\alpha^s_j(x-\lambda_2n_2+..-rn_i)\right]dr\right),
\end{multline*}
for $ i\geq 2 $ and $ F_j^{(1)}(x,\omega,\nu)=Q_a(\nu)B_\nu\left(F^{-1}(u)(x)\right) $. We notice that $$ 0\leq\int_0^\infty d\nu F_j^{(i)}(x,\omega,\nu)\leq \theta^{i-1}M. $$ Moreover, $ F_j^{(i)} $ is also uniformly continuous with respect to the variable $ \omega $. Then, Proposition \ref{prop.pseudo.grey} implies that all terms of the form
\begin{multline*}
\tilde{\mathcal{V}}_j^i=\int_0^\infty d\nu \int_0^Ddt\int_\stw dn F_j^{(i)}(x-tn,n,\nu)Q_s(\nu)\alpha_j^s(x-tn)\\\times \exp\left(-\int_0^{t} \left[Q_a(\nu)\alpha_j^a(x-r n_2)+Q_s(\nu)\alpha^s_j(x-r n_2)\right]dr\right),
\end{multline*}
for $ i\geq 2 $ and 
\begin{multline*}
\tilde{\mathcal{V}}_j^1=\int_0^\infty d\nu \int_0^Ddt\int_\stw dn F_j^{(1)}(x-tn,n,\nu)Q_a(\nu)^2\alpha_j^a(x-tn)\\\times \exp\left(-\int_0^{t} \left[Q_a(\nu)\alpha_j^a(x-r n_2)+Q_s(\nu)\alpha^s_j(x-r n_2)\right]dr\right),
\end{multline*}
are compact in $ L^2(\Omega) $. Since the error terms can be still estimated by $$ \left|\mathcal{V}_j^i(x)-\tilde{\mathcal{V}}_j^i(x)\right|\leq C(M,\Arrowvert\alpha\Arrowvert,\Arrowvert Q\Arrowvert, \Omega)\theta^{i-1}\frac{1}{j} $$ and the Duhamel series is convergent, for any $ \beta $ there exists some $ N_0>0 $ and an $ h_0>0 $ such that 
\begin{multline}\label{cpt.scattering.interior.pseudo}
\int_\Omega dx\;\left|\mathcal{B}_{j}(u_{j})(x)-\mathcal{B}_{j}(u_{j})(x+h)\right|^2\\
\leq \beta \left(2|\Omega|+1\right)+C(\Omega,\Arrowvert\alpha\Arrowvert,\Arrowvert Q\Arrowvert,M)N_0^2\frac{1}{j},
\end{multline}
for all $ |h|<h_0 $ and for all $ j\geq 0 $.\\

In a very similar way we consider the terms in the Duhamel expansion of the boundary operator written in spherical coordinates. Here we replace the definition of $ Q_j^{(i)} $ in \eqref{Q.grey} by
\begin{multline}\label{Q.grey.pseudo}
Q_j^{(i)}(x,\omega,\nu)=\int_\stw dn_2\int_0^D d\lambda_2 K(\omega,n_2) Q_s(\nu)\alpha_j^s(x-\lambda_2n_2)\\\times \exp\left(-\int_0^{\lambda_2}\left[Q_a(\nu)\alpha_j^a\left(u_j\right)+Q_s(\nu)\alpha^s_j\left(u_j\right)\right](x-rn_2)dr \right)\\\times ...\times \int_\stw dn_i Q_a(\nu)g_\nu(n_i)K(n_{i-1},n_i)\\\times \exp\left(-\int_0^{D}\left[Q_a(\nu)\alpha_j^a\left(u_j\right)+Q_s(\nu)\alpha^s_j\left(u_j\right)\right](x-\lambda_2n_2-...-rn_i)dr \right),
\end{multline}
for $ i\geq 2 $. Again, $ Q_j^{(i)} $ satisfies the assumption of Proposition \ref{prop.pseudo.grey} with $$ 0\leq \int_0^\infty d\nu\  Q_j^{(i)}(x,\omega,\nu)\leq \Arrowvert Q\Arrowvert_\infty\Arrowvert g\Arrowvert_{L^\infty\left(\stw, L^1(\mathbb{R}_+)\right)}\theta^{i-2} .$$ Hence, all terms of the form
\begin{multline*}
\tilde{\mathcal{U}}_j^i=\int_0^\infty d\nu \int_0^Ddt\int_\stw dn\  Q_j^{(i)}(x-tn,n,\nu)Q_s(\nu)\alpha_j^s(x-tn)\\\times \exp\left(-\int_0^{t} \left[Q_a(\nu)\alpha_j^a(x-r n_2)+Q_s(\nu)\alpha^s_j(x-r n_2)\right]dr\right),
\end{multline*}
for $ i\geq 2 $ and 
\begin{multline*}
\tilde{\mathcal{U}}_j^1=\int_0^\infty d\nu \int_\stw dn \  Q_a(\nu)g_\nu(n)\\\times \exp\left(-\int_0^{D} \left[Q_a(\nu)\alpha_j^a(x-r n_2)+Q_s(\nu)\alpha^s_j(x-r n_2)\right]dr\right),
\end{multline*}
are compact in $ L^2(\Omega) $. Once more, the error terms can be still estimated by $$ \left|\mathcal{U}_j^i(x)-\tilde{\mathcal{U}}_j^i(x)\right|\leq C(\Arrowvert g\Arrowvert,\Arrowvert\alpha\Arrowvert,\Arrowvert Q\Arrowvert, \Omega)\theta^{i-2}\frac{1}{j} $$ and  $$ \left|\mathcal{U}_j^1(x)-\tilde{\mathcal{U}}_j^1(x)\right|\leq C(\Arrowvert g\Arrowvert,\Arrowvert\alpha\Arrowvert,\Arrowvert Q\Arrowvert, \Omega)\frac{1}{j} .$$ This together with the absolute convergence of the Duhamel series implies for any $ \beta $ the existence of some $ N_0>0 $ and an $ h_1>0 $ such that 
\begin{multline}\label{cpt.scattering.bnd.pseudo}
\int_\Omega dx\;\left|\mathcal{C}_{j}(u_{j})(x)-\mathcal{C}_{j}(u_{j})(x+h)\right|^2\\
\leq \beta \left(2|\Omega|+1\right)+C(\Omega,\Arrowvert\alpha\Arrowvert,\Arrowvert Q\Arrowvert,\Arrowvert g\Arrowvert)N_0^2\frac{1}{j},
\end{multline}
for all $ |h|<h_1 $ and for all $ j\geq 0 $.
Now we can conclude exactly as in the proof of Theorem \ref{thm full equation grey} that the sequence $ u_j= \mathcal{B}_{j}(u_{j})+\mathcal{C}_{j}(u_{j})$ is compact in $ L^2 $. Extracting a subsequence converging pointwise almost every where to some $ u $ and arguing with the dominated convergence theorem and the absolute convergence of the Duhamel series we can show the existence of a solution to the fixed-point equation \eqref{fixed.point.full.pseudo}.
\end{proof}
\section*{Acknowledgement} The authors gratefully acknowledge the support of the grant CRC
1060 ``The Mathematics of Emergent Effects" (Project-ID 211504053) of the University of Bonn funded
through the Deutsche Forschungsgemeinschaft (DFG, German Research Foundation). E. Demattè is supported by the Bonn International Graduate School of Mathematics (BIGS) at the Hausdorff Center for Mathematics. J. W. Jang is supported by the National Research Foundation of Korea (NRF) grants RS-2023-00210484, RS-2023-00219980, and 2021R1A6A1A10042944, and also by Posco Holdings Inc. and Samsung Electronics Co., Ltd. E. Demattè and J. J. L. Vel\'azquez are also funded by DFG under Germany's Excellence Strategy-EXC-2047/1-390685813.

\section*{Statements and Declarations}
The authors have no relevant financial or non-financial interests to disclose. Data sharing not applicable to this article as no datasets were generated or analysed during the current study.
\bibliographystyle{amsplain3links}
\bibliography{bibliography.bib}{}

\providecommand{\bysame}{\leavevmode\hbox to3em{\hrulefill}\thinspace}
\providecommand{\href}[2]{#2}
\begin{thebibliography}{10}
\expandafter\ifx\csname urlpdf\endcsname\relax
  \def\urlpdf#1{\url{#1}}\fi
\expandafter\ifx\csname arxiv\endcsname\relax
  \def\arxiv#1{\burlalt{arXiv:#1}{http://arxiv.org/abs/#1}}\fi
\expandafter\ifx\csname doi\endcsname\relax
  \def\doi#1{\burlalt{doi:#1}{http://dx.doi.org/#1}}\fi
\expandafter\ifx\csname href\endcsname\relax
  \def\href#1#2{#2}\fi
\expandafter\ifx\csname burlalt\endcsname\relax
  \def\burlalt#1#2{\href{#2}{#1}}\fi

\bibitem{NouriArkeryd}
L.~Arkeryd and A.~Nouri, \emph{Discrete velocity {B}oltzmann equations in the
  plane: stationary solutions}, Anal. PDE \textbf{16} (2023), no.~8,
  1869--1884, \doi{10.2140/apde.2023.16.1869}.

\bibitem{B2}
C.~Bardos, R.~Caflisch, and B.~Nicolaenko, \emph{Different aspects of the
  {M}ilne problem (based on energy estimates)}, Proceedings of the conference
  on mathematical methods applied to kinetic equations ({P}aris, 1985),
  vol.~16, 1987, pp.~561--585, \doi{10.1080/00411458708204306}.

\bibitem{B1}
C.~Bardos, F.~Golse, and B.~Perthame, \emph{The radiative transfer equations:
  existence of solutions and diffusion approximation under accretivity
  assumptions---a survey}, Proceedings of the conference on mathematical
  methods applied to kinetic equations ({P}aris, 1985), vol.~16, 1987,
  pp.~637--652, \doi{10.1080/00411458708204308}.

\bibitem{B3}
\bysame, \emph{The {R}osseland approximation for the radiative transfer
  equations}, Comm. Pure. Appl. Math. \textbf{40} (1987), no.~6, 691--721,
  \doi{https://doi.org/10.1002/cpa.3160400603}.

\bibitem{Golse-Bardos}
C.~Bardos, F.~Golse, B.~Perthame, and R.~Sentis, \emph{The nonaccretive
  radiative transfer equations: existence of solutions and {R}osseland
  approximation}, J. Funct. Anal. \textbf{77} (1988), no.~2, 434--460,
  \doi{10.1016/0022-1236(88)90096-1}.

\bibitem{MR3324151}
G.~Basile, A.~Nota, F.~Pezzotti, and M.~Pulvirenti, \emph{Derivation of the
  {F}ick's law for the {L}orentz model in a low density regime}, Comm. Math.
  Phys. \textbf{336} (2015), no.~3, 1607--1636,
  \doi{10.1007/s00220-015-2306-z}.

\bibitem{MR533346}
A.~Bensoussan, J.-L. Lions, and G.~C. Papanicolaou, \emph{Boundary layers and
  homogenization of transport processes}, Publ. Res. Inst. Math. Sci.
  \textbf{15} (1979), no.~1, 53--157, \doi{10.2977/prims/1195188427}.

\bibitem{BBS}
C.~Boldrighini, L.~A. Bunimovich, and Ya.~G. Sina\u{\i}, \emph{On the
  {B}oltzmann equation for the {L}orentz gas}, J. Statist. Phys. \textbf{32}
  (1983), no.~3, 477--501, \doi{10.1007/BF01008951}.

\bibitem{MR1234995}
C.~Boutin and J.-L. Auriault, \emph{Rayleigh scattering in elastic composite
  materials}, Internat. J. Engrg. Sci. \textbf{31} (1993), no.~12, 1669--1689,
  \doi{10.1016/0020-7225(93)90082-6}.

\bibitem{MR2759829}
H.~Brezis, \emph{Functional analysis, {S}obolev spaces and partial differential
  equations}, Universitext, Springer, New York, 2011.

\bibitem{clouet2009milne}
J.~F. Clou{\"e}t and R.~Sentis, \emph{Milne problem for non-grey radiative
  transfer}, Kinetic and Related Models \textbf{2} (2009), no.~2, 345--362.

\bibitem{Compton}
K.~T. Compton, \emph{{LXXIII}. {S}ome properties of resonance radiation and
  excited atoms}, The London, Edinburgh, and Dublin Philosophical Magazine and
  Journal of Science \textbf{45} (1923), no.~268, 750--760.

\bibitem{MR3412332}
R.~M.~S. da~Gama, \emph{Existence, uniqueness and construction of the solution
  of the energy transfer problem in a rigid and non-convex blackbody with
  temperature-dependent thermal conductivity}, Z. Angew. Math. Phys.
  \textbf{66} (2015), no.~5, 2921--2939, \doi{10.1007/s00033-015-0549-3}.

\bibitem{Dematte2023}
E.~Demattè, \emph{On a kinetic equation describing the behavior of a gas
  interacting mainly with radiation}, Journal of Statistical Physics
  \textbf{190} (2023), no.~7, 124,
  \doi{https://doi.org/10.1007/s10955-023-03128-0 10.1007/s10955-023-03128-0}.

\bibitem{DeVorePetrova}
R.~DeVore and G.~Petrova, \emph{The averaging lemma}, J. Amer. Math. Soc.
  \textbf{14} (2001), no.~2, 279--296, \doi{10.1090/S0894-0347-00-00359-3}.

\bibitem{DIPERNA1991271}
R.~J. Diperna, P.-L. Lions, and Y.~Meyer, \emph{${L}^p$ regularity of velocity
  averages}, Annales de l'Institut Henri Poincaré C, Analyse non linéaire
  \textbf{8} (1991), no.~3, 271--287,
  \doi{https://doi.org/10.1016/S0294-1449(16)30264-5}.

\bibitem{MR2478911}
P.-\'{E}. Druet, \emph{Weak solutions to a stationary heat equation with
  nonlocal radiation boundary condition and right-hand side in {$L^p\ (p\geq
  1)$}}, Math. Methods Appl. Sci. \textbf{32} (2009), no.~2, 135--166,
  \doi{10.1002/mma.1029}.

\bibitem{evans}
L.~C. Evans, \emph{Partial {D}ifferential {E}quations}, Graduate studies in
  mathematics, American Mathematical Society, 2010.

\bibitem{federer}
H.~Federer, \emph{Geometric {M}easure {T}heory}, Grundlehren der mathematischen
  Wissenschaften, Springer Berlin Heidelberg, 1969.

\bibitem{G}
G.~Gallavotti, \emph{Statistical mechanics}, Texts and Monographs in Physics,
  Springer-Verlag, Berlin, 1999, \doi{10.1007/978-3-662-03952-6}, A short
  treatise.

\bibitem{ghattassi2022diffusive}
M.~Ghattassi, X.~Huo, and N.~Masmoudi, \emph{On the diffusive limits of
  radiative heat transfer system {I}: well-prepared initial and boundary
  conditions}, SIAM Journal on Mathematical Analysis \textbf{54} (2022), no.~5,
  5335--5387.

\bibitem{ghattassi2023diffusive1}
\bysame, \emph{Diffusive limits of the steady state radiative heat transfer
  system: Boundary layers}, Journal de Math{\'e}matiques Pures et
  Appliqu{\'e}es \textbf{175} (2023), 181--215.

\bibitem{ghattassi2023diffusive2}
\bysame, \emph{Diffusive limits of the steady state radiative heat transfer
  system: Curvature effects}, arXiv preprint arXiv:2305.17661 (2023).

\bibitem{ghattassi2023stability}
\bysame, \emph{Stability of the nonlinear {M}ilne {P}roblem for radiative heat
  transfer system}, Archive for Rational Mechanics and Analysis \textbf{247}
  (2023), no.~5, 102.

\bibitem{MR3797032}
M.~Ghattassi, J.~R. Roche, and D.~Schmitt, \emph{Existence and uniqueness of a
  transient state for the coupled radiative-conductive heat transfer problem},
  Comput. Math. Appl. \textbf{75} (2018), no.~11, 3918--3928,
  \doi{10.1016/j.camwa.2018.03.002}.

\bibitem{golse1987milne}
F.~Golse, \emph{The {M}ilne problem for the radiative transfer equations (with
  frequency dependence)}, Transactions of the American Mathematical Society
  \textbf{303} (1987), no.~1, 125--143.

\bibitem{GOLSE1988110}
F.~Golse, P.-L. Lions, B.~Perthame, and R.~Sentis, \emph{Regularity of the
  moments of the solution of a {T}ransport {E}quation}, Journal of Functional
  Analysis \textbf{76} (1988), no.~1, 110--125,
  \doi{https://doi.org/10.1016/0022-1236(88)90051-1}.

\bibitem{golse2008radiative}
F.~Golse and F.~Salvarani, \emph{Radiative transfer equations and {R}osseland
  approximation in gray matter}, Waves and Stability in Continuous Media, World
  Scientific, 2008, pp.~321--326.

\bibitem{MR3686005}
Y.~Guo and L.~Wu, \emph{Geometric correction in diffusive limit of neutron
  transport equation in 2{D} convex domains}, Arch. Ration. Mech. Anal.
  \textbf{226} (2017), no.~1, 321--403, \doi{10.1007/s00205-017-1135-y}.

\bibitem{Holstein}
T.~Holstein, \emph{Imprisonment of resonance radiation in gases}, Physical
  Review \textbf{72} (1947), no.~12, 1212.

\bibitem{refId0}
P.-E. Jabin and B.~Perthame, \emph{Regularity in kinetic formulations via
  averaging lemmas}, ESAIM: COCV \textbf{8} (2002), 761--774,
  \doi{10.1051/cocv:2002033}.

\bibitem{2109.10071}
J.~W. Jang and J.~J.~L. Vel{\'a}zquez, \emph{L{TE} and non-{LTE} solutions in
  gases interacting with radiation}, J. Stat. Phys. \textbf{186} (2022), no.~3,
  Paper No. 47, 62, \doi{10.1007/s10955-022-02888-5}.

\bibitem{zbMATH07686011}
\bysame, \emph{Kinetic models for semiflexible polymers in a half-plane}, Pure
  Appl. Anal. \textbf{5} (2023), no.~1, 145--212 (English),
  \doi{10.2140/paa.2023.5.145}.

\bibitem{jang2023temperature}
\bysame, \emph{On the temperature distribution of a body heated by radiation},
  2023, \arxiv{2211.11289}.

\bibitem{PhysRev.42.823}
C.~Kenty, \emph{On {R}adiation {D}iffusion and the {R}apidity of {E}scape of
  {R}esonance {R}adiation from a {G}as}, Phys. Rev. \textbf{42} (1932),
  823--842, \doi{10.1103/PhysRev.42.823}.

\bibitem{MR1608072}
M.~T. Laitinen and T.~Tiihonen, \emph{Integro-differential equation modelling
  heat transfer in conducting, radiating and semitransparent materials}, Math.
  Methods Appl. Sci. \textbf{21} (1998), no.~5, 375--392,
  \doi{10.1002/(SICI)1099-1476(19980325)21:5<375::AID-MMA953>3.0.CO;2-U}.

\bibitem{MR1866555}
\bysame, \emph{Conductive-radiative heat transfer in grey materials}, Quart.
  Appl. Math. \textbf{59} (2001), no.~4, 737--768, \doi{10.1090/qam/1866555}.

\bibitem{LS}
J.~L. Lebowitz and H.~Spohn, \emph{Transport properties of the {L}orentz gas:
  {F}ourier's law}, J. Statist. Phys. \textbf{19} (1978), no.~6, 633--654,
  \doi{10.1007/BF01011774}.

\bibitem{mercier1987application}
B.~Mercier, \emph{Application of accretive operators theory to the radiative
  transfer equations}, SIAM Journal on Mathematical Analysis \textbf{18}
  (1987), no.~2, 393--408.

\bibitem{mihalas2013foundations}
D.~Mihalas and B.~W. Mihalas, \emph{Foundations of radiation hydrodynamics},
  Courier Corporation, 2013.

\bibitem{Milne}
E.~A. Milne, \emph{The diffusion of imprisoned radiation through a gas}, J.
  London Math. Soc. \textbf{1} (1926), no.~1, 40--51,
  \doi{10.1112/jlms/s1-1.1.40}.

\bibitem{MR3356368}
A.~Nota, \emph{Diffusive limit for the random {L}orentz gas}, From particle
  systems to partial differential equations. {II}, Springer Proc. Math. Stat.,
  vol. 129, Springer, Cham, 2015, pp.~273--292,
  \doi{10.1007/978-3-319-16637-7\_10}.

\bibitem{Nouri2}
A.~Nouri, \emph{Stationary states of a gas in a radiation field from a kinetic
  point of view}, Ann. Fac. Sci. Toulouse Math. (6) \textbf{10} (2001), no.~2,
  361--390.

\bibitem{Nouri1}
\bysame, \emph{The evolution of a gas in a radiation field from a kinetic point
  of view}, J. Statist. Phys. \textbf{106} (2002), no.~3-4, 589--622,
  \doi{10.1023/A:1013758305917}.

\bibitem{oxenius}
J.~Oxenius, \emph{Kinetic theory of particles and photons}, Springer Series in
  Electrophysics, vol.~20, Springer-Verlag, Berlin, 1986,
  \doi{10.1007/978-3-642-70728-5}, Theoretical foundations of non-LTE plasma
  spectroscopy.

\bibitem{MR2076780}
M.~M. Porzio and \'{O}. L\'{o}pez-Pouso, \emph{Application of accretive
  operators theory to evolutive combined conduction, convection and radiation},
  Rev. Mat. Iberoamericana \textbf{20} (2004), no.~1, 257--275,
  \doi{10.4171/RMI/388}.

\bibitem{RSM}
A.~Rossani, G.~Spiga, and R.~Monaco, \emph{Kinetic approach for two-level atoms
  interacting with monochromatic photons}, Mechanics Research Communications
  \textbf{24} (1997), no.~3, 237--242.

\bibitem{rutten1995radiative}
R.~J. Rutten, \emph{Radiative transfer in stellar atmospheres}, Utrecht
  University lecture notes, 8th edition, 2003,
  \href{https://robrutten.nl/rrweb/rjr-pubs/2003rtsa.book.....R.pdf}{https://robrutten.nl/rrweb/rjr-pubs/2003rtsa.book.....R.pdf}.

\bibitem{MR0906921}
R.~Sentis, \emph{Half space problems for frequency dependent transport
  equations. {A}pplication to the {R}osseland approximation of the radiative
  transfer equations}, Proceedings of the conference on mathematical methods
  applied to kinetic equations ({P}aris, 1985), vol.~16, 1987, pp.~653--697,
  \doi{10.1080/00411458708204309}.

\bibitem{spiegel1957smoothing}
E.~A. Spiegel, \emph{The smoothing of temperature fluctuations by radiative
  transfer}, The Astrophysical Journal \textbf{126} (1957), 202.

\bibitem{Spohn1978}
H.~Spohn, \emph{The {L}orentz process converges to a random flight process},
  Communications in Mathematical Physics \textbf{60} (1978), no.~3, 277–290,
  \doi{https://doi.org/10.1007/BF01612893 10.1007/BF01612893}.

\bibitem{MR1471600}
T.~Tiihonen, \emph{A nonlocal problem arising from heat radiation on non-convex
  surfaces}, European J. Appl. Math. \textbf{8} (1997), no.~4, 403--416,
  \doi{10.1017/S0956792597003185}.

\bibitem{Westdickenberg}
M.~Westdickenberg, \emph{Some new velocity averaging results}, SIAM Journal on
  Mathematical Analysis \textbf{33} (2002), no.~5, 1007--1032,
  \arxiv{https://doi.org/10.1137/S0036141000380760},
  \doi{10.1137/S0036141000380760}.

\bibitem{MR3324149}
L.~Wu and Y.~Guo, \emph{Geometric correction for diffusive expansion of steady
  neutron transport equation}, Comm. Math. Phys. \textbf{336} (2015), no.~3,
  1473--1553, \doi{10.1007/s00220-015-2315-y}.

\end{thebibliography}

 \end{document}